\bmdefine{\be}{e}
\bmdefine{\bg}{g}
\bmdefine{\bk}{k}
\bmdefine{\bm}{m}
\bmdefine{\bp}{p}
\bmdefine{\bs}{s}
\bmdefine{\bt}{t}
\bmdefine{\bw}{w}
\newtheorem{thm}{THEOREM}[section]
\newtheorem{lem}[thm]{LEMMA}
\newtheorem{cor}[thm]{COROLLARY}
\newtheorem{rem}{REMARK}
\newcommand{\Romannum}[1]{\uppercase\expandafter{\romannumeral#1}}
\begin{document}
\title{Arithmetical properties of real numbers related to beta-expansions
\footnote{2010 Mathematics Subject Classification : primary 11J91; secondary 11K16, 11J72}
}
\author{Kaneko Hajime 
\footnote{Keywords and phrases: algebraic independence, power series, beta expansion, Pisot numbers, 
Salem numbers.}}
\date{}
\maketitle
\begin{abstract}
The main purpose of this paper is to 
study the arithmetical properties of values \(\sum_{m=0}^{\infty} \beta^{-w(m)}\), 
where \(\beta\) is a fixed Pisot or Salem number and \(w(m)\) (\(m=0,1,\ldots\)) are distinct sequences of 
nonnegative integers with \(w(m+1)>w(m)\) for any sufficiently large \(m\). 
We first introduce criteria for the algebraic independence of such values. 
Our criteria are applicable to certain sequences \(w(m)\) (\(m=0,1,\ldots\)) with \(\lim_{m\to\infty}w(m+1)/w(m)=1.\) 
For example, we prove that two numbers 
\[\sum_{m=1}^{\infty}\beta^{-\lfloor \varphi(1,0;m)\rfloor}, 
\sum_{m=3}^{\infty}\beta^{-\lfloor \varphi(0,1;m)\rfloor}\] 
are algebraically independent, where 
\(\varphi(1,0;m)=m^{\log m}\) and \(\varphi(0,1;m)=m^{\log\log m}\). \par
Moreover, we also give criteria for linear independence of real numbers. 
Our criteria are applicable to the values 
\(\sum_{m=0}^{\infty}\beta^{-\lfloor m^\rho\rfloor}\), where 
\(\beta\) is a Pisot or Salem number and \(\rho\) is a real number greater than 1. 
\end{abstract}
\section{Introduction}
Throughout this paper, we denote the set of nonnegative integers (resp. positive integers) 
by \(\mathbb{N}\) (resp. \(\mathbb{Z}^{+}\)). We write the integral and fractional parts of a real number 
\(x\) by \(\lfloor x \rfloor\) and \(\{x\}\), respectively. Moreover, \(\lceil x \rceil\) 
is the minimal integer not less than \(x\). We use the Vinogradov symbols \(\gg\) and \(\ll\), 
as well as the Landau symbols \(O,o\) with their regular meanings. 
Finally, \(f\sim g\) means that the ratio \(f/g\) tends to \(1\) \par
In what follows, we investigate the arithmetical properties of the values of power series \(f(X)\) 
at algebraic points. For simplicity, we first consider the case where \(f(X)\) has the form 
\[f(X)=\sum_{m=0}^{\infty} X^{w(m)},\]
where \((w(m))_{m=0}^{\infty}\) is a sequence of nonnegative integers satisfying 
\(w(m)<w(m+1)\) for any sufficiently large \(m\). We call \(f(X)\) a gap series  
if 
\[\lim_{m\to\infty}\frac{w(m+1)}{w(m)}=\infty.\]
We say that \(f(X)\) is a lacunary series if 
\[\liminf_{m\to\infty}\frac{w(m+1)}{w(m)}>1.\]
Note that if \(f(X)\) is a lacunary series, then there exists a positive real number \(\delta\) such that 
\[w(m)>(1+\delta)^m\]
for any sufficiently large \(m\). \par
In the rest of this secction, suppose that \(\alpha\) is an algebraic number with \(0<|\alpha|<1\). 
In paper \cite{bug2}, Bugeaud posed a problem on the transcendence of the values of 
power series \(f(X)\) as follows: If \((w(m))_{m=0}^{\infty}\) increases sufficiently rapidly, then 
\(\sum_{m=0}^{\infty}\alpha^{w(m)}\) is transcendental. \par
Corvaja and Zannier \cite{cor} showed that if \(f(X)=\sum_{m=0}^{\infty} X^{w(m)}\) is a lacunary 
series, then \(\sum_{m=0}^{\infty}\alpha^{w(m)}\) is transcendental. For instance, let \(x, y\) be 
real numbers with \(x>0\) and \(y>1\). Then two numbers 
\[\sum_{m=0}^{\infty}\alpha^{\lfloor x (m!)\rfloor}
, 
\sum_{m=0}^{\infty}\alpha^{\lfloor y^m \rfloor}
\]
are transcendental. \par
Adamczewski \cite{ada1} improved the result above in the case of \(\alpha=\beta^{-1}\), where 
\(\beta\) is a Pisot or Salem number. Recall that Pisot numbers are algebraic integers greater than \(1\) 
whose conjugates except themselves have absolute values less than \(1\). 
Note that any rational integers greater than \(1\) are Pisot numbers. Salem numbers are 
algebraic integers greater than \(1\) such that the conjugates except themselves have moduli 
less than \(1\) and that there exists at least one conjugate with modulus \(1\). 
Adamczewski \cite{ada1} showed that if 
\[\liminf_{m\to\infty}\frac{w(m+1)}{w(m)}>1,\]
then \(\sum_{m=0}^{\infty}\beta^{-w(m)}\) is transcendental for any Pisot or Salem number \(\beta\). 
\par
We now introduce known results on the algebraic independence of certain lacunary series at 
fixed algebraic points. First we consider the case where \(f(X)\) is a gap series. Durand \cite{dur} 
showed that if \(\alpha\) is a real algebraic number with \(0<\alpha<1\), then the 
continuum set 
\begin{align}
\left\{\left. \sum_{m=0}^{\infty} \alpha^{\lfloor x (m!)\rfloor}\right| x\in\mathbb{R}, 
x>0\right\}
\label{eqn:int1}
\end{align}
is algebraically independent. Moreover, Shiokawa \cite{shi} gave a criterion for the algebraic independence 
of the values of certain gap series. Using his criterion, we deduce for general algebraic number 
\(\alpha\) with \(0<|\alpha|<1\) that the set (\ref{eqn:int1}) is algebraically independent. \par
Next, we consider the case where \(f(X)\) is not a gap series. 
Using Mahler's method for algebraic independence, Nishioka \cite{nis} proved that the set 
\begin{align*}
\left\{\left. \sum_{m=0}^{\infty} \alpha^{k^m}\right| k=2,3,\ldots\right\}
\end{align*}
is algebraically independent. Moreover, Tanaka \cite{tan} showed that if 
positive real numbers \(w_1,\ldots,w_m\) are linearly independent over \(\mathbb{Q}\), then 
the set 
\begin{align*}
\left\{\left. \sum_{m=0}^{\infty} \alpha^{\lfloor w_i k^m\rfloor}\right| i=1,\ldots,m, \ k=2,3,\ldots\right\}
\end{align*}
is algebraically independent. \par
On the other hand, it is generally difficult to study algebraic independence 
in the case where \(f(X)\) is not lacunary. 
In Section 2 we review known results on the criteria for transcendence of 
the value \(\sum_{m=0}^{\infty}\beta^{-w(m)}\), where \(\beta\) is a Pisot or Salem number and 
\((w(m))_{m=0}^{\infty}\) is a certain sequence of nonnegative integers with 
\begin{align*}
\lim_{m\to\infty}\frac{w(m+1)}{w(m)}=1.
%\label{eqn:int2}
\end{align*}
%The main purpose of this paper is to study the algebraic independence and linear independence of 
%the values \(\sum_{m=0}^{\infty}\beta^{-w(m)}\) with (\ref{eqn:int2}). We introduce our main results in 
%Section 3. We deduce our results, using criteria given in Section 4. We prove main results in Section 5. 
%Moreover, we show our criteria in Section 6. 
%In Section 3 we prove our main results on the algebraic independence and linear independence 
%of the values \(\sum_{m=0}^{\infty}\beta^{-w(m)}\), where \(w(m)\) (\(m=0,1,\ldots\)) are certain sequences 
%satisfying (\ref{eqn:int2}). 
In Section 3 we give the main results on the algebraic independence of real numbers applicable to 
\[\sum_{m=1}^{\infty}\beta^{-\lfloor m^{\log m}\rfloor}, 
\sum_{m=3}^{\infty}\beta^{-\lfloor m^{\log \log m}\rfloor}.\]
In the same section we also investigate the linear independence of real numbers applicable to 
\(\sum_{m=0}^{\infty} \beta^{-\lfloor m^{\rho}\rfloor}\) for a real number \(\rho>1\). 
%We verify our main results, using criteria for algebraic independence and linear independence in Section 4. 
The main criteria for algebraic independence and linear independence, which are used to prove the main results, 
are denoted in Section 4.  
For the proof of the algebraic 
independence and linear independence, we need no functional equation because our criteria are flexible. 
We prove the main results in Section 5. Moreover, we show the criteria in Section 6. 
\section{Transcendental results related to the numbers of nonzero digits}
In this section we review criteria for the transcendence of the value \(\sum_{n=0}^{\infty} t_n \beta^{-n}\), 
where \((t_n)_{n=0}^{\infty}\) is a bounded sequence of nonnegative integers and \(\beta\) is a Pisot or 
Salem number. First we consider the case where \(\beta=b\) is an integer greater than \(1\). 
We denote the base-\(b\) expansion of a real number \(\eta\) by  
\[\eta=\sum_{n=0}^{\infty}s_n^{(b)}(\eta) b^{-n},\]
where \(s_0^{(b)}(\eta)=\lfloor \eta\rfloor\) 
and \(s_n^{(b)}(\eta)\in \{0,1,\ldots,b-1\}\) for any positive integer \(n\). 
We may assume that \(s_n^{(b)}(\eta)\leq b-2\) for infinitely many \(n\)'s. 
For any positive integer \(N\), put 
\[\lambda_b(\eta;N):=\mbox{Card}\{n\in\mathbb{N}\mid n< N, s_n^{(b)}(\eta)\ne 0\},\]
where Card denotes the cardinality. \par
Borel \cite{bor} conjectured for each integral base \(b\geq 2\) that any algebraic irrational number is 
normal in base-\(b\), which is still an open problem. For any real number \(\rho>1\), put 
\[\gamma(\rho;X):=\sum_{m=0}^{\infty}X^{\lfloor m^{\rho}\rfloor}.\]
If Borel's conjecture is true, then \(\gamma(\rho;b^{-1})\) is transcendental because 
\(\gamma(\rho;b^{-1})\) is a non-normal irrational number in base-\(b\). 
However, the transcendence of such values is 
not known except the case of \(\rho=2\). If \(\rho=2\), then 
Duverney, Nishioka, Nishioka, Shiokawa \cite{duv} and Bertrand \cite{ber} independently proved for 
any algebraic number \(\alpha\) with \(0<|\alpha|<1\) that 
\(\gamma(2;\alpha)\) is transcendental. \par
Bailey, Borwein, Crandall, and Pomerance \cite{bai} gave a criterion for the transcendence of 
real numbers, using lower bounds for the numbers of nonzero digits in the binary expansions of 
algebraic irrational numbers. Let \(\eta\) be an algebraic irrational number with degree \(D\). 
Bailey, Borwein, Crandall, and Pomerance \cite{bai} showed that there exist positive constants 
\(C_1(\eta)\) and \(C_2(\eta)\), depending only on \(\eta\), satisfying 
\begin{align*}
\lambda_2(\eta;N)\geq C_1(\eta) N^{1/D}
%\label{eqn:tra1}
\end{align*}
for any integer \(N\) with \(N\geq C_2(\eta)\). Note that \(C_1(\eta)\) is effectively computable but 
\(C_2(\eta)\) is not. 
For any integral base \(b\geq 2\), 
Adamczewski, Faverjon \cite{ada2} and Bugeaud \cite{bug1} gave effective versions 
of lower bounds for \(\lambda_b(\eta;N)\) 
as follows: There exist effectively computable positive constants \(C_3(b,\eta)\) 
and \(C_4(b,\eta)\), depending only on \(b\) and \(\eta\), satisfying 
\begin{align}
\lambda_b(\eta;N)\geq C_3(b,\eta) N^{1/D}
\label{eqn:tra2}
\end{align}
for any integer \(N\) with \(N\geq C_4(b,\eta)\). Using (\ref{eqn:tra2}), we obtain for any real number 
\(\rho>1\) that \(\gamma(\rho;b^{-1})\) is not an algebraic number of degree less than \(\rho\). In fact, 
\(\gamma(\rho;b^{-1})\) is an irrational number satisfying 
\[\lambda_b\bigl(\gamma(\rho;b^{-1});N\bigr)\sim N^{1/{\rho}}\]
as \(N\) tends to infinity. Thus, (\ref{eqn:tra2}) does not hold if \(D<\rho\). \par
By (\ref{eqn:tra2}), we also deduce a criterion for the transcendence of real numbers as follows: 
Let \(\eta\) be a positive irrational number. Suppose for any real positive real number \(\varepsilon\) 
that 
\begin{align}
\liminf_{N\to\infty}\frac{\lambda_b(\eta;N)}{N^{\varepsilon}}=0.
\label{eqn:tra3}
\end{align}
Then \(\eta\) is a transcendental number. 
Note that the criterion above was essentially obtained by 
Bailey, Borwein, Crandall, and Pomerance \cite{bai}. 
%In the case where \(\alpha=b^{-1}\) with integer \(b\geq 2\), the result of transcendence 
%of the values of lacunary series 
%by Corvaja and Zannier 
%we mentioned in Section 1 follows from the criterion above. 
%In fact, if \(\eta=\sum_{m=0}^{\infty} b^{-w(m)}\), where \(\sum_{m=0}^{\infty} X^{w(m)}\) is a lacunary 
%series, then we have 
Note that if \(\sum_{m=0}^{\infty} X^{w(m)}\) is lacunary, then \(\eta=\sum_{m=0}^{\infty} b^{-w(m)}\) 
satisfies (\ref{eqn:tra3}) by 
\[\lambda_b(\eta;N)=O(\log N).\]
We give another example  of transcendental numbers. For any real numbers \(y>0\) and 
\(R\geq 1\), we put 
\[\varphi(y;R):=\exp\left((\log R)^{1+y}\right)=R^{(\log R)^y}.\]
Moreover, we set 
\[\xi(y;X):=1+\sum_{m=1}^{\infty} X^{\lfloor \varphi(y;m)\rfloor}.\]
Note that \(\xi(y;X)\) is not lacunary by 
\[\lim_{m\to\infty}\frac{\varphi(y;m+1)}{\varphi(y;m)}=1.\]
%It is easily seen that (\ref{eqn:tra3}) holds. Thus, 
We get that \(\eta:=\xi(y;b^{-1})\) is transcendental for any 
integer \(b\geq 2\) because \(\eta\) satisfies (\ref{eqn:tra3}). \par 
In what follows, we consider the case where 
\(\beta\) is a general Pisot or Salem number. 
We introduce results in \cite{kan3} related to the \(\beta\)-expansion of algebraic numbers. 
For any formal power series \(f(X)=\sum_{n=0}^{\infty} t_n X^n\), we put 
\[S(f):=\{n\in\mathbb{N}\mid t_n\ne 0\}. \]
Moreover, for any nonempty set \(\mathcal{A}\) of nonnegative integers, we set 
\[\lambda(\mathcal{A};N):=\mbox{Card}(\mathcal{A}\cap [0,N)).\]
We denote the degree of a field extension \(L/K\) by \([L:K]\). 
\begin{thm}[\cite{kan3}]
Let \(A\) be a positive integer and 
let \(f(X)=\sum_{n=0}^{\infty}t_n X^n\) be a power series with integral coefficients. 
Assume that \(0\leq t_n\leq A\) for any nonnegative integer \(n\) and that there exist 
infinitely many \(n\)'s satisfying \(t_n\ne 0\). Let \(\beta\) be a Pisot or Salem number. 
Suppose that \(\eta=f(\beta^{-1})\) is an algebraic number with 
\([\mathbb{Q}(\beta,\eta):\mathbb{Q}(\beta)]=D\). Then there exist effectively computable positive 
constants \(C_5(A,\beta,\eta)\) and \(C_6(A,\beta,\eta)\), depending only on \(A,\beta\) and \(\eta\) 
satisfying 
\[\lambda\bigl(S(f);N\bigr)\geq C_5(A,\beta,\eta)\left(\frac{N}{\log N}\right)^{1/D}\]
for any integer \(N\) with \(N\geq C_6(A,\beta,\eta)\). 
\label{thm:2-1}
\end{thm}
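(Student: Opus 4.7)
I would combine the Pisot/Salem conjugate control with a truncation argument and, to reach the sharp polynomial exponent, an effective subspace theorem. Let $k = \lambda(S(f); N)$ and enumerate the nonzero positions in $S(f) \cap [0, N)$ as $n_1 < n_2 < \cdots < n_k$, with $n_{k+1} \geq N$ the next nonzero position. Setting $P(X) := \sum_{j=1}^{k} t_{n_j} X^{n_j}$ and $Q(X) := X^{n_k} P(X^{-1}) \in \mathbb{Z}[X]$, the polynomial $Q$ has degree at most $n_k$ and at most $k$ nonzero coefficients, each bounded by $A$. The tail of the geometric series gives
\[
|\beta^{n_k}\eta - Q(\beta)| \leq \frac{A}{1-\beta^{-1}} \beta^{-(n_{k+1}-n_k)},
\]
so $Q(\beta)/\beta^{n_k}$ is an extremely good approximation to $\eta$ whenever the gap $n_{k+1}-n_k$ is large.

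Next I would exploit the Pisot/Salem hypothesis to control Galois conjugates. For any embedding $\sigma: \mathbb{Q}(\beta) \hookrightarrow \mathbb{C}$ with $\sigma(\beta) \neq \beta$, one has $|\sigma(\beta)| \leq 1$ and hence $|Q(\sigma(\beta))| \leq kA$. Taking $G(Y) \in \mathbb{Z}[\beta][Y]$ to be a scalar multiple of the minimal polynomial of $\eta$ over $\mathbb{Q}(\beta)$, of degree $D$, the identity $G(\eta)=0$ and Taylor expansion at $\eta$ yield $|G(P(\beta^{-1}))| \ll |\eta - P(\beta^{-1})| \ll \beta^{-n_{k+1}}$. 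Then the algebraic integer $\Xi := \beta^{Dn_k} G(P(\beta^{-1})) \in \mathbb{Z}[\beta]$ satisfies $|\Xi| \ll \beta^{Dn_k - n_{k+1}}$, while $|\sigma(\Xi)| \ll k^D$ at every non-identity embedding --- the $k^D$ coming from the $O(k^D)$ monomials produced on expanding $G(P(\cdot))$. The norm integrality $|N_{\mathbb{Q}(\beta)/\mathbb{Q}}(\Xi)| \geq 1$, valid whenever $\Xi \neq 0$, then furnishes a Liouville-type lower bound on $|\Xi|$ which, combined with the upper bound above, produces the gap estimate $n_{k+1} \leq Dn_k + O(\log k)$.

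Iterating this gap estimate only yields the weaker logarithmic lower bound $k \gg \log N$; to reach the claimed polynomial bound $k \gg (N/\log N)^{1/D}$, the Liouville step must be upgraded to an effective form of the subspace theorem, in the spirit of the Adamczewski--Faverjon and Bugeaud arguments for the integer base case. Concretely, I would apply the subspace theorem (or Schlickewei's $p$-adic variant) to a system of linear forms in $\eta, \beta, \beta^{-n_1}, \ldots, \beta^{-n_k}$; the exceptional subspace is then ruled out by the minimality of $G$ over $\mathbb{Q}(\beta)$, giving the polynomial bound after optimising the auxiliary parameters. The hard part will be precisely this subspace theorem step, which demands careful height control together with delicate handling of the Salem case --- where a conjugate of $\beta$ lies exactly on the unit circle and therefore contributes no decay to the product over places --- and the $\log N$ correction in the exponent reflects the loss incurred when balancing the $O(k^D)$ monomial count against the conjugate contributions in that optimisation.
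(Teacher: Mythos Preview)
This theorem is not proved in the present paper; it is quoted from \cite{kan3}. The method used there --- and partially visible in Section~6 of this paper, where an analogous counting is carried out for Theorem~\ref{thm:cri1} --- is an elementary refinement of the Bailey--Borwein--Crandall--Pomerance technique, not an application of the subspace theorem.

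Your Liouville/norm step is correct in spirit and, as you yourself observe, yields only the gap bound $n_{k+1}\le Dn_k+O(\log k)$, hence only $k\gg\log N$. The gap in your plan is the proposed upgrade via an ``effective subspace theorem.'' First, the subspace theorem is ineffective: even the quantitative versions of Evertse and Schlickewei bound only the \emph{number} of large solutions, not their size, so they cannot produce the effectively computable constants $C_5,C_6$ promised in the statement. Second, the proof in \cite{kan3} needs no such machinery at all.

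The idea you are missing is a \emph{global count} rather than an iterated gap estimate. For each integer $R$ one sets
\[
Y_R:=\beta^R\eta-\sum_{n\le R}t_n\beta^{R-n}\in\mathbb{Z}[\beta]\cdot\eta+\mathbb{Z}[\beta],
\]
and, after multiplying through by the minimal relation $G(\eta)=0$, obtains an algebraic integer whose conjugates are polynomially bounded. One then counts
\[
y_N:=\mathrm{Card}\{R<N: Y_R\ge 1/\beta\}.
\]
An upper bound $y_N\ll \lambda(S(f);N)^D$ comes from summing the tails $\sum_R|Y_R|$ and observing that the total mass is controlled by the number of monomials in $G(P(\beta^{-1}))$, exactly as in your $k^D$ estimate. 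A lower bound $y_N\gg N/\log N$ comes from your norm argument applied \emph{once per block}: since $|Y_R|$ is either $0$ or at least $R^{-C}$, the recursion $Y_{R-1}=\beta^{-1}Y_R+(\text{nonneg.})$ forces $Y_R\ge 1/\beta$ within every window of length $O(\log N)$. Comparing the two bounds gives $\lambda(S(f);N)^D\gg N/\log N$, which is the claim. Everything here is effective; no Diophantine approximation beyond the product formula for $\mathbb{Q}(\beta)$ is used.
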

In the rest of this section, let \(\beta\) be a Pisot or Salem number. 
Using Theorem \ref{thm:2-1}, we obtain for any real number \(\rho>1\) that 
\[\left[\mathbb{Q}\bigl(\gamma(\rho;\beta^{-1}),\beta\bigr):\mathbb{Q}(\beta)\right]\geq \lceil \rho\rceil\]
by 
\begin{align}
\lambda(S(\gamma(\rho;X));N)\sim N^{1/\rho}
\label{eqn:tra4}
\end{align} as \(N\) tends to infinity. \par
Note that Theorem \ref{thm:2-1} is applicable to the study of the nonzero digits in the \(\beta\)-expansions 
of algebraic numbers. We recall the definition of \(\beta\)-expansion defined by R\'{e}nyi \cite{ren} in 1957. 
Let \(T_{\beta}:[0,1)\to[0,1)\) be the \(\beta\)-transformation defined by 
\(T_{\beta}(x)=\{\beta x\}\) for \(x\in[0,1)\). Then the \(\beta\)-expansion of a real number 
\(\eta\in[0,1)\) is denoted as 
\[\eta=\sum_{n=1}^{\infty} s_n^{(\beta)}(\eta)\beta^{-n},\]
where \(s_n^{(\beta)}(\eta)=\lfloor \beta T_{\beta}^{n-1}(\eta)\rfloor \) for any \(n\geq 1\). 
Note that \(0\leq s_n^{(\beta)}(\eta)\leq \lfloor\beta\rfloor\) for any \(n\geq 1\). Put 
\[\lambda_{\beta}(\eta;N):=\mbox{Card}\{n\in\mathbb{Z}^{+}, n\leq N, s_n^{(\beta)}(\eta)\ne 0\}\]
for any positive integer \(N\). Applying Theorem \ref{thm:2-1} with \(B=\lfloor\beta\rfloor\), 
we deduce that if \(\eta\in[0,1)\) is an algebraic number 
with \([\mathbb{Q}(\beta,\eta):\mathbb{Q}(\beta)]=D\), then 
\[\lambda_{\beta}(\eta;N)\gg\left(\frac{N}{\log N}\right)^{1/D}\]
for any sufficiently large integer \(N\). \par
Using Theorem \ref{thm:2-1}, we also deduce a criterion for the transcendence of real numbers 
as follows: 
Let \(f(X)\) be a power series whose coefficients are bounded nonnegative integers. 
Suppose that \(f(X)\) is not a polynomial and that 
\[\liminf_{m\to\infty}\frac{\lambda_{\beta}(S(f);N)}{N^{\varepsilon}}=0\]
for any positive real number \(\varepsilon\). Then \(f(\beta^{-1})\) is transcendental. Note that 
the criterion above was already obtained in \cite{kan2} 
and that the criterion is applicable even if the representation 
\(\sum_{n=0}^{\infty} t_n \beta^{-n}\) does not coincide with the \(\beta\)-expansion 
of \(f(\beta^{-1})\). In the same way as the case where 
\(\beta=b\geq 2\) is an integer, we obtain for any positive real number \(y\) that 
\(\xi(y;\beta^{-1})\) is transcendental. \par
%In Section 3, we investigate results on algebraic independence of the numbers 
%\(\xi(y;\beta^{-1})\) for fixed \(\beta\). Moreover, we consider linear independence of 
%the numbers \(\gamma(y;\beta^{-1})\). \par 
In the end of this section we introduce a corollary of Theorem \ref{thm:2-1}, which we need to 
prove our criteria for linear independence. 
\begin{cor}
Let \(A\) be a positive integer and \(f(X)\) a nonpolynomial power series whose coefficients are 
bounded nonnegative integers. 
Assume that there exists a positive real number \(\delta\) satisfying 
\[\lambda\bigl(S(f);R\bigr)<R^{-\delta+1/A}\]
for infinitely many integer \(R\geq 0\). 
Then, for any Pisot or Salem number \(\beta\), we have 
\[\left[\mathbb{Q}\bigl(f(\beta^{-1}),\beta\bigr):\mathbb{Q}(\beta)\right]\geq A+1.\]
\label{cor:tra1}
\end{cor}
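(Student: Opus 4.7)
The plan is to argue by contradiction, reducing the corollary to a direct application of Theorem \ref{thm:2-1}. Suppose that $D := [\mathbb{Q}(f(\beta^{-1}),\beta):\mathbb{Q}(\beta)]$ satisfies $D \leq A$. If $f(\beta^{-1})$ is transcendental over $\mathbb{Q}(\beta)$ then this degree is infinite and there is nothing to prove, so I may assume that $\eta := f(\beta^{-1})$ is algebraic. Under the standing assumptions on $f$ (nonpolynomial, with bounded nonnegative integer coefficients) Theorem \ref{thm:2-1} then applies to $A$, $f$, $\beta$, and $\eta$.

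From that theorem I would extract the lower bound
\[
\lambda\bigl(S(f);N\bigr) \geq C_5(A,\beta,\eta)\left(\frac{N}{\log N}\right)^{1/D} \geq C_5(A,\beta,\eta)\left(\frac{N}{\log N}\right)^{1/A}
\]
valid for every integer $N \geq C_6(A,\beta,\eta)$, where the second inequality uses $D \leq A$ together with $N/\log N \geq 1$ for $N$ large. The hypothesis of the corollary produces infinitely many integers $R$ with $\lambda(S(f);R) < R^{1/A - \delta}$, so I would select such an $R$ also exceeding $C_6(A,\beta,\eta)$, combine the two estimates, and rearrange to obtain
\[
C_5(A,\beta,\eta)\, R^{\delta} < (\log R)^{1/A}.
\]

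Since $R^{\delta}$ eventually dominates any fixed power of $\log R$, this inequality fails for all sufficiently large $R$, contradicting the existence of arbitrarily large $R$ satisfying it. The contradiction forces $D \geq A+1$, which is the claim. There is no serious obstacle here beyond feeding the hypothesis of the corollary into Theorem \ref{thm:2-1}; the only points requiring care are separating the trivial transcendental case at the outset and noting that the constants $C_5, C_6$ depend only on $A, \beta, \eta$, so that comparing the two bounds along a single infinite subsequence of $R$'s is legitimate.
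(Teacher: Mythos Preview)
Your argument is correct and is precisely the intended deduction: the paper states this result as an immediate corollary of Theorem~\ref{thm:2-1} without further proof, and your contradiction argument is the natural way to spell it out. One small notational point worth cleaning up: the symbol \(A\) in Theorem~\ref{thm:2-1} denotes the bound on the coefficients of \(f\), whereas in the corollary \(A\) is an independent degree parameter and the coefficients are bounded by some unspecified constant; when you invoke Theorem~\ref{thm:2-1} you should introduce a separate letter (say \(B\)) for that coefficient bound, so that the constants read \(C_5(B,\beta,\eta)\), \(C_6(B,\beta,\eta)\)---the rest of the argument is unaffected.
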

\section{Main results}
\subsection{Results on algebraic independence}
We use the same notation as Section 2. 
\begin{thm}
Let \(\beta\) be a Pisot or Salem number. Then the continuum set 
\begin{align}
\{\xi(y;\beta^{-1})\mid y\in\mathbb{R}, \ y\geq 1\}
\label{eqn:mai1}
\end{align}
is algebraically independent. 
\label{thm:mai1}
\end{thm}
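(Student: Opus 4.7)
The plan is to reduce Theorem \ref{thm:mai1} to the algebraic-independence criterion that will be stated in Section 4. Since algebraic independence of a set is equivalent to algebraic independence of each of its finite subfamilies, it suffices to fix $1\leq y_1<y_2<\cdots<y_k$ and prove that the values $\eta_i:=\xi(y_i;\beta^{-1})$, $i=1,\ldots,k$, are algebraically independent over $\mathbb{Q}$, equivalently over $\mathbb{Q}(\beta)$ since $\beta$ is algebraic over $\mathbb{Q}$.

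The first preparatory step is to record the density and internal gap behavior of each individual exponent sequence. Inverting $\varphi(y;m)\leq N$ gives $m\leq \exp\bigl((\log N)^{1/(1+y)}\bigr)$, whence
\[\lambda\bigl(S(\xi(y;X));N\bigr)\sim \exp\bigl((\log N)^{1/(1+y)}\bigr)\qquad(N\to\infty).\]
For every $y\geq 1$ this is sub-polynomial, and on its own it already forces each single $\eta_i$ to be transcendental over $\mathbb{Q}(\beta)$ through Corollary \ref{cor:tra1}. In parallel, differentiating $\log\varphi(y;m)=(\log m)^{1+y}$ gives the gap estimate
\[\varphi(y;m+1)-\varphi(y;m)\sim(\log m)^y\,\varphi(y;m)/m,\]
so each single sequence, although satisfying $\varphi(y;m+1)/\varphi(y;m)\to 1$, is internally extremely lacunary.

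The decisive step is to separate the sequences for different $y_i$'s. Because $\log\varphi(y;R)=(\log R)^{1+y}$ is strictly increasing and highly sensitive in $y$ once $R$ is large, the ratios $\lambda(S(\xi(y_i;X));R)/\lambda(S(\xi(y_{i+1};X));R)$ tend to infinity, so the supports are asymptotically disjoint at every scale. Combining this with the intra-sequence lacunarity above, I would prove the following sparsity property for combinations: for any nonzero polynomial $P\in\mathbb{Z}[\beta][X_1,\ldots,X_k]$ (obtained from a hypothetical algebraic relation after clearing denominators), the power series
\[G(X):=P\bigl(\xi(y_1;X),\ldots,\xi(y_k;X)\bigr)\]
is a nonzero nonpolynomial power series whose support satisfies $\lambda(S(G);R)<R^{-\delta+1/A}$ for infinitely many $R$, for every prescribed $A\geq 1$ and some $\delta=\delta(A)>0$. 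The Section 4 criterion (a version of Corollary \ref{cor:tra1} that allows coefficients in $\mathbb{Z}[\beta]$ rather than bounded nonnegative integers) then forces $G(\beta^{-1})$ to be transcendental over $\mathbb{Q}(\beta)$, contradicting $G(\beta^{-1})=0$.

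The principal obstacle I anticipate is precisely this sparsity control on $G$. Expanding a monomial $\prod_i\xi(y_i;X)^{e_i}$ produces exponents of the form $\sum_j\lfloor\varphi(y_{i_j};m_j)\rfloor$, and one must rule out systematic coincidences both among such sums arising from a fixed multi-index $\bar e$ and among contributions coming from distinct $\bar e$ in $P$. The strict convex-monotone dependence of $(\log R)^{1+y}$ on $y$ should prevent such cancellations for all sufficiently large exponents, so the dominant contribution to $S(G)$ should come from a single monomial and inherit its sub-polynomial density; nevertheless, propagating this through $P$ while maintaining the quantitative bound $R^{-\delta+1/A}$ uniformly in $R$ is the delicate computation on which the argument ultimately rests.
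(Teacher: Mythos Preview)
Your plan diverges from the paper's approach and has a genuine gap at the step you flag as ``the Section 4 criterion (a version of Corollary \ref{cor:tra1} that allows coefficients in $\mathbb{Z}[\beta]$ rather than bounded nonnegative integers)''. No such version exists, and in fact the extension is false already over $\mathbb{Z}$. If $q(X)\in\mathbb{Z}[X]$ is the reciprocal of the minimal polynomial of $\beta$, then $q(\beta^{-1})=0$; for any $h(X)$ with bounded nonnegative coefficients and arbitrarily sparse support (say $h(X)=\sum_{m\ge0}X^{2^m}$), the product $G(X)=q(X)h(X)$ is a nonzero power series with bounded integer coefficients, support satisfying $\lambda(S(G);R)=O(\log R)$, yet $G(\beta^{-1})=0$. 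Thus sparsity of $S(G)$ together with $G\ne 0$ in $\mathbb{Z}[[X]]$ cannot force $G(\beta^{-1})\ne 0$ once negative coefficients are allowed. The nonnegativity hypothesis in Theorem \ref{thm:2-1} and Corollary \ref{cor:tra1} is not a technicality; it is what prevents exactly this kind of algebraic cancellation at $X=\beta^{-1}$.

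The paper does not try to control $S(G)$ at all. Instead it applies Theorem \ref{thm:cri1}, whose proof keeps the monomials $\underline{\xi}^{\bk}$ separate and compares, for the truncated tails $Y_R$, the contribution of the lexicographically largest $\bk=\bg$ against all the others. Positivity enters only through the individual $\rho(\bg;m)\ge 0$, and the second assumption of Theorem \ref{thm:cri1} (verified via Lemma \ref{lem5:1}) guarantees enough indices $m$ with $\rho(\bg;m)>0$ to produce many $R$ with $Y_R\ge 1/\beta$; this is then played off against an upper bound for such $R$ coming from the sparsity of the $\lambda_i$. In short, your reduction to a single sparse series $G$ discards precisely the sign structure that the argument needs, and the black box you would have to invoke in its place does not hold.
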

Note that if \(\beta=b\) is an integer greater than 1, then the algebraic independence of (\ref{eqn:mai1}) 
was proved in \cite{kan1}. However, the algebraic independence of the set 
\[\{\xi(y;b^{-1})\mid y\in\mathbb{R}, \ y>0\}\]
is unknown. \par
On the other hand, considering the algebraic independence of two values, we obtain 
more detailed results. 
%For convenience, let 
%\[\log^{+}x:=\log \max\{2,x\}\]
%for any real number \(x\). 
Set 
\[{\Theta}:=\{(y,z)\in\mathbb{R}^2\mid y>0\mbox{, or }y=0\mbox{ and }z>0\}.\]
Moreover, for any real number \(R\geq 3\) and \((y,z)\in {\Theta}\), we put 
\begin{align*}
\varphi(y,z;R)& :=\exp \left((\log R)^{1+y} (\log\log R)^z\right)\\
&=R^{(\log R)^y (\log\log R)^z}
\end{align*}
and 
\begin{align*}
\xi(y,z;X):=1+\sum_{m=3}^{\infty} X^{\lfloor \varphi(y,z;m)\rfloor}.
\end{align*}
\begin{thm}
Let \((y_1,z_1)\) and \((y_2,z_2)\) be distinct elements in \({\Theta}\). Then the two values 
\(\xi(y_1,z_1;\beta^{-1})\) and \(\xi(y_2,z_2;\beta^{-1})\) are algebraically independent 
for any Pisot or Salem number \(\beta\). 
\label{thm:mai2}
\end{thm}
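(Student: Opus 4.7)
The plan is to apply the algebraic independence criterion from Section 4 to the pair of power series $f_i(X) := \xi(y_i, z_i; X)$ for $i = 1, 2$. In the spirit of Theorem \ref{thm:2-1} and Corollary \ref{cor:tra1}, that criterion will require quantitative sparseness of each support $S(f_i)$ together with a quantitative separation between the two sequences of exponents.

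The first step is an asymptotic analysis of $\psi_i(R) := \lambda(S(f_i); R)$, which equals the number of integers $m \geq 3$ with $\varphi(y_i, z_i; m) \leq R$. Inverting the inequality $(\log m)^{1+y_i}(\log\log m)^{z_i} \leq \log R$ yields, for $y_i > 0$,
\[\psi_i(R) = \exp\left((\log R)^{1/(1+y_i)}(\log\log R)^{-z_i/(1+y_i)}(1 + o(1))\right),\]
with a similar but larger expression when $y_i = 0$. In every case $\psi_i(R) = R^{o(1)}$, which both reconfirms the transcendence of each $f_i(\beta^{-1})$ via Section 2 and supplies the sparsity margin needed downstream.

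The second step is to exhibit the separation between the two sequences. Since $(y_1, z_1) \neq (y_2, z_2)$, one growth function eventually dominates the other: if $y_1 \neq y_2$, assume without loss of generality $y_1 > y_2$, so that $\log\varphi(y_1, z_1; m)$ exceeds $\log\varphi(y_2, z_2; m)$ by a factor that is a positive power of $\log m$ (up to $(\log\log m)$-factors); if $y_1 = y_2$ then $z_1 \neq z_2$, and the ratio of logarithms is $(\log\log m)^{z_1 - z_2}$. Combined with the sparsity from Step 1, this should ensure that for any nonzero polynomial $P(X, Y)$ with algebraic coefficients and bounded degree, the formal power series $P(f_1(X), f_2(X))$ still has support with $\lambda(\cdot; R) = R^{o(1)}$. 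An application of Corollary \ref{cor:tra1} (for a suitably chosen parameter $A$) then forces the degree $[\mathbb{Q}(f_1(\beta^{-1}), f_2(\beta^{-1}), \beta):\mathbb{Q}(\beta)]$ to exceed any prescribed value, contradicting the existence of a nontrivial polynomial relation between $f_1(\beta^{-1})$ and $f_2(\beta^{-1})$.

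The main obstacle is the borderline case $y_1 = y_2$ with $z_1 \neq z_2$, where the two growth functions differ only by the very slowly varying factor $(\log\log m)^{z_1 - z_2}$. A careful analysis is needed to show that, after passing from $\varphi(y_i, z_i; m)$ to the counting functions $\psi_i(R)$ and then to the supports of the monomials $f_1^a f_2^b$, this mild separation on logarithmic scales still produces enough control on near-collisions to feed the Section 4 criterion. Verifying this estimate is what I expect to be the delicate technical core of the argument.
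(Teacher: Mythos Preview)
Your Step 3 contains a genuine gap. You propose to apply Corollary~\ref{cor:tra1} to the power series $P(f_1(X),f_2(X))$ for a hypothetical algebraic relation $P$, but Corollary~\ref{cor:tra1} (like Theorem~\ref{thm:2-1}) requires the coefficients of the power series to be \emph{nonnegative} integers. A nontrivial relation $P(\xi_1,\xi_2)=0$ forces cancellation, so $P(f_1,f_2)$ will have coefficients of both signs, and the corollary simply does not apply. Even setting this aside, the conclusion of Corollary~\ref{cor:tra1} is a lower bound on $[\mathbb{Q}(f(\beta^{-1}),\beta):\mathbb{Q}(\beta)]$; if $P(\xi_1,\xi_2)=0$ then the value in question is $0\in\mathbb{Q}(\beta)$ and there is nothing to contradict. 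The whole point of the Section~4 machinery is to circumvent exactly this obstacle.

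The paper's proof applies Theorem~\ref{thm:cri2} directly, with $a(R)=\varphi(y_1,z_1;R)$ and $u(R)=\varphi(y_2,z_2;R)$ ordered so that $a$ grows more slowly. Your Steps~1 and~2 are morally the verification of its hypotheses, but the precise separation condition needed is~(\ref{eqn:cri3}): $\log b(R)/\log v(R)\to\infty$, where $b,v$ are the inverse functions. In the delicate case $y_1=y_2=:y$, $z_1<z_2$, the paper first shows $\log\log v(R)/\log\log b(R)\to 1$ and then uses the identity $(\log\log v(R))^{z_2}/(\log\log b(R))^{z_1}=(\log b(R)/\log v(R))^{1+y}$ to conclude. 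Theorem~\ref{thm:cri2} is in turn proved via Theorem~\ref{thm:cri1}, whose proof does not reduce to Corollary~\ref{cor:tra1} but instead analyses the algebraic-integer tail sums $Y_R$ attached to a putative relation and derives contradictory upper and lower bounds for the count of $R<N$ with $Y_R\ge 1/\beta$. That counting argument, not a sparsity bound on $S(P(f_1,f_2))$, is the engine; your proposal would need to invoke it rather than Corollary~\ref{cor:tra1}.
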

Considering the case of \(z_1=z_2=0\) in Theorem \ref{thm:mai2}, we get the following: 
\begin{cor}
Let \(y_1\) and \(y_2\) be distinct positive real numbers. Then the two values 
\(\xi(y_1;\beta^{-1})\) and \(\xi(y_2;\beta^{-1})\) are algebraically independent 
for any Pisot or Salem number \(\beta\). 
\label{cor:mai1}
\end{cor}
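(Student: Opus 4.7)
The plan is to deduce Corollary \ref{cor:mai1} as a direct specialization of Theorem \ref{thm:mai2}, taking $z_1 = z_2 = 0$. For every real number $y > 0$ the pair $(y, 0)$ belongs to $\Theta$, and evaluating the two-parameter function at $z = 0$ gives
\[
\varphi(y, 0; R) = R^{(\log R)^y (\log\log R)^0} = R^{(\log R)^y} = \varphi(y; R)
\]
for every $R \geq 3$, so the exponents in the series $\xi(y, 0; X)$ and $\xi(y; X)$ agree term by term on the overlapping range.

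Next, I would reconcile the two definitions of $\xi$, which differ only in their starting index: $\xi(y; X)$ sums from $m = 1$ whereas $\xi(y, 0; X)$ sums from $m = 3$. Consequently
\[
\xi(y; \beta^{-1}) - \xi(y, 0; \beta^{-1}) = \beta^{-\lfloor \varphi(y; 1) \rfloor} + \beta^{-\lfloor \varphi(y; 2) \rfloor},
\]
which lies in $\mathbb{Q}(\beta)$ and is therefore algebraic over $\mathbb{Q}$. Given distinct positive reals $y_1 \neq y_2$, the pairs $(y_1, 0)$ and $(y_2, 0)$ are distinct elements of $\Theta$, so Theorem \ref{thm:mai2} yields the algebraic independence over $\mathbb{Q}$ of $\xi(y_1, 0; \beta^{-1})$ and $\xi(y_2, 0; \beta^{-1})$. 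Since translating each entry of an algebraically independent pair by an algebraic number preserves algebraic independence over $\mathbb{Q}$, the same holds for $\xi(y_1; \beta^{-1})$ and $\xi(y_2; \beta^{-1})$.

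There is no genuine obstacle to overcome: all of the analytic and Diophantine work is absorbed into Theorem \ref{thm:mai2}, and the deduction above is pure book-keeping. The only point that demands a moment's care is the mismatched starting index between the two definitions of $\xi$, which is why I isolate it as a separate step; once that correction term has been identified as algebraic, the corollary follows immediately.
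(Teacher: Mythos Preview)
Your proposal is correct and follows exactly the route the paper indicates: the corollary is obtained by taking $z_1=z_2=0$ in Theorem~\ref{thm:mai2}. You are in fact slightly more careful than the paper, which simply announces the specialization without commenting on the mismatch in starting indices between $\xi(y;X)$ and $\xi(y,0;X)$; your observation that the discrepancy lies in $\mathbb{Q}(\beta)$ cleanly disposes of that point.
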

In the case where \(\beta=b\) is an integer greater than 1, the algebraic independence of the two values 
\(\xi(y_1;b^{-1})\) and \(\xi(y_2;b^{-1})\) was obtained in \cite{kan1}. \par 
Applying Theorem \ref{thm:mai2} with \((y_1,z_1)=(1,0)\) and \((y_2,z_2)=(0,1)\), we deduce the following: 
\begin{cor}
For any Pisot or Salem number \(\beta\) the two values 
\[\sum_{m=1}^{\infty} \beta^{-\lfloor m^{\log m}\rfloor}, 
\sum_{m=3}^{\infty} \beta^{-\lfloor m^{\log\log m}\rfloor}\]
are algebraically independent. 
\label{cor:mai2}
\end{cor}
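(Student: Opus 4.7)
The plan is to deduce this corollary directly from Theorem \ref{thm:mai2} by taking \((y_1, z_1) = (1, 0)\) and \((y_2, z_2) = (0, 1)\). First I would check that both of these pairs lie in \({\Theta}\) and are distinct: the first satisfies \(y_1 = 1 > 0\), while the second satisfies \(y_2 = 0\) and \(z_2 = 1 > 0\). Substituting into the definition of \(\varphi\), I would note that \(\varphi(1, 0; m) = m^{\log m}\) and \(\varphi(0, 1; m) = m^{\log\log m}\). Thus Theorem \ref{thm:mai2} immediately asserts that \(\xi(1, 0; \beta^{-1})\) and \(\xi(0, 1; \beta^{-1})\) are algebraically independent over \(\mathbb{Q}\).

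The remaining task is to relate these two values to the sums appearing in the statement of the corollary. By the definition of \(\xi(y, z; X)\),
\[\xi(1, 0; \beta^{-1}) = 1 + \sum_{m=3}^{\infty}\beta^{-\lfloor m^{\log m}\rfloor}, \qquad \xi(0, 1; \beta^{-1}) = 1 + \sum_{m=3}^{\infty}\beta^{-\lfloor m^{\log\log m}\rfloor},\]
so the second sum in the corollary equals \(\xi(0, 1; \beta^{-1}) - 1\). For the first sum, I would separate out the finitely many contributions from the indices \(m = 1\) and \(m = 2\) (which are specific elements of \(\mathbb{Q}(\beta)\), since \(\lfloor 1^{\log 1}\rfloor\) and \(\lfloor 2^{\log 2}\rfloor\) are particular nonnegative integers) and thereby express the first sum as \(\xi(1, 0; \beta^{-1})\) plus an element of \(\mathbb{Q}(\beta)\).

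To conclude, I would invoke the standard fact that algebraic independence of a pair of complex numbers over \(\mathbb{Q}\) is equivalent to algebraic independence over \(\overline{\mathbb{Q}}\), and is therefore preserved under translation by algebraic numbers: any hypothetical nonzero polynomial in \(\mathbb{Q}[X, Y]\) vanishing at the translated pair would produce, by substitution, a nonzero polynomial in \(\overline{\mathbb{Q}}[X, Y]\) vanishing at the original pair. Applying this principle to \((\xi(1, 0; \beta^{-1}), \xi(0, 1; \beta^{-1}))\) together with the \(\mathbb{Q}(\beta)\)-shifts identified above yields the desired algebraic independence of the two sums in Corollary \ref{cor:mai2}. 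The entire argument is a routine bookkeeping step once Theorem \ref{thm:mai2} is available, so there is no substantive obstacle; the only mild point of care is correctly accounting for the small-index terms omitted from the definition of \(\xi\).
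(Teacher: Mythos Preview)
Your proposal is correct and follows essentially the same approach as the paper: the paper simply states that Corollary \ref{cor:mai2} follows from Theorem \ref{thm:mai2} applied with \((y_1,z_1)=(1,0)\) and \((y_2,z_2)=(0,1)\), and you have merely filled in the routine bookkeeping (the membership in \(\Theta\), the identification of \(\varphi\), and the preservation of algebraic independence under translation by elements of \(\mathbb{Q}(\beta)\)) that the paper leaves implicit.
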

In the last of this subsection, we introduce the algebraic independence of the values of \(\xi(y,z;X)\) and 
lacunary series. 
\begin{thm}
Let \((y,z)\in \Xi\) and let \(x\) be a real number greater than 1. Then, 
\(\xi(y,z,\beta^{-1})\) and \(\sum_{m=0}^{\infty} \beta^{-\lfloor x^m\rfloor}\) are 
algebraically independent for any Pisot or Salem number \(\beta\). 
\label{thm:abc}
\end{thm}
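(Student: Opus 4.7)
The plan is to reduce the theorem to the algebraic independence criterion announced for Section 4, which is tailored to pairing a sparsely supported non-lacunary series against a lacunary one without requiring any functional equation. Write $\xi := \xi(y,z;\beta^{-1})$ and $\zeta := \sum_{m=0}^{\infty} \beta^{-\lfloor x^{m}\rfloor}$, and denote by $S_{1} = \{\lfloor \varphi(y,z;m)\rfloor : m\geq 3\}$ and $S_{2} = \{\lfloor x^{m}\rfloor : m\geq 0\}$ the respective supports.

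As a warm-up I would verify that each of $\xi$ and $\zeta$ is transcendental. The lacunarity of $S_{2}$ together with Adamczewski's theorem cited in the introduction gives the transcendence of $\zeta$. For $\xi$ one argues exactly as in Section 2 for $\xi(y;\beta^{-1})$: since $\log\varphi(y,z;m) = (\log m)^{1+y}(\log\log m)^{z}$ grows superpolynomially in $\log m$, the counting function $\lambda(S_{1};N)$ is smaller than $N^{\varepsilon}$ for every $\varepsilon > 0$ and all large $N$, so Corollary \ref{cor:tra1} forces $\xi$ to be transcendental. This rules out any univariate algebraic relation for either value.

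The main step is to rule out a nontrivial bivariate relation. Suppose for contradiction that $P(\xi,\zeta)=0$ for some nonzero polynomial $P$ with coefficients in $\overline{\mathbb{Q}}$. Expanding any monomial $\xi^{i}\zeta^{j}$ in powers of $\beta^{-1}$ produces a power series whose support lies in the iterated sumset $i\cdot S_{1} + j\cdot S_{2}$; by the drastic separation of scales ($S_{2}$ is geometric while $S_{1}$ is only iterated-logarithmically dense), each such sumset can be partitioned along the windows $[\lfloor x^{n}\rfloor,\lfloor x^{n+1}\rfloor)$, and in each window only boundedly many contributions of the $\zeta^{j}$ factors appear. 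The vanishing of $P(\xi,\zeta)$ then translates, window by window, into the requirement that a carefully chosen auxiliary power series built from $\xi$ has support sparser than Corollary \ref{cor:tra1} permits, yielding a contradiction.

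The hard part will be the rigorous packaging of this window-by-window comparison, which is precisely what the criterion of Section 4 is designed to encapsulate. Verifying its hypotheses should reduce to: (i) the lacunarity of $S_{2}$; (ii) the density estimate for $\lambda(S_{1};N)$ already used above; and (iii) a comparison of scales, namely $\log\varphi(y,z;m) \ll m\log x$ for $m$ large, which is immediate since $\varphi(y,z;m)$ grows subexponentially in $m$ while $x^{m}$ grows geometrically. Once these conditions are checked, the criterion directly delivers the algebraic independence of $\xi(y,z;\beta^{-1})$ and $\sum_{m=0}^{\infty}\beta^{-\lfloor x^{m}\rfloor}$.
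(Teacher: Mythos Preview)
Your overall plan—invoke the Section~4 criterion with the first series coming from $\varphi(y,z;\cdot)$ and the second from $m\mapsto x^m$—is exactly the paper's route. The paper applies Theorem~\ref{thm:cri2} with $a(R)=\varphi(y,z;R)$ and $u(R)=x^R$: the assumptions on $a(R)$ are those already verified in the proof of Theorem~\ref{thm:mai2}, the first assumption on $u(R)$ is immediate since $u(R+1)/u(R)=x$, and the only remaining check is (\ref{eqn:cri3}), carried out via the explicit inverse $v(R)=(\log R)/\log x$ together with the asymptotic shape of $b(R)$ from (\ref{eqn5:7}).

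Where your proposal falls short is in the packaging of the hypotheses. Your condition (iii), $\log\varphi(y,z;m)\ll m\log x$, is too weak: what Theorem~\ref{thm:cri2} demands is $\log b(R)/\log v(R)\to\infty$, a statement about the \emph{inverse} functions, and a mere bound $\log a(m)=O(\log u(m))$ does not yield it. For instance, $a(R)=\exp(R^{1/2})$ and $u(R)=x^R$ satisfy your (i)--(iii) (here $b(R)=(\log R)^2$, so $\lambda(S_1;N)=o(N^{\varepsilon})$ for every $\varepsilon>0$), yet $\log b(R)/\log v(R)\to 2$. What rules such examples out is the derivative bound $(\log a(R))'<R^{-1+\varepsilon}$ in Theorem~\ref{thm:cri2}, and this is not implied by your (ii). So drop the heuristic ``window-by-window'' paragraph---that is the internal mechanism of Theorem~\ref{thm:cri1}, not its application---and instead verify the four hypotheses of Theorem~\ref{thm:cri2} directly, as the paper does in two lines.
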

\subsection{Results on linear independence}
%In this subsection we introduce results on linear independence results of the values \(\gamma(\rho;\beta^{-1})\). 
Let \(\mathcal{F}\) be the set of nonpolynomial power series 
\(g(X)\) satisfying the following three assumptions: 
\begin{enumerate}
\item The coefficients of \(g(X)\) are bounded 
nonnegative integers. 
\item For an arbitrary positive real number \(\varepsilon\), we have 
\[\lambda(S(g);R)=o(R^{\varepsilon})\]
as \(R\) tends to infinity. 
%\[\lim_{R\to\infty}\log_R \lambda\bigl( S(g);R \bigr)=0.\]
\item There exists a positive constant \(C\) such that 
\[[R, CR]\cap S(g)\ne \emptyset\]
for any sufficiently large \(R\). 
\end{enumerate}
In order to state our results, we give a lemma on the zeros of certain polynomials. 
For any positive integer \(k\), put 
\[G_k(X):=(1-X)^k+(k-1)X-1.\]
\begin{lem}
Suppose that \(k\geq 3\). Then the following holds: \\
\(\mathrm{1)}\) There exists a unique zero \(\sigma_k\) of \(G_k(X)\) 
on the interval \((0,1)\). \\
\(\mathrm{2)}\) Let \(x\) be a real number with \(0<x<1\). Then 
\(G_k(x)<0\) (resp. \(G_k(x)>0\)) if and only if \(x<\sigma_k\) (resp. \(x>\sigma_k\)). \\
\(\mathrm{3)}\) \((\sigma_k)_{k=3}^{\infty}\) is strictly decreasing. 
\label{lem:mai1}
\end{lem}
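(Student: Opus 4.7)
The plan is to analyze $G_k$ on $[0,1]$ by elementary calculus. Direct evaluation gives $G_k(0) = 0$ and $G_k(1) = k - 2 > 0$ for $k \geq 3$, while differentiation yields
\[G_k'(X) = -k(1-X)^{k-1} + (k-1), \qquad G_k''(X) = k(k-1)(1-X)^{k-2}.\]
In particular $G_k''(X) > 0$ on $(0,1)$, so $G_k$ is strictly convex there; this convexity is what will drive both parts 1)--2) and the monotonicity in 3).

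For parts 1) and 2), I would observe that $G_k'(0) = -1 < 0$ and $G_k'(1) = k - 1 > 0$, while $G_k'$ is strictly increasing by the positivity of $G_k''$. Hence $G_k'$ has a unique zero $X_0 \in (0,1)$, and $G_k$ decreases strictly from $G_k(0) = 0$ to a unique minimum $G_k(X_0) < 0$, then increases strictly up to $G_k(1) = k-2 > 0$. The intermediate value theorem then produces a unique zero $\sigma_k \in (X_0, 1) \subset (0,1)$, proving 1). The sign assertion in 2) is immediate from the same picture: $G_k < 0$ on $(0,\sigma_k)$ and $G_k > 0$ on $(\sigma_k, 1)$.

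For 3), the cleanest approach is to evaluate $G_{k+1}$ at the known zero $\sigma_k$ of $G_k$ and apply 2). A short calculation gives
\[G_{k+1}(\sigma_k) - G_k(\sigma_k) = (1-\sigma_k)^{k+1} - (1-\sigma_k)^k + \sigma_k = \sigma_k\bigl(1 - (1-\sigma_k)^k\bigr),\]
which is strictly positive since $\sigma_k \in (0,1)$. As $G_k(\sigma_k) = 0$, this shows $G_{k+1}(\sigma_k) > 0$, and part 2) applied at index $k+1$ then forces $\sigma_k > \sigma_{k+1}$. No step in this plan looks to be a genuine obstacle; the only subtlety worth flagging is that uniqueness of the zero in $(0,1)$ cannot be obtained from monotonicity of $G_k$ itself (which fails) and must instead be extracted from strict convexity via the minimum at $X_0$.
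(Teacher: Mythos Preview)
Your proof is correct and follows essentially the same route as the paper: both establish the shape of $G_k$ on $(0,1)$ via the monotonicity of $G_k'$ (your strict convexity observation is the same thing), and both prove part 3) by evaluating the next polynomial at the current zero and invoking part 2). The only cosmetic difference is that the paper substitutes $(1-\sigma_{k-1})^{k-1}=1-(k-2)\sigma_{k-1}$ to simplify $G_k(\sigma_{k-1})$ to $(k-2)\sigma_{k-1}^2$, whereas you compute the difference $G_{k+1}(\sigma_k)-G_k(\sigma_k)=\sigma_k\bigl(1-(1-\sigma_k)^k\bigr)$ directly; these are equivalent calculations up to an index shift.
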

\begin{proof}
Observe that \(G_k'(X)=-k(1-X)^{k-1}+k-1\) is monotone increasing on the interval \((0,1)\) 
and that \(G_k'(X)\) has a unique zero \(\widetilde{\sigma_k}\) on \((0,1)\). 
Thus, \(G_k(X)\) is monotonically decreasing on \((0,\widetilde{\sigma_k}]\) and 
monotonically increasing on \((\widetilde{\sigma_k},1)\). Hence, the first and second statements of the 
lemma follow from \(G_k(0)=0\) and \(G_k(1)=k-2>0\). \par
Next, we assume that \(k\geq 4\). Using 
\[G_{k-1}(\sigma_{k-1})=(1-\sigma_{k-1})^{k-1}+(k-2)\sigma_{k-1}-1=0,\]
we get 
\[G_{k}(\sigma_{k-1})=(1-\sigma_{k-1})^k+(k-1)\sigma_{k-1}-1=(k-2)\sigma_{k-1}^2>0. \]
Hence, we obtain \(\sigma_k<\sigma_{k-1}\) by the second statement of the lemma. 
\end{proof}
\begin{thm}
Let \(A\) be a positive integer and \(\rho\) a real number. %If \(A\leq 3\), then suppose that \(x>A\). 
Suppose that %Otherwise, assume that \(x>1/\sigma_A\). 
\begin{align}
\left\{
\begin{array}{cc}
\rho> A & \mbox{ if }A\leq 3,\\
\rho> \sigma_A^{-1} & \mbox{ if }A\geq 4.
\end{array}
\right.
\label{eqn:mai2}
\end{align}
Then, for any \(g(X)\in \mathcal{F}\) and any 
Pisot or Salem number \(\beta\), the set 
\[\{\gamma(\rho;\beta^{-1})^{k_1} g(\beta^{-1})^{k_2}\mid k_1,k_2\in\mathbb{N}, k_1\leq A\}\]
is linearly independent over \(\mathbb{Q}(\beta)\). 
\label{thm:mai3}
\end{thm}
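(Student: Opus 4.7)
The proof should proceed by contradiction. I would assume the set is linearly dependent over \(\mathbb{Q}(\beta)\), so that
\[
\sum_{k_1=0}^{A}\sum_{k_2=0}^{B}c_{k_1,k_2}\,\gamma(\rho;\beta^{-1})^{k_1}g(\beta^{-1})^{k_2}=0
\]
for coefficients \(c_{k_1,k_2}\in\mathbb{Q}(\beta)\) not all zero, and I would choose \(B\) minimal. This identity translates into a formal power series relation: defining \(F(X)=\sum c_{k_1,k_2}\gamma(\rho;X)^{k_1}g(X)^{k_2}\), one has \(F(\beta^{-1})=0\). The minimality of \(B\), together with the transcendence of \(g(\beta^{-1})\) over \(\mathbb{Q}(\beta)\) (which is itself a consequence of Corollary \ref{cor:tra1} applied to \(g\in\mathcal{F}\), since every \(\varepsilon\) makes \(\lambda(S(g);R)<R^{-\delta+1/A}\) for all large \(R\)), prevents the relation from being trivial as a formal power series.

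Next I would establish the combinatorial sparsity estimates that feed into Corollary \ref{cor:tra1}. Standard sumset bounds based on \(\lambda(S(\gamma(\rho;X));N)\sim N^{1/\rho}\) give \(\lambda(S(\gamma^{k_1});N)\ll N^{k_1/\rho}\), while the sparsity hypothesis (2) on \(g\in\mathcal{F}\) yields \(\lambda(S(g^{k_2});N)=o(N^{\varepsilon})\) for every \(\varepsilon>0\), and the spacing hypothesis (3) ensures that relevant coefficients of \(F\) can always be located inside intervals \([R,CR]\). The decisive ingredient is then the arithmetic condition on \(\rho\) and \(A\): by Lemma \ref{lem:mai1}, the hypothesis \(\rho>\sigma_{A}^{-1}\) for \(A\geq 4\) is equivalent to \(G_{A}(1/\rho)<0\), i.e.\ \((1-1/\rho)^{A}+(A-1)/\rho<1\), whereas the stronger condition \(\rho>A\) for \(A\leq 3\) plays the same role while additionally ensuring that \(\gamma^{A}\) has sub-linear support density. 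In either case the inequality is tailored to compare the contribution of the top-degree term \(k_{1}=A\) against the sum of the lower-order terms \(k_{1}<A\), and it should allow the extraction of an auxiliary power series \(f^{*}(X)\) with bounded nonnegative integer coefficients whose support satisfies \(\lambda(S(f^{*});R)<R^{-\delta+1/A}\) for some \(\delta>0\) and infinitely many \(R\). Corollary \ref{cor:tra1} then forces \([\mathbb{Q}(f^{*}(\beta^{-1}),\beta):\mathbb{Q}(\beta)]\geq A+1\), contradicting the degree bound \(A\) furnished by the original relation.

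The main obstacle lies in converting the vanishing identity \(F(\beta^{-1})=0\), whose coefficients live in \(\mathbb{Q}(\beta)\) and can have either sign, into the auxiliary series \(f^{*}(X)\) with bounded nonnegative integer coefficients on which Corollary \ref{cor:tra1} can actually be invoked. This is presumably the purpose of the flexible linear-independence criterion to be established in Section 4: it should permit handling such mixed-sign combinations through support estimates alone, without requiring positivity or integrality in the combination itself. Once that criterion is in hand, the remaining work is the combinatorial accounting showing that precisely the condition \(G_{A}(1/\rho)<0\) is what makes the supports of the products \(\gamma^{k_1}g^{k_2}\) cooperate well enough to yield the required sparsity exponent \(-\delta+1/A\).
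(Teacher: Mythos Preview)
Your sketch is in the right spirit---reduce to the linear-independence criterion of Section~4---but it misses the single ingredient that actually makes the argument work. The paper does not attempt to extract an auxiliary series \(f^{*}\) and invoke Corollary~\ref{cor:tra1} directly; instead it simply verifies the four hypotheses of Theorem~\ref{thm:cri1} with \(r=2\), \(f_1=\gamma(\rho;X)\), \(f_2=g\). Hypotheses~1, 3, 4 are routine (and your discussion of \(\lambda(S(\gamma);R)\sim R^{1/\rho}\) and the properties of \(\mathcal{F}\) is relevant here; one also needs \(\rho>A\) even when \(A\geq 4\), which the paper checks by showing \(G_A(1/A)>0\)). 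The heart of the proof is hypothesis~2, and this is not a statement about \(\lambda(S(\gamma^{k_1});N)\) at all: it concerns the \emph{gap function}
\[
R-\theta\bigl(R;(1+k_1)S(f_1)\bigr),
\]
i.e.\ the distance from \(R\) down to the nearest element of the \((1+k_1)\)-fold Minkowski sumset \(\{m_1^{\rho}+\cdots+m_{1+k_1}^{\rho}\}\). The key technical lemma (Lemma~\ref{lem5:3}, inspired by Daniel~\cite{dan}) establishes by induction on \(k\) that
\[
R-\theta\bigl(R;kS(f_1)\bigr)=O\!\left(R^{(1-1/\rho)^{k}}\right),
\]
and it is precisely this exponent \((1-1/\rho)^{1+k_1}\), compared against the exponent \(1-k_1/\rho\) coming from \(\prod_h\lambda_h(R)^{k_h}\), that produces the quantity \(G_{1+k_1}(1/\rho)\). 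One then needs \(G_{1+k_1}(1/\rho)<0\) for every \(0\leq k_1\leq A-1\), not only for \(k_1=A\); the monotonicity of \((\sigma_k)\) in Lemma~\ref{lem:mai1} is what reduces this family of inequalities to the single assumption~\eqref{eqn:mai2}.

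Without the iterated-sumset gap estimate your proposal has no mechanism for verifying hypothesis~2 of Theorem~\ref{thm:cri1}, and the alternative route you outline---building \(f^{*}\) with nonnegative bounded coefficients from a mixed-sign relation---is exactly the difficulty that Theorem~\ref{thm:cri1} is designed to absorb internally, so deferring to it again is circular.
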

We give numerical examples of \(\sigma_n^{-1}\) (\(n\geq 4\)) as follows: 
\[\sigma_4^{-1}=5.278\ldots , \ 
\sigma_5^{-1}=8.942\ldots , \ \sigma_6^{-1}=13.60\ldots.\] 
\begin{cor}
Let \(A,\rho\) be as in Theorem \ref{thm:mai3}. \\
\(\mathrm{1)}\) For any real number \(y>1\) and any Pisot or Salem number \(\beta\), the set 
\[\left\{
\left.\gamma(\rho;\beta^{-1})^{k_1} \left(\sum_{m=0}^{\infty}\beta^{-\lfloor y^m\rfloor}\right)^{k_2} \ \right| \ 
k_1,k_2\in\mathbb{N}, k_1\leq A\right\}\]
is linearly independent over \(\mathbb{Q}(\beta)\). \\
\(\mathrm{2)}\) For any \((y,z)\in {\Theta}\) and any Pisot or Salem number \(\beta\), the set 
\[\{\gamma(\rho;\beta^{-1})^{k_1} \xi(y,z;\beta^{-1})^{k_2}\mid k_1,k_2\in\mathbb{N}, k_1\leq A\}\]
is linearly independent over \(\mathbb{Q}(\beta)\). 
\label{cor:mai3}
\end{cor}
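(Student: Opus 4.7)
The plan is to deduce both parts directly from Theorem \ref{thm:mai3} by supplying, in each case, a power series $g(X)\in\mathcal{F}$ whose value at $\beta^{-1}$ matches the intended second factor. For part~1 I take $g(X)=\sum_{m=0}^{\infty}X^{\lfloor y^{m}\rfloor}$; for part~2 I take $g(X)=\xi(y,z;X)$. Once each $g$ is shown to belong to $\mathcal{F}$, Theorem \ref{thm:mai3} immediately yields the linear independence claim over $\mathbb{Q}(\beta)$.

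Verifying the three defining assumptions of $\mathcal{F}$ is routine except for assumption (2) in part~2. For $g(X)=\sum_{m=0}^{\infty}X^{\lfloor y^{m}\rfloor}$ with $y>1$, the coefficients are eventually $0$ or $1$ (with at worst finitely many small collisions), so assumption (1) holds; $\lambda(S(g);R)\leq \log_{y} R+O(1)=O(\log R)=o(R^{\varepsilon})$ gives assumption (2); and for assumption (3), if $m$ is the smallest index with $\lfloor y^{m}\rfloor\geq R$, then $y^{m-1}<R$ forces $\lfloor y^{m}\rfloor\leq y^{m}<yR$, so $C=y$ works. For $g(X)=\xi(y,z;X)$, the sequence $\varphi(y,z;m)$ is eventually strictly increasing to infinity, so again the coefficients are bounded, yielding assumption (1); and a direct differentiation gives
\[
\log\varphi(y,z;m+1)-\log\varphi(y,z;m)\sim\frac{(1+y)(\log m)^{y}(\log\log m)^{z}}{m}\to 0,
\]
so $\varphi(y,z;m+1)/\varphi(y,z;m)\to 1$. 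Consequently, if $m^{*}$ is the minimal index with $\lfloor\varphi(y,z;m^{*})\rfloor\geq R$, then $\lfloor\varphi(y,z;m^{*})\rfloor\leq 2\varphi(y,z;m^{*}-1)<2R$ for $R$ large, verifying assumption (3) with $C=2$.

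The main technical point is assumption (2) for $\xi(y,z;X)$. Let $m_{R}$ denote the largest $m\geq 3$ with $\lfloor\varphi(y,z;m)\rfloor<R$, so that $(\log m_{R})^{1+y}(\log\log m_{R})^{z}\leq\log R$. If $y>0$, the factor $(\log m_{R})^{1+y}$ dominates any bounded power of $\log\log m_{R}$, and rearranging yields $\log m_{R}\ll(\log R)^{1/(1+y)+o(1)}$, which is $o(\log R)$ because $1/(1+y)<1$. If instead $y=0$ and $z>0$, then $(\log m_{R})(\log\log m_{R})^{z}\leq\log R$ together with $m_{R}\to\infty$ as $R\to\infty$ gives $\log m_{R}\leq (\log R)/(\log\log m_{R})^{z}=o(\log R)$. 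In either case $\lambda(S(g);R)\leq m_{R}+O(1)=o(R^{\varepsilon})$ for every $\varepsilon>0$, completing the verification and hence Corollary~\ref{cor:mai3}.
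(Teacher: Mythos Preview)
Your proof is correct and follows essentially the same route as the paper: both deduce the corollary from Theorem~\ref{thm:mai3} by verifying that $\sum_{m\geq 0}X^{\lfloor y^m\rfloor}$ and $\xi(y,z;X)$ lie in $\mathcal{F}$, with the third assumption for $\xi(y,z;X)$ coming from $\varphi(y,z;m+1)/\varphi(y,z;m)\to 1$. The only cosmetic difference is that for assumption~(2) on $\xi(y,z;X)$ the paper invokes the single observation $\varphi(y,z;R)/R^{M}\to\infty$ for every $M$, whereas you unwind the inverse function explicitly; these are equivalent, and your version (including the $+o(1)$ exponent to absorb a possibly negative~$z$) is fine.
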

Using the asymptotic behavior of the sequence \((\sigma_m)_{m=3}^{\infty}\), we deduce the following: 
\begin{cor}
Let \(\varepsilon\) be an arbitrary positive real number. Then there exists an effectively 
computable positive constant \(A_0(\varepsilon)\), depending only on \(\varepsilon\) 
satisfying the following: 
Let \(A\) be an integer with \(A\geq A_0(\varepsilon)\) and \(\rho\) a real number with 
\(\rho>(\varepsilon+1/2)A^2\). Then, for any \(g(X)\in \mathcal{F}\) and any 
Pisot or Salem number \(\beta\), the set 
\[\{\gamma(\rho;\beta^{-1})^{k_1} g(\beta^{-1})^{k_2}\mid k_1,k_2\in\mathbb{N}, k_1\leq A\}\]
is linearly independent over \(\mathbb{Q}(\beta)\). 
\label{cor:mai4}
\end{cor}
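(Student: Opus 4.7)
The plan is to reduce Corollary \ref{cor:mai4} directly to Theorem \ref{thm:mai3}. The hypothesis $\rho > (\varepsilon + 1/2)A^2$ forces the desired linear independence as soon as $\sigma_A^{-1} \leq (\varepsilon + 1/2)A^2$, and the latter is exactly the case $A \geq 4$ of assumption \eqref{eqn:mai2}. Hence it is enough to exhibit an effectively computable threshold $A_0(\varepsilon) \geq 4$ such that $\sigma_A^{-1} \leq (\varepsilon + 1/2)A^2$ holds for every $A \geq A_0(\varepsilon)$, which is a purely analytic statement about the sequence $(\sigma_k)_{k=3}^{\infty}$ from Lemma \ref{lem:mai1}.

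The main step is therefore an effective asymptotic expansion of $\sigma_k$. I would show that for every fixed $c \in (0,2)$ and every sufficiently large $k$ one has $\sigma_k > c/k^2$. To do this, evaluate $G_k$ at $x = c/k^2$ and expand the binomial:
\begin{align*}
G_k(c/k^2)
&= (1 - c/k^2)^k + (k-1)(c/k^2) - 1 \\
&= \left(1 - \frac{c}{k} + \frac{c^2}{2k^2} + O(k^{-3})\right) + \frac{c}{k} - \frac{c}{k^2} - 1 \\
&= \frac{c(c-2)}{2k^2} + O(k^{-3}).
\end{align*}
Because $c(c-2) < 0$, the right-hand side is strictly negative once $k$ exceeds a computable bound depending only on $c$, and Lemma \ref{lem:mai1}(2) then yields $c/k^2 < \sigma_k$, equivalently $\sigma_k^{-1} < k^2/c$. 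This in particular makes precise the asymptotic $\sigma_k \sim 2/k^2$ as $k \to \infty$.

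To conclude, given $\varepsilon > 0$ I would fix $c = 2/(1 + \varepsilon) \in (0,2)$, noting that $1/c = (1+\varepsilon)/2 < 1/2 + \varepsilon$. Combining with the previous step produces an explicit $A_0(\varepsilon) \geq 4$ for which $\sigma_A^{-1} < A^2/c < (\varepsilon + 1/2)A^2$ whenever $A \geq A_0(\varepsilon)$, whence Theorem \ref{thm:mai3} delivers the required linear independence. The only slightly delicate point is making every $O(k^{-3})$ remainder in the binomial expansion of $(1 - c/k^2)^k$ completely explicit, but this reduces to bounding the tail $\sum_{j \geq 3} \binom{k}{j}(c/k^2)^j$ by an elementary geometric estimate in $1/k$, so the threshold $A_0(\varepsilon)$ depends only on $\varepsilon$ and can be written down in closed form.
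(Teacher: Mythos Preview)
Your proposal is correct and follows essentially the same route as the paper: both reduce to Theorem~\ref{thm:mai3} via Lemma~\ref{lem:mai1}(2) by expanding $G_A(c/A^2)$ to second order and showing it is negative for large $A$. The only cosmetic difference is that the paper plugs in $c=(\tfrac12+\varepsilon)^{-1}$ directly (obtaining $G_A(c/A^2)=-\varepsilon(\tfrac12+\varepsilon)^{-2}A^{-2}+O(A^{-3})$), whereas you use the auxiliary value $c=2/(1+\varepsilon)$ and then the strict inequality $1/c<\tfrac12+\varepsilon$.
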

\section{Criteria for algebraic independence and linear independence}
Let \(k\) be a nonnegative integer and \(f(X)\in\mathbb{Z}[[X]]\backslash\mathbb{Z}[X]\). 
We denote the Minkowski sum of \(S(f)\) by 
\begin{align*}
k S(f):=
\left\{
\begin{array}{cc}
\{0\} & (k=0),\\
\{s_1+\cdots+s_k\mid s_1,\ldots,s_k\in S(f)\} & (k\geq 1).
\end{array}
\right.
\end{align*}
Moreover, for any \((k_1,\ldots,k_r)\in\mathbb{N}^r\) and 
\(f_1(X),\ldots,f_r(X)\in\mathbb{Z}[[X]]\backslash\mathbb{Z}[X]\), we set 
\begin{align*}
\sum_{h=1}^r k_h S(f_h):=\{s_1+\cdots+s_r\mid s_h\in k_h S(f_h) \mbox{ for }h=1,\ldots,r\}.
\end{align*}
\begin{rem}
\begin{rm}
Suppose that \(0\in S(f_i)\) for \(i=1,\ldots,r\). Then, for any 
\((k_1,\ldots,k_r)\in\mathbb{N}^r\) and \((k_1',\ldots,k_r')\in\mathbb{N}^r\) with 
\(k_i\geq k_i'\) for any \(i=1,\ldots,r\), we have 
\[\sum_{h=1}^r k_h S(f_h)\supset \sum_{h=1}^r k_h' S(f_h).\]
\label{rem:cri1}
\end{rm}
\end{rem}
%For any nonempty set \(\mathcal{A}\) of nonnegative integers, we put 
Let \(\mathcal{A}\) be a nonempty set of nonnegative integers and 
\(R\) a real number with \(R>\min \mathcal{A}\). Then we put 
\[\theta(R;\mathcal{A}):=\max\{n\in\mathcal{A}\mid n<R\}.\]
%for any real number \(R\) with \(R>\min \mathcal{A}\). 
\begin{thm}
Let \(A,r\) be integers with \(A\geq 1\) and \(r\geq 2\). 
Let \(f_i(X)=\sum_{n=0}^{\infty} t_i(n) X^n (i=1,\ldots,r)\) be nonpolynomial power series with 
integral coefficients. 
We assume that \(f_1(X),\ldots,f_r(X)\) satisfy the following four assumptions: 
\begin{enumerate}
\item There exists a positive constant \(C_7\) satisfying 
\[0\leq t_i(n)\leq C_7\]
for any \(i=1,\ldots, r\) and nonnegative integer \(n\). 
\item Let \(k_1,\ldots,k_r\) be nonnegative integers. %If \(r=2\), then suppose that 
%\(k_1\leq A-1\). Otherwise, assume that \(k_1\leq A\). 
Suppose that 
\begin{align}
\left\{
\begin{array}{cc}
k_1\leq A-1 & \mbox{ if }r=2,\\
k_1\leq A & \mbox{ if }r\geq 3.
\end{array}
\right.
\label{eqn:cri1}
\end{align}
Then 
\begin{align}
&R-\theta\left(R;\sum_{h=1}^{r-2} k_h S(f_h)+(1+k_{r-1})S(f_{r-1})\right)\nonumber\\
&\hspace{35mm}=o\left(\frac{R}{\prod_{h=1}^r \lambda(S(f_h);R)^{k_h}}\right)
\label{ppp}
\end{align}
as \(R\) tends to infinity. 
\item There exists a positive real number \(\delta\) satisfying 
\begin{align*}
\lambda(S(f_1);R)=o\left(R^{-\delta+1/A}\right)
%\label{eqn:cri2}
\end{align*}
as \(R\) tends to infinity. Moreover, for any \(i=2,\ldots,r\) and any real number \(\varepsilon\), we have 
\[\lambda(S(f_i);R)=o\Bigl(\lambda\bigl(S(f_{i-1});R\bigr)^{\varepsilon}\Bigr)\]
as \(R\) tends to infinity. 
\item There exist positive constants \(C_8,C_9\) such that 
\[[R, C_8 R]\cap S(f_r)\ne \emptyset\]
for any real number \(R\) with \(R\geq C_9\). 
\end{enumerate}
Then, for any Pisot or Salem number \(\beta\), the set 
\[\{f_1(\beta^{-1})^{k_1}f_2(\beta^{-1})^{k_2}\cdots f_r(\beta^{-1})^{k_r}\mid 
k_1,k_2,\ldots,k_r\in\mathbb{N}, k_1\leq A\}\]
is linearly independent over \(\mathbb{Q}(\beta)\). 
\label{thm:cri1}
\end{thm}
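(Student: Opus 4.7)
The plan is to argue by contradiction and, ultimately, to invoke Corollary \ref{cor:tra1}. First I would assume there is a nontrivial $\mathbb{Q}(\beta)$-linear relation
$$\sum_{\mathbf{k}\in\mathcal{K}} c_{\mathbf{k}}(\beta)\prod_{i=1}^r f_i(\beta^{-1})^{k_i}=0$$
indexed over a finite set $\mathcal{K}\subset\mathbb{N}^r$ with every $k_1\leq A$ and not all $c_{\mathbf{k}}(\beta)$ zero. Since $\mathbb{Q}(\beta)=\mathbb{Q}[\beta]$, I may clear denominators so that $c_{\mathbf{k}}(T)\in\mathbb{Z}[T]$, and then multiply by a sufficiently large power $\beta^{-M}$ to convert the identity into $H(\beta^{-1})=0$, where
$$H(X):=\sum_{\mathbf{k}\in\mathcal{K}} Q_{\mathbf{k}}(X)\prod_{i=1}^r f_i(X)^{k_i}\in\mathbb{Z}[[X]],\qquad Q_{\mathbf{k}}(X):=X^M c_{\mathbf{k}}(X^{-1})\in\mathbb{Z}[X].$$
By assumption 1 the coefficients of $H$ are uniformly bounded integers.

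The strategy is to extract from $H$ a nonpolynomial power series $h(X)$ with bounded \emph{nonnegative} integer coefficients for which $h(\beta^{-1})\in\mathbb{Q}(\beta)$, while simultaneously $\lambda(S(h);R)<R^{-\delta+1/A}$ for infinitely many $R$ and some $\delta>0$. Once such an $h$ is produced, Corollary \ref{cor:tra1} forces $[\mathbb{Q}(h(\beta^{-1}),\beta):\mathbb{Q}(\beta)]\geq A+1\geq 2$, contradicting $h(\beta^{-1})\in\mathbb{Q}(\beta)$.

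To pick out the right $h$, I would single out a lexicographically leading multi-index $\mathbf{k}^\star\in\mathcal{K}$: the largest $k_1^\star$ among $\mathbf{k}$ with $c_{\mathbf{k}}\neq 0$, then the largest $k_2^\star$, and so on. Assumption 3 (the tower of bounds $\lambda(S(f_i);R)=o(\lambda(S(f_{i-1});R)^{\varepsilon})$) implies this lexicographic choice also asymptotically maximizes $\prod_h\lambda(S(f_h);R)^{k_h}$, so for large $R$ the $\mathbf{k}^\star$-term dominates every other in support density by any prescribed polynomial factor. Assumption 2 (the gap estimate (\ref{ppp}) at $(k_1^\star,\ldots,k_{r-1}^\star)$) combined with assumption 4 (every long interval $[R,C_8 R]$ meets $S(f_r)$) should then show that the non-leading terms cancel only a negligible fraction of the support of the leading product $Q_{\mathbf{k}^\star}(X)\prod_i f_i(X)^{k_i^\star}$; I would extract $h$ from a positive sub-combination anchored on the leading term, with the missing spots inside the Minkowski sum appearing in (\ref{ppp}) filled using elements of $S(f_r)$ provided by assumption 4. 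The restriction $k_1^\star\leq A$ (or $k_1^\star\leq A-1$ when $r=2$) paired with assumption 3 will then give the required bound $\lambda(S(h);R)=o(R^{-\delta+1/A})$.

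The hard part will be the combinatorial bookkeeping in the previous paragraph: verifying rigorously that, after all cancellations inside $H$, a sufficiently dense set of positions in the leading support survives to form $S(h)$ while keeping the coefficients of $h$ nonnegative, and arranging the construction so that $h(\beta^{-1})$ still lies in $\mathbb{Q}(\beta)$ (presumably by expressing $h(\beta^{-1})$ in terms of $H(\beta^{-1})=0$ and finitely many values $f_i(\beta^{-1})$). Condition (\ref{eqn:cri1}) enters exactly here, reflecting that for $r=2$ there is no $f_h$ with $h\geq 3$ available as an independent filler, so the dominance argument must proceed with one unit less of slack in $k_1$.
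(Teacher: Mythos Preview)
Your proposal contains a genuine gap. The claim ``By assumption 1 the coefficients of $H$ are uniformly bounded integers'' is false: assumption~1 bounds the coefficients of each $f_i$ individually, but the coefficients of a product $\prod_i f_i(X)^{k_i}$ grow polynomially in the index (as the paper itself records in (\ref{pr2}): $\rho(\bk;m)\le C_7^{|\bk|}(1+m)^{|\bk|}$). Hence $H$ does not have bounded coefficients, and neither Theorem~\ref{thm:2-1} nor Corollary~\ref{cor:tra1} can be applied to $H$ or to any $h$ that carries the leading product. Your plan to ``extract $h$ from a positive sub-combination anchored on the leading term'' inherits exactly this problem, and the parenthetical about arranging $h(\beta^{-1})\in\mathbb{Q}(\beta)$ via $H(\beta^{-1})=0$ and the values $f_i(\beta^{-1})$ is circular, since those values are precisely the numbers whose independence is at stake.

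The paper's route is genuinely different and avoids this obstacle. After the same reduction to a relation $P(\xi_1,\ldots,\xi_r)=0$ and the same lexicographic choice of the leading index $\bg$, it does \emph{not} try to feed a single series back into Corollary~\ref{cor:tra1}. Instead it forms, for each $R$, the tail
\[
Y_R=\sum_{\bk\in\Lambda}A_{\bk}\sum_{m\ge 1}\rho(\bk;m+R)\beta^{-m},
\]
observes that $Y_R$ is an algebraic \emph{integer} in $\mathbb{Q}(\beta)$ (this is where the polynomial growth of $\rho(\bk;\cdot)$ is harmless), and uses the Pisot/Salem property to prove a Liouville-type dichotomy: either $Y_R=0$ or $|Y_R|\ge R^{-C}$ (Lemma~\ref{prlem4}). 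The contradiction then comes from a double count of $y_N=\#\{R<N:Y_R\ge 1/\beta\}$: an upper bound $y_N=o(N^{1-\delta/2})$ from the sparsity assumption~3 (Lemma~\ref{prlem5}), versus a lower bound $y_N\gg N/\log N$ obtained by combining the gap estimate~(\ref{ppp}), assumption~4, and the dichotomy to show that on long intervals free of lower-order support the recursion $Y_{n-1}=\beta^{-1}(A_{\bg}\rho(\bg;n)+Y_n)$ forces $Y_R\ge 1/\beta$ often (Lemmas~\ref{prlem8}--\ref{prlem10}). Corollary~\ref{cor:tra1} is used only peripherally, to guarantee $g_{r-1}\ge 1$ and that $\xi_2,\ldots,\xi_r$ are transcendental; the main engine is the $Y_R$-count, not a reapplication of the digit-count theorem.
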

Let \(a(R)\) be a real valued function defined on an interval \([R_0,\infty)\) with \(R_0\in\mathbb{R}\). 
We say that 
\(a(R)\) ultimately increasing if 
\(a(R)\) is strictly increasing for any sufficiently large real number \(R\). 
Similarly, we say that \((a(m))_{m=m_0}^{\infty}\) is ultimately increasing if this sequence is strictly 
increasing for any sufficiently large integer \(m\). 
\begin{thm}
Let \(a(R),u(R)\) be ultimately increasing functions defined on \([m_0,\infty)\) with \(m_0\in\mathbb{N}\). 
Assume that \((\lfloor a(m)\rfloor)_{m=m_0}^{\infty}\) and 
\((\lfloor u(m)\rfloor)_{m=m_0}^{\infty}\) are also 
ultimately increasing. 
Let \(b(R),v(R)\) be the inverse functions of \(a(R),u(R)\), respectively, for any sufficiently large \(R\). 
Assume that \(a(R)\) satisfies the following two assumptions: 
\begin{enumerate}
\item \((\log a(R))/(\log R)\) is ultimately increasing and 
\begin{align}\lim_{R\to\infty}\frac{\log a(R)}{\log R}=\infty.
\label{qqq}
\end{align}
\item We have \(a(R)\) is differentiable. Moreover, for an arbitrary positive real number \(\varepsilon\), 
there exists a positive constant \(C_{10}(\varepsilon)\), depending only on \(\varepsilon\), such that 
\[(\log a(R))'<R^{-1+\varepsilon}\]
for any real number \(R\) with \(R\geq C_{10}(\varepsilon)\). 
\end{enumerate}
Moreover, suppose that \(u(R)\) fulfills the following two assumptions: 
\begin{enumerate}
\item There exists a positive constant \(C_{11}\) such that 
\[\frac{u(R+1)}{u(R)}<C_{11}\]
for any sufficiently large real number \(R\). 
\item 
\begin{align}
\lim_{R\to\infty}\frac{\log b(R)}{\log v(R)}=\infty.
\label{eqn:cri3}
\end{align}
Then, for any Pisot or Salem number \(\beta\), the two numbers 
\[\sum_{m=m_0}^{\infty}\beta^{-\lfloor a(m)\rfloor}, \sum_{m=m_0}^{\infty}\beta^{-\lfloor u(m)\rfloor}\]
are algebraically independent. 
\end{enumerate}
\label{thm:cri2}
\end{thm}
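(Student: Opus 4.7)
The natural plan is to deduce Theorem \ref{thm:cri2} from the linear-independence criterion Theorem \ref{thm:cri1} applied with $r=2$. Put
\[f_1(X)=\sum_{m=m_0}^{\infty}X^{\lfloor a(m)\rfloor},\qquad f_2(X)=\sum_{m=m_0}^{\infty}X^{\lfloor u(m)\rfloor}.\]
If, for every positive integer $A$, one verifies the four hypotheses of Theorem \ref{thm:cri1}, the set $\{f_1(\beta^{-1})^{k_1}f_2(\beta^{-1})^{k_2}\mid k_1\leq A,\ k_2\in\mathbb{N}\}$ is linearly independent over $\mathbb{Q}(\beta)$. Letting $A\to\infty$ yields linear independence of the full family $\{f_1(\beta^{-1})^{k_1}f_2(\beta^{-1})^{k_2}\mid k_1,k_2\in\mathbb{N}\}$ over $\mathbb{Q}(\beta)$, which is exactly algebraic independence of $f_1(\beta^{-1})$ and $f_2(\beta^{-1})$ over $\mathbb{Q}(\beta)$, and a fortiori over $\mathbb{Q}$.

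Three of the four hypotheses are straightforward. Hypothesis 1 is immediate, since $(\lfloor a(m)\rfloor)$ and $(\lfloor u(m)\rfloor)$ are ultimately strictly increasing, so the coefficients of $f_1,f_2$ are eventually $0$ or $1$. Hypothesis 4 follows from $u(R+1)/u(R)<C_{11}$: for large $R$ pick $m$ with $u(m)\leq R<u(m+1)\leq C_{11}R$, so $\lfloor u(m+1)\rfloor\in S(f_2)\cap[R,2C_{11}R]$. For hypothesis 3, the assumption (\ref{qqq}) gives $\log b(R)/\log R\to 0$, hence $\lambda(S(f_1);R)\sim b(R)=R^{o(1)}$; for each $A$ one can then choose $\delta>0$ with $\lambda(S(f_1);R)=o(R^{1/A-\delta})$. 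The comparison $\lambda(S(f_2);R)=o(\lambda(S(f_1);R)^{\varepsilon})$ for every $\varepsilon>0$ is the direct translation of (\ref{eqn:cri3}) via $\lambda(S(f_2);R)\sim v(R)$.

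The main obstacle is hypothesis 2, which for $r=2$ reads
\[R-\theta\bigl(R;(1+k_1)S(f_1)\bigr)=o\!\left(\frac{R}{b(R)^{k_1}v(R)^{k_2}}\right)\quad(R\to\infty),\quad 0\leq k_1\leq A-1,\ k_2\geq 0.\]
I would attack this by a greedy decomposition: for large $R$, choose $m_1$ maximal with $\lfloor a(m_1)\rfloor\leq R$, set $R_1:=R-\lfloor a(m_1)\rfloor$, and iterate $k_1$ further times to produce an element of $(1+k_1)S(f_1)$ close to $R$. The differential bound $(\log a(R))'<R^{-1+\varepsilon}$ controls the gap in $S(f_1)$ at level $\lfloor a(m)\rfloor\sim R_i$ by approximately $R_i\cdot b(R_i)^{-1+\varepsilon}$, so each step contracts the remainder roughly by that factor. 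The technical heart is to combine the $1+k_1\leq A$ contractions and verify that the resulting remainder is $o(R/b(R)^{A-1})$, absorbing the harmless $v(R)^{k_2}=R^{o(1)}$ factor and the floor errors, uniformly in the fixed $A$. The divergence of $\log a(R)/\log R$ — which forces $b(R)$ to be smaller than any fixed power of $R$ — is what should make this bookkeeping go through no matter how large $A$ is; carrying out the contraction count cleanly is where the real work lies.
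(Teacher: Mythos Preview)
Your plan is correct and coincides with the paper's proof: it applies Theorem \ref{thm:cri1} with $r=2$ and arbitrary $A$, checks hypotheses 1, 3, 4 exactly as you describe, and establishes hypothesis 2 via the same greedy/inductive gap bound (the paper's Lemma \ref{lem6:1}, giving $R-\theta(R;kS(f_1))\ll R/b(R)^{k-\varepsilon}$, then taking $k=1+k_1$, $\varepsilon=1/2$ and absorbing $v(R)^{k_2}$ via (\ref{eqn:cri3})). One point to sharpen in your sketch: the contraction bookkeeping succeeds not because $\log a(R)/\log R$ diverges but because it is \emph{ultimately increasing}, which gives $b(R^{\alpha})\geq b(R)^{\alpha}$ for $0<\alpha<1$ and is exactly what lets you compare $b(R_i)$ to $b(R)$ after each greedy step.
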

\section{Proof of main results}
In this section we prove results in Section 3, using Theorems \ref{thm:cri1} and \ref{thm:cri2}. 
%For convenience, we define \(\varphi(y;R)\) (resp. \(\varphi(y,z;R)\)) also for any real number \(R\geq 1\) (resp. \(R\geq 3\)). 
\subsection{Proof of results on algebraic independence}
\begin{proof}[Proof of Theorem \ref{thm:mai1}]
Let \(y_1,y_2,\ldots,y_r\) be real numbers with \(1\leq y_1<y_2<\cdots<y_r\). 
We show that 
\(f_i(X):=\xi(y_i;X) \ (i=1,\ldots,r)\)
fulfill the assumptions in Theorem \ref{thm:cri1} for any positive integer \(A\). 
The first assumption is clear. 
%in the same way as the proof of Theorem 1.3 in \cite{kan1}. 
Recall that we proved Theorem 1.3 in \cite{kan1}, showing for any integer \(b \geq 2\) that 
\(f_1(b^{-1}),\ldots,f_r(b^{-1})\) satisfy the assumptions of Theorem 2.1 in \cite{kan1}. 
In the same way, we can check that \(f_1(X),\ldots,f(X)\) fulfill the third and fourth assumptions in 
Theorem \ref{thm:cri1}. \par
%Using the mean value theorem, we see 
%\[\lim_{R\to\infty}\frac{\varphi(y_r;R+1)}{\varphi(y_r;R)}=1,\]
%which implies that the fourth assumption holds. \par
In what follows, we verify the second assumption. 
Let \(y\) be a fixed positive real number. Then we denote the inverse function of 
\(\varphi(y;R)\) by 
\[\psi(y;R)=\exp\left((\log R)^{1/(1+y)}\right).\]
For \(i=1,\ldots,r\), we have 
\[\lambda\bigl(S(f_i);R\big)\sim \psi(y_i;R)\]
as \(R\) tends to infinity. %In particular, for any positive real number \(\varepsilon\), we see that 
%\begin{align*}
%\lambda\bigl(S(f_1);R\big)
%\sim \psi(y_1;R)=o(\exp(\varepsilon \log R))=o\left(R^{\varepsilon}\right)
%\end{align*}
%and that, for \(i=2,\ldots,r\), 
%\begin{align}
%\lambda\bigl(S(f_i);R\big)
%\sim \psi(y_i;R)=
%o\left( \exp\left(\varepsilon (\log R)^{1/(1+y_{i+1})}\right) \right)=o\left(R^{\varepsilon}\right)
%\label{eqn5:1}
%\end{align}
%as \(R\) tends to infinity. Hence, the third assumption is satisfied for any positive integer \(A\). \par
\begin{lem}
Let \(\bk=(k_1,\ldots,k_r)\in\mathbb{N}^r\backslash \{(0,\ldots,0)\}\). Then 
\[R-\theta\left(R;\sum_{i=1}^r k_i S(f_i)\right)\ll 
\frac{R(\log R)^{k_1+\cdots+k_r}}{\prod_{i=1}^r \psi(y_i;R)^{k_i}}\]
for any real \(R\) with \(R\geq 2\). 
\label{lem5:1}
\end{lem}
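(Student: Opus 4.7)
The plan is to build an element of $\sum_i k_i S(f_i)$ that approximates $R$ from below by a greedy iteration, and to bound the error via a pointwise gap estimate for each $S(f_i)$ combined with a stability estimate on the residuals.

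For the gap estimate, for $T\geq\varphi(y_i;2)$ large, setting $m^*:=\lfloor\psi(y_i;T)\rfloor$ and applying the mean value theorem to $m\mapsto\varphi(y_i;m)=\exp((\log m)^{1+y_i})$, whose logarithmic derivative is $(1+y_i)(\log m)^{y_i}/m$, yields
\[
\varphi(y_i;m^*+1)-\varphi(y_i;m^*)\ll\frac{T(\log T)^{y_i/(1+y_i)}}{\psi(y_i;T)},
\]
using $m^*\sim\psi(y_i;T)$ and $\log m^*\sim(\log T)^{1/(1+y_i)}$. For the iteration, write $K:=k_1+\cdots+k_r$, fix any listing $(i(1),\ldots,i(K))$ of the index multiset that uses each $i$ exactly $k_i$ times, set $R_0:=R$, and define inductively $n_\ell:=\max\{n\in S(f_{i(\ell)}):n\leq R_{\ell-1}\}$ and $R_\ell:=R_{\ell-1}-n_\ell$. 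Then $\sum_\ell n_\ell\in\sum_i k_i S(f_i)$, and so $R-\theta(R;\sum_i k_i S(f_i))\leq R_K$.

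The crux is the stability estimate $\psi(y_i;R_\ell)\gg\psi(y_i;R)$ throughout, which allows replacing the scale-$R_{\ell-1}$ gap bound by the scale-$R$ expression $R_{\ell-1}\log R/\psi(y_{i(\ell)};R)$. Iterating the gap bound yields $\log R-\log R_\ell\ll K(\log R)^{1/(1+y_1)}$, since $1/(1+y)$ is decreasing in $y$. Leveraging $y_1\geq 1$ (which holds in the setting of Theorem \ref{thm:mai1}), the inequality $1/(1+y_i)\leq y_1/(1+y_1)$ for each $i\geq 1$ implies $(\log R)^{1/(1+y_i)}-(\log R_\ell)^{1/(1+y_i)}=O(1)$ and hence $\psi(y_i;R_\ell)\gg\psi(y_i;R)$. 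Combined with the trivial $(\log R_\ell)^{y_i/(1+y_i)}\leq\log R$, each step obeys $R_\ell\ll R_{\ell-1}\log R/\psi(y_{i(\ell)};R)$; multiplying the $K$ bounds gives
\[
R_K \ll R\prod_{i=1}^r\left(\frac{\log R}{\psi(y_i;R)}\right)^{k_i}=\frac{R(\log R)^K}{\prod_{i=1}^r\psi(y_i;R)^{k_i}}.
\]

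I expect the stability estimate on $\psi(y_i;R_\ell)$ to be the main obstacle: the weaker assertion $\log R_\ell\sim\log R$ is insufficient, because a sublinear log-loss can still depress $\psi$ by a diverging factor, and the hypothesis $y_1\geq 1$ has to be used carefully at this step. The degenerate case where $R_\ell$ drops below $\varphi(y_{i(\ell+1)};2)$, so that only trivial elements of $S(f_{i(\ell+1)})$ can be picked, is handled separately and is harmless: the target bound $R(\log R)^K/\prod_i\psi(y_i;R)^{k_i}$ equals $R^{1-o(1)}$ and hence dominates any $O(1)$ residual for $R$ large.
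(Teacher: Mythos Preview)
Your overall strategy---greedy iteration plus a per-step gap estimate plus a stability estimate for $\psi(y_i;R_\ell)$---matches the paper's approach (which refers to Lemma~3.1 of \cite{kan1}; compare also the closely analogous proof of Lemma~\ref{lem6:1} in this paper). The gap estimate and the identification of where the hypothesis $y_1\geq 1$ enters are both correct.

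However, the stability step contains a genuine gap. You write that ``iterating the gap bound yields $\log R-\log R_\ell\ll K(\log R)^{1/(1+y_1)}$,'' but the gap bound is one-sided: it bounds $R_\ell$ from \emph{above}, hence $\log R-\log R_\ell$ from \emph{below}, not from above. Concretely, $R_\ell$ can be arbitrarily small---even $0$---whenever $R_{\ell-1}$ lands on or just above an element of $S(f_{i(\ell)})$; in that regime $\psi(y_i;R_\ell)\gg\psi(y_i;R)$ simply fails. Your degenerate-case discussion only covers $R_\ell=O(1)$, but the intermediate range (say $R_\ell\asymp R^{1/2}$) is uncovered, and there stability genuinely breaks down. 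As written, the argument is circular: you are invoking the gap bound, which controls $R_\ell$ from above, to manufacture the \emph{lower} bound on $R_\ell$ that stability needs.

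The repair is a dichotomy, exactly as in the proof of Lemma~\ref{lem6:1}. Put $T(R):=R(\log R)^K/\prod_i\psi(y_i;R)^{k_i}$. If at some step $R_\ell<T(R)$, then $R_K\leq R_\ell<T(R)$ and you are done. Otherwise $R_{\ell}\geq T(R)$ for every $\ell<K$, and this lower bound gives directly
\[
\log R-\log R_\ell\;\leq\;\sum_{i} k_i(\log R)^{1/(1+y_i)}\;\ll\;K(\log R)^{1/(1+y_1)},
\]
after which your mean-value computation using $1/(1+y_i)\leq y_1/(1+y_1)\leq y_i/(1+y_i)$ (valid since $y_i\geq y_1\geq 1$) yields $(\log R)^{1/(1+y_i)}-(\log R_\ell)^{1/(1+y_i)}=O(1)$, hence $\psi(y_i;R_\ell)\gg\psi(y_i;R)$, and the multiplication step goes through.
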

\begin{proof}
We can show Lemma \ref{lem5:1} in the same way as the proof of Lemma 3.1 in \cite{kan1}. 
\end{proof}
Let \(A\) be any positive integer and \(k_1,\ldots,k_r\) any nonnegative integers. 
Without loss of generality, we may assume that \(k_r\geq 1\). 
Applying Lemma \ref{lem5:1} with 
\(\bk=(k_1,\ldots,k_{r-2},1+k_{r-1},0)\in\mathbb{N}^r\backslash \{(0,\ldots,0)\}\), 
we get for any \(R\geq 2\) that 
\begin{align}
&R-\theta\left(R;\sum_{h=1}^{r-2} k_h S(f_h)+(1+k_{r-1})S(f_{r-1})\right)\nonumber\\
&\hspace{30mm}=o\left(\frac{R(\log R)^{1+k_1+\cdots+k_{r-1}}}
{\psi(y_{r-1};R)\prod_{h=1}^{r-1} \psi(y_{i};R)^{k_i}}\right).
\label{eqn5:2}
\end{align}
Observe that 
\begin{align*}
\log \left((\log R)^{1+k_1+\cdots+k_{r-1}}\right)
& \ll \log\log R\\
& =o\left((\log R)^{1/{(1+y_{r-1})}}\right)\\
&=o\left(\frac12\log \psi(y_{r-1};R)\right)
\end{align*}
as \(R\) tends to infinity. Thus, we see 
\begin{align}
(\log R)^{1+k_1+\cdots+k_{r-1}}=o\left(\psi(y_{r-1};R)^{1/2}\right).
\label{eqn5:3}
\end{align}
Combining (\ref{eqn5:2}) and (\ref{eqn5:3}), we obtain 
\begin{align*}
&R-\theta\left(R;\sum_{h=1}^{r-2} k_h S(f_h)+(1+k_{r-1})S(f_{r-1})\right)\\
&=
o\left(\frac{R}
{\psi(y_{r-1};R)^{1/2}\prod_{h=1}^{r-1} \psi(y_{i};R)^{k_i}}\right)
=
o\left(\frac{R}
{\prod_{h=1}^{r} \psi(y_{i};R)^{k_i}}\right),
\end{align*}
where we use the third assumption 
in Theorem \ref{thm:cri1} with \(i=r\) and \(\varepsilon=1/(2k_r)\) for the last equality. Therefore, we checked the second assumption. 
\end{proof}
\begin{proof}[Proof of Theorem \ref{thm:mai2}]
Without loss of generality, we may assume that \(y_1<y_2\), or \(y_1=y_2\) and \(z_1<z_2\). 
Put 
\[a(R):=\varphi(y_1,z_1;R), u(R):=\varphi(y_2,z_2;R).\]
In what follows, we check that 
\(a(R),u(R)\) satisfy the assumptions in Theorem \ref{thm:cri2}. Note that 
 \(a(R), u(R)\ (R\geq 3) \) and 
\((\lfloor a(m)\rfloor)_{m=3}^{\infty}\), \((\lfloor u(m)\rfloor)_{m=3}^{\infty}\) are 
ultimately increasing. The assumptions on \(a(R)\) in Theorem \ref{thm:cri2} are easily checked. 
In fact, the first assumption holds by 
\begin{align*}
\frac{\log a(R)}{\log R}=(\log R)^{y_1} (\log \log R)^{z_1}.
%\label{eqn5:4}
\end{align*}
Moreover, the second assumption follows from 
\begin{align}
&(\log a(R))'\nonumber\\
& =
\left\{
\begin{array}{cc}
(1+y_1)(\log R)^{y_1}/R & \mbox{ if } z_1=0, \\
(\log R)^{y_1}(\log \log R)^{-1+z_1}\bigl(z_1+(1+y_1)\log \log R\bigr)/R & \mbox{ if }z_1\ne 0.
\end{array}
\right. 
\label{eqn5:5}
\end{align}
Calculating \((\log u(R))'\) in the same way as (\ref{eqn5:5}), we see 
\[\lim_{R\to\infty}(\log u(R))'=0.\]
Using the mean value theorem, we get 
\begin{align}
\lim_{R\to\infty}\frac{u(R+1)}{u(R)}=1,
\label{eqn5:6}
\end{align}
which implies the first assumption on \(u(R)\) in Theorem \ref{thm:cri2}. \par
We now check the second assumption on \(u(R)\). Using 
\[\log a(R)=(\log R)^{1+y_1} (\log \log R)^{z_1},\]
we get 
\begin{align}
\log R=(\log b(R))^{1+y_1} (\log \log b(R))^{z_1}.
\label{eqn5:7}
\end{align}
Similarly, 
\begin{align}
\log R=(\log v(R))^{1+y_2} (\log \log v(R))^{z_2}.
\label{eqn5:8}
\end{align}
First we assume that \(y_1<y_2\). Put \(d:=y_2-y_1>0\). By (\ref{eqn5:7}) and (\ref{eqn5:8}), we get 
\[(\log v(R))^{1+y_1+(2d)/3}<\log R< (\log b(R))^{1+y_1+d/3}\]
for any sufficiently large \(R\). 
Consequently, we obtain 
\[(\log v(R))^{d/3}<\left(\frac{\log b(R)}{\log v(R)}\right)^{1+y_1+d/3},\]
which implies (\ref{eqn:cri3}). \par
Next we assume that \(y_1=y_2=:y\) and \(z_1<z_2\). Using (\ref{eqn5:7}) and (\ref{eqn5:8}) again, 
we see 
\begin{align}
\frac{(\log\log v(R))^{z_2}}{(\log\log b(R))^{z_1}}
=\left(\frac{\log b(R)}{\log v(R)}\right)^{1+y}.
\label{eqn5:9}
\end{align}
Taking the logarithm of the both-hand sides of (\ref{eqn5:9}), we get 
\begin{align}
&z_2\log\log\log v(R)-z_1\log\log\log b(R)\nonumber\\
&\hspace{20mm}=
(1+y)\log\log b(R) - (1+y)\log\log v(R)
\label{eqn5:10}
\end{align}
Note that \(b(R)\geq v(R)\) for any sufficiently large \(R\). Thus, dividing (\ref{eqn5:10}) by 
\(\log\log b(R)\), we see 
\begin{align}
\lim_{R\to\infty}\frac{\log \log v(R)}{\log \log b(R)}=1.
\label{eqn5:11}
\end{align}
Combining (\ref{eqn5:9}), (\ref{eqn5:11}), and \(z_2>z_1\), we deduce (\ref{eqn:cri3}). 
\end{proof}
\begin{proof}[Proof of Theorem \ref{thm:abc}]
Applying Theorem \ref{thm:cri2} with 
\[a(R):=\varphi(y,z;R), u(R):=x^R,\]
we deduce Theorem \ref{thm:abc}. 
In fact, we can check the assumptions on \(a(R)\) in 
Theorem \ref{thm:cri2} in the same way as the proof of 
Theorem \ref{thm:mai2}. 
Moreover, (\ref{eqn:cri3}) is seen by (\ref{eqn5:7}) and 
\(v(R)=(\log R)/(\log x).\)
\end{proof}
\subsection{Proof of results on linear independence}
\begin{proof}[Proof of Theorem \ref{thm:mai3}]
%We show that \(f_1(X):=\gamma(\rho;X)\) and \(f_2(X):=g(X)\) satisfy the assumptions on Theorem \ref{thm:cri1}. 
We show that the assumptions on Theorem \ref{thm:cri1} are satisfied, where \(A\) is defined as in 
Theorem \ref{thm:mai3}, \(r=2\), \(f_1(X):=\gamma(\rho;X)\), and \(f_2(X):=g(X)\). 
The first assumption is clear. 
%on Theorem \ref{thm:cri1} follows from the definition of \(\gamma(\rho;X)\) and the first assumption on \(\mathcal{F}\). 
The fourth assumption %on Theorem \ref{thm:cri1} 
follows from the third assumption on \(\mathcal{F}\). \par
%We now check the third assumption. 
%Using (\ref{eqn:tra4}) and the second assumption on \(\mathcal{F}\), we get 
%\[\lambda(S(f_2);R)=o\left(R^{\varepsilon/x}\right)=o\left(\lambda\bigl(S(f_1);R\bigr)^{\varepsilon}\right)\]
%as \(R\) tends to infinity. By considering (\ref{eqn:tra4}) again, it suffices to show that 
In order to check the third assumption, it suffices to show that 
\begin{align}
\frac{1}{\rho}<\frac1{A}
\label{eqn5:12}
\end{align}
by (\ref{eqn:tra4}) and the second assumption on \(\mathcal{F}\). 
We may assume that \(A\geq 4\) by (\ref{eqn:mai2}). 
%By (\ref{eqn:mai2}), it suffices to prove that, for any \(A\geq 4\), we have 
%\(A^{-1}>\sigma_A\), that is, \(G_A(A^{-1})>0\) by 
%the second statement of Lemma \ref{lem:mai1}. 
%for the proof of (\ref{eqn:cri2}). If \(A\leq 3\), then (\ref{eqn5:12}) is clear by (\ref{eqn:mai2}). 
%If \(A\geq 4\), then (\ref{eqn5:12}) follows from (\ref{eqn:mai2}) and the following lemma: 
%\begin{lem}
%For any integer \(A\geq 4\), we have \(A^{-1}>\sigma_A\). 
%\label{lem5:2}
%\end{lem}
%\begin{proof}
%By the second statement of Lemma \ref{lem:mai1}, it suffices to show \(G_A(A^{-1})>0\). 
Using 
\begin{align*}
\log \left(1-\frac1{A}\right)^A
&=-A\sum_{n=1}^{\infty}\frac1{n}A^{-n}\\ &>-A\sum_{n=1}^{\infty}A^{-n}
=-1-\frac{1}{A-1},
\end{align*}
we get by \(A\geq 4\) that 
\begin{align*}
\left(1-\frac1{A}\right)^A& >\exp\left(-1-\frac{1}{A-1}\right)\nonumber\\
&\geq \exp\left(-\frac43\right)>\frac14\geq \frac1{A}. 
%\label{eqn5:13}
\end{align*}
Hence, we obtain 
\[G_A\left(\frac1{A}\right)=\left(1-\frac1{A}\right)^A-\frac1{A}>0,\]
which implies (\ref{eqn5:12}) by (\ref{eqn:mai2}) 
and the second statement of Lemma \ref{lem:mai1}. 
%\end{proof}
In what follows, we check the second assumption of Theorem \ref{thm:cri1}. 
The following lemma was inspired by the results of Daniel \cite{dan}. 
\begin{lem}
Let \(k\) be a positive integer. Then 
\begin{align}
R-\theta\bigl(R;kS(f_1)\bigr)=O\left(R^{(1-1/{\rho})^k}\right)
\label{eqn5:14}
\end{align}
for any \(R\geq 1\), where the implied constant in the symbol \(O\) does not depend on \(R\), but 
on \(k\). 
\label{lem5:3}
\end{lem}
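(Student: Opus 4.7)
My plan is to prove (\ref{eqn5:14}) by induction on $k$, at each step peeling off one element of $S(f_1)$ close to the current target. Since $0\in S(f_1)=\{\lfloor m^\rho\rfloor:m\in\mathbb{N}\}$, the Minkowski sum $jS(f_1)$ always contains $0$ for $j\geq 1$, which will let the inductive step concatenate cleanly.

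For the base case $k=1$, given $R\geq 1$, I take $m$ to be the largest integer with $\lfloor m^\rho\rfloor<R$. Then $R\leq\lfloor(m+1)^\rho\rfloor$ and $m\leq R^{1/\rho}$, so the mean value theorem applied to $x^\rho$ yields
\[
R-\theta(R;S(f_1))\leq\lfloor(m+1)^\rho\rfloor-\lfloor m^\rho\rfloor\leq\rho(m+1)^{\rho-1}+1\ll R^{1-1/\rho}.
\]

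For the inductive step, assume the bound holds for $k-1\geq 1$ with some constant $C_{k-1}$. Given $R\geq 1$, I set $s:=\theta(R;S(f_1))$ and $R':=R-s$, so that the base case gives $R'\leq c_1 R^{1-1/\rho}$ for an absolute constant $c_1$. If $R'\leq 1$, then $R-\theta(R;kS(f_1))\leq R'\leq 1\leq R^{(1-1/\rho)^k}$ and we are done. Otherwise $R'\geq 1$, and applying the induction hypothesis to $R'$ with the set $(k-1)S(f_1)$ produces $t:=\theta(R';(k-1)S(f_1))$ satisfying $R'-t\leq C_{k-1}R'^{(1-1/\rho)^{k-1}}$. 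Since $s\in S(f_1)$ and $t\in(k-1)S(f_1)$, the sum $s+t$ belongs to $kS(f_1)$; since $t<R'$, it also satisfies $s+t<R$. Therefore
\[
R-\theta(R;kS(f_1))\leq R-s-t=R'-t\leq C_{k-1}\bigl(c_1R^{1-1/\rho}\bigr)^{(1-1/\rho)^{k-1}}\ll R^{(1-1/\rho)^k},
\]
with an implied constant depending on $k$ through $C_{k-1}$ and $c_1$.

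The heart of the argument is the choice $s=\theta(R;S(f_1))$: it guarantees that the residual $R-s$ is already of size $R^{1-1/\rho}$, which is precisely what is needed so that raising to the power $(1-1/\rho)^{k-1}$ in the inductive hypothesis produces the desired exponent $(1-1/\rho)^k$. The only real obstacle is bookkeeping, namely tracking how the implied constants compound through the induction and handling the corner case $R'\leq 1$; no arithmetic input from Pisot or Salem numbers enters this step, as it is purely a combinatorial estimate on $S(f_1)$.
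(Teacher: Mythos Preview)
Your proof is correct and follows essentially the same induction scheme as the paper: both establish the base case $k=1$ via the mean value theorem on $x^{\rho}$, and both run the inductive step by splitting off one summand from $S(f_1)$ and applying the hypothesis to the residual. The only cosmetic difference is the order of the split: you first take $s=\theta(R;S(f_1))$ and then apply the hypothesis for $(k-1)S(f_1)$ to $R'=R-s$, whereas the paper first takes $\theta(R;kS(f_1))$ and then applies the $k=1$ case to the residual $R_0$; the resulting exponent computation $(1-1/\rho)\cdot(1-1/\rho)^{k-1}=(1-1/\rho)^{k}$ versus $(1-1/\rho)^{k}\cdot(1-1/\rho)=(1-1/\rho)^{k+1}$ is the same either way.
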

\begin{proof}
First we consider the case of \(k=1\). Using the mean value theorem, we see 
that 
\begin{align}
\lfloor (m+1)^{\rho}\rfloor - \lfloor m^{\rho}\rfloor
&= (m+1)^{\rho}-m^{\rho}+O(1)\nonumber\\
&=O\left(m^{{\rho}-1}\right)=O\left(\left\lfloor m^{\rho}\right\rfloor^{1-1/{\rho}}\right)
\label{eqn5:15}
\end{align}
for any positive integer \(m\). For any sufficiently large \(R\), take a positive integer \(m\) with 
%\[\lfloor (m+1)^x\rfloor \geq R > \lfloor m^x\rfloor.\]
\[\lfloor m^{\rho}\rfloor < R \leq \lfloor (m+1)^{\rho}\rfloor\]
Then we get 
\[R-\theta\bigl(R;S(f_1)\bigr)\leq \lfloor (m+1)^{\rho}\rfloor - \lfloor m^{\rho}\rfloor
=O\left(R^{1-1/{\rho}}\right)\]
by (\ref{eqn5:15}). \par
Next, we assume that (\ref{eqn5:14}) holds for a positive integer \(k\). Let 
\[R_0:=R-\theta\bigl(R;kS(f_1)\bigr)\in\mathbb{Z}^{+}.\]
The inductive hypothesis implies that 
\begin{align}
R_0=O\left(R^{(1-1/{\rho})^k}\right).
\label{eqn5:16}
\end{align}
Set 
\[\eta:=\theta\bigl(R;kS(f_1)\bigr)+\theta\bigl(R_0;S(f_1)\bigr).\]
Then we have \(\eta\in (k+1)S(f_1)\) and 
\begin{align}
R-\eta=R_0-\theta\bigl(R_0;S(f_1)\bigr)>0.
\label{eqn5:17}
\end{align}
Thus, 
\begin{align}
\theta\bigl(R;(k+1)S(f_1)\bigr)\geq \eta.
\label{eqn5:18}
\end{align}
Combining (\ref{eqn5:17}) and (\ref{eqn5:18}), we obtain 
\begin{align*}
 R- \theta\bigl(R;(k+1)S(f_1)\bigr)&\leq R-\eta\\
&=R_0-\theta\bigl(R_0;S(f_1)\bigr)%\\
%&=O\left(R_0^{(1-1/{\rho})^k}\right)=O\left(R^{(1-1/{\rho})^{k+1}}\right).
\end{align*}
Consequently, using (\ref{eqn5:14}) with 
\(k=1\) and \(R=R_0\), we deduce that 
\begin{align*}
0&< R- \theta\bigl(R;(k+1)S(f_1)\bigr)\\
&=O\left(R_0^{1-1/{\rho}}\right)=O\left(R^{(1-1/{\rho})^{k+1}}\right)
\end{align*}
by (\ref{eqn5:16}). 
\end{proof}
Using Lemma \ref{lem5:3} with \(k=1+k_1\), we get 
\begin{align*}
\log_R F_1(R)&:=\log_R\Bigl(R-\theta\bigl(R;(1+k_1)S(f_1)\bigr)\Bigr)\\
&\leq \left(1-\frac1{\rho}\right)^{1+k_1}+o(1)
\end{align*}
as \(R\) tends to infinity. Moreover, using (\ref{eqn:tra4}) and the second assumption on \(\mathcal{F}\), 
we see 
\begin{align*}
\log_R F_2(R):=\log_R\left(
\frac{R}{\prod_{i=1}^2\lambda(S(f_i);R)^{k_i}}
\right)
=1-\frac{k_1}{\rho}+o(1). 
\end{align*}
Thus, we obtain 
\[\log_R F_1(R)-\log_R F_2(R)\leq G_{1+k_1}\left(\frac1{\rho}\right)+o(1)\]
as \(R\) tends to infinity. %For the proof of the second assumption of Theorem \ref{thm:cri1}, 
For the proof of (\ref{ppp}), 
it suffices to show that 
\begin{align}
G_{1+k_1}\left(\frac1{\rho}\right)<0.
\label{eqn5:19}
\end{align}
In fact, (\ref{eqn5:19}) implies that there exists a positive constant \(c\) satisfying 
\[F_1(R)< R^{-c} F_2(R)\]
for any sufficiently large \(R\). \par
If \(k_1=0\) or \(k_1=1\), then (\ref{eqn5:19}) is clear by 
\(G_1(X)=-X\) and \(G_2(X)=-X(1-X)\). If \(k_1=2\), then we have \(G_3(X)=-X(1-3X+X^2)\) and 
%\(\sigma_3^{-1}=(3+\sqrt{5})/2\). 
\(\sigma_3=(3-\sqrt{5})/2\). 
By (\ref{eqn5:12}) and (\ref{eqn:cri1}), we get 
%\[x>A\geq 1+k_1=3>\sigma_3^{-1},\]
\[\frac1{\rho}<\frac1{A}\leq \frac1{1+k_1}=\frac13<\sigma_3,\]
which implies (\ref{eqn5:19}) by the second statement of Lemma \ref{lem:mai1}. Finally, suppose that 
\(k_1\geq 3\). Using (\ref{eqn:mai2}), (\ref{eqn:cri1}), and the third statement of Lemma \ref{lem:mai1}, 
we obtain 
\[\frac1{\rho}<\sigma_A\leq \sigma_{1+k_1}, \]
which means (\ref{eqn5:19}). Therefore, we proved Theorem \ref{thm:mai3}. 
\end{proof}
\begin{proof}[Proof of Corollary \ref{cor:mai3}]
%It is easily seen that 
The first statement of Corollary \ref{cor:mai3} follows from Theorem \ref{thm:mai3} by 
\[\sum_{m=0}^{\infty}X^{\lfloor y^m\rfloor}\in\mathcal{F}.\]
The second statement of the corollary is similarly verified by \(\xi(y,z;X)\in \mathcal{F}\). 
In fact, the second assumption on \(\mathcal{F}\) follows from the fact that, for any 
real number \(M\), 
\[\lim_{R\to\infty}\frac{\varphi(y,z;R)}{R^M}=\infty.\]
Moreover, in the same way as the proof of (\ref{eqn5:6}), 
we can show that 
\[\lim_{R\to\infty}\frac{\varphi(y,z;R+1)}{\varphi(y,z;R)}=1.\]
%Thus, the first statement of Corollary \ref{cor:mai3} follows from Theorem \ref{thm:mai3}. 
%In what follows, we prove that \(g(X):=\gamma(\rho;X)\in\mathcal{F}\), 
%using the fact which we showed in the 
%proof of Theorem \ref{thm:mai2}. In fact, \((y_1,z_1)\) and \((y_2,z_2)\) are arbitrary elements of 
%\({\Theta}\) in Theorem \ref{thm:mai2}. The first assumption on \(\mathcal{F}\) is trivial. 
%Using (\ref{eqn5:4}), we get 
%\[\lim_{R\to\infty}\log_R a(R)=\infty,\]
%which implies that \(g(X)\) satisfies the second assumption on \(\mathcal{F}\). The third assumption 
%follows from (\ref{eqn5:6}). 
\end{proof}
\begin{proof}[Proof of Corollary \ref{cor:mai4}]
By Theorem \ref{thm:mai3} and the second statement of Lemma \ref{lem:mai1}, it suffices to 
show that \((\varepsilon+1/2)A^2>\sigma_A^{-1}\), namely, 
\[0> G_A\left(\left(\frac12+\varepsilon\right)^{-1}A^{-2}\right)\]
for any sufficiently large \(A\), depending only on \(\varepsilon>0\). 
We now fix an arbitrary positive real number \(\varepsilon\). In the proof of Corollary \ref{cor:mai4}, 
the implied constant in the symbol \(O\) does not depend on \(A\), but on \(\varepsilon\). 
Observe that 
\begin{align*}
&\log \left(1-\left(\frac12+\varepsilon\right)^{-1}A^{-2}\right)^A\\
&=A\left(-\left(\frac12+\varepsilon\right)^{-1}A^{-2}+O\left(A^{-4}\right)\right)\\
&=-\left(\frac12+\varepsilon\right)^{-1}A^{-1}+O\left(A^{-3}\right)
\end{align*}
and that 
\begin{align*}
&\left(1-\left(\frac12+\varepsilon\right)^{-1}A^{-2}\right)^A\\
&= \exp\left(-\left(\frac12+\varepsilon\right)^{-1}A^{-1}+O\left(A^{-3}\right)\right)\\
&=
1-\left(\frac12+\varepsilon\right)^{-1}A^{-1}+\frac12
\left(\frac12+\varepsilon\right)^{-2}A^{-2}
+O\left(A^{-3}\right).
\end{align*}
Thus, we get 
\begin{align*}
& G_A\left(\left(\frac12+\varepsilon\right)^{-1}A^{-2}\right)\\
&= \left(1-\left(\frac12+\varepsilon\right)^{-1}A^{-2}\right)^A\\
&\hspace{20mm} -1+\left(\frac12+\varepsilon\right)^{-1}A^{-1}
-\left(\frac12+\varepsilon\right)^{-1}A^{-2}\\
&=-\varepsilon \left(\frac12 +\varepsilon\right)^{-2} A^{-2}
%\left(\frac12+\varepsilon\right)^{-1} \left(-1+\frac{1}{1+2\varepsilon}\right)A^{-2}
+O\left(A^{-3}\right)<0
\end{align*}
for any sufficiently large \(A\), depending only on \(\varepsilon\). 
\end{proof}
\section{Proof of our criteria}
\subsection{Proof of Theorem \ref{thm:cri2}}
We prove Theorem \ref{thm:cri2} by Theorem \ref{thm:cri1}, 
showing that 
\[f_1(X):=1+\sum_{m=m_0}^{\infty} X^{\lfloor a(m)\rfloor}, \ 
f_2(X):=1+\sum_{m=m_0}^{\infty} X^{\lfloor u(m)\rfloor}\]
satisfy the assumptions of Theorem \ref{thm:cri1}, 
where \(r=2\) and \(A\) is any fixed positive integer. 
%\(A\) is an arbitrary positive integer and \(r=2\). 
The first assumption is trivial. The fourth assumption 
of Theorem \ref{thm:cri1} follows from the first assumption on \(u(R)\). 
%Moreover, let \(\varepsilon\) be any positive real number. Then %the first assumption on \(a(R)\) an 
Using (\ref{qqq}) and 
the second assumption on \(u(R)\), we get the following: For any positive real number \(\varepsilon\), 
\begin{align}
\lambda\bigl(S(f_1);R\big)&\sim b(R)=o\left( R^{\varepsilon}\right)
\label{eqn6:1},\\
%\end{align}
%and that 
%\begin{align}
\lambda\bigl(S(f_2);R\big)&\sim v(R)=o\left( b(R)^{\varepsilon}\right)=
o\left( \lambda\bigl(S(f_1);R\big)^{\varepsilon}\right)
\label{eqn6:2}
\end{align}
as \(R\) tends to infinity, which implies that the third assumption on Theorem \ref{thm:cri1} holds. \par
In what follows, we check the second assumption. 
In the same way as the proof of Lemma \ref{lem5:3}, we show 
the following: 
\begin{lem}
Let \(k\) be a positive integer and \(\varepsilon\) a positive real number. 
Then we have 
\begin{align}
R-\theta\bigl(R;kS(f_1)\bigr)\ll \frac{R}{b(R)^{k-\varepsilon}}
\label{eqn6:3}
\end{align}
for any \(R\geq 1\), where the implied constant in the symbol \(\ll\) does not depend on \(R\), 
but on \(k\) and \(\varepsilon\).  
\label{lem6:1}
\end{lem}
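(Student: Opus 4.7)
The plan is to prove (\ref{eqn6:3}) by induction on \(k\), following the structure of the proof of Lemma \ref{lem5:3} but with the explicit exponent \((1-1/\rho)^k\) replaced by the slowly-decaying factor \(1/b(R)^{k-\varepsilon}\). As there, the base case rests on a mean value estimate for the gap between consecutive elements of \(S(f_1)\), and the inductive step exploits \(0\in S(f_1)\) (the coefficient of \(X^0\) in \(f_1(X)\) is \(1\)) to form \(\eta:=\theta(R;kS(f_1))+\theta(R_0;S(f_1))\in(k+1)S(f_1)\), where \(R_0:=R-\theta(R;kS(f_1))\).

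For the base case \(k=1\), given \(R\) with \(\lfloor a(m)\rfloor\leq R<\lfloor a(m+1)\rfloor\), the mean value theorem gives
\[
\lfloor a(m+1)\rfloor-\lfloor a(m)\rfloor\leq a(m+1)-a(m)+1=a'(\xi)+1
\]
for some \(\xi\in(m,m+1)\). Writing \(a'(\xi)=a(\xi)(\log a(\xi))'\) and invoking the derivative bound \((\log a(R))'<R^{-1+\varepsilon}\), one gets \(a'(\xi)\ll a(\xi)\xi^{-1+\varepsilon}\). A first consequence of the same derivative bound is \(a(m+1)/a(m)\to 1\), so that \(a(\xi)\ll a(m)\ll R\); combined with \(m\geq b(R)-1\), the gap is \(\ll R/b(R)^{1-\varepsilon}\), which is (\ref{eqn6:3}) for \(k=1\).

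For the inductive step, put \(R_0:=R-\theta(R;kS(f_1))\) and apply the inductive hypothesis with parameter \(\varepsilon/3\) to obtain \(R_0\ll R/b(R)^{k-\varepsilon/3}\). If \(R_0\) is bounded, the conclusion is trivial because \(R/b(R)^{k+1-\varepsilon}\to\infty\) by (\ref{eqn6:1}). Otherwise, exactly as in the proof of Lemma \ref{lem5:3}, \(\eta:=\theta(R;kS(f_1))+\theta(R_0;S(f_1))\) lies in \((k+1)S(f_1)\) with \(\eta<R\), so
\[
R-\theta\bigl(R;(k+1)S(f_1)\bigr)\leq R-\eta=R_0-\theta\bigl(R_0;S(f_1)\bigr),
\]
and the base case applied to \(R_0\) (with parameter \(\varepsilon/3\)) bounds the right-hand side by \(\ll R_0/b(R_0)^{1-\varepsilon/3}\).

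The main obstacle --- the only genuinely new ingredient beyond Lemma \ref{lem5:3} --- is replacing \(b(R_0)\) by \(b(R)\) in this denominator. Setting \(y:=b(R)\) and \(y_0:=b(R_0)\), the inductive bound on \(R_0\) becomes \(\log a(y_0)=\log a(y)-(k-\varepsilon/3)\log y+O(1)\). Condition (\ref{qqq}), namely \(\log a(y)/\log y\to\infty\), forces \((k-\varepsilon/3)\log y=o(\log a(y))\) and hence \(\log a(y_0)\sim\log a(y)\); combined with the ultimate monotonicity of \(\log a(y)/\log y\) and \(y_0\leq y\), this simultaneously yields \((\log a(y_0))/\log y_0\sim(\log a(y))/\log y\) and \(\log y_0\sim\log y\), so that \(b(R_0)=b(R)^{1+o(1)}\). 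Consequently \(b(R_0)^{1-\varepsilon/3}\gg b(R)^{1-2\varepsilon/3}\) for large \(R\), and substituting back gives \(R_0/b(R_0)^{1-\varepsilon/3}\ll R/b(R)^{k+1-\varepsilon}\), closing the induction. This slow-variation comparison is the delicate step, because \(b\) is only sub-polynomial, so any loss in the exponent of \(b(R)\) would destroy the bound.
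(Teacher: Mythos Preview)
Your base case and the skeleton of the induction are correct, and you correctly identify the comparison of $b(R_0)$ with $b(R)$ as the crux. But the argument you give for that comparison has a genuine gap.

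The inductive hypothesis yields only the \emph{upper} bound $R_0\ll R/b(R)^{k-\varepsilon/3}$, so the line
\[
\log a(y_0)=\log a(y)-(k-\varepsilon/3)\log y+O(1)
\]
is not an equality but merely $\log a(y_0)\le \log a(y)-(k-\varepsilon/3)\log y+O(1)$. From an upper bound on $R_0$ alone one cannot conclude $\log a(y_0)\sim\log a(y)$, and hence not $b(R_0)=b(R)^{1+o(1)}$. Your dichotomy ``$R_0$ bounded'' versus ``$R_0$ unbounded'' is too coarse: nothing prevents, say, $R_0\asymp(\log R)^{C}$ along a sequence of $R$'s, in which case $b(R_0)$ is essentially bounded while $b(R)\to\infty$, and the inequality $b(R_0)^{1-\varepsilon/3}\gg b(R)^{1-2\varepsilon/3}$ fails badly.

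The paper closes this gap with a sharper case split. If $R_0<R/b(R)^{k+1}$, then because $0\in S(f_1)$ one has $\theta\bigl(R;kS(f_1)\bigr)\in(k+1)S(f_1)$, so
\[
R-\theta\bigl(R;(k+1)S(f_1)\bigr)\le R_0<\frac{R}{b(R)^{k+1}},
\]
and the bound is immediate. In the complementary case $R_0\ge R/b(R)^{k+1}$, the hypothesis $b(R)=o(R^{\varepsilon'})$ (from~(\ref{eqn6:1})) gives the crucial lower bound $R_0\ge R^{1-\varepsilon/4}$. Now the ultimate monotonicity of $(\log a(x))/\log x$ applied at $x=b(R)$ and $x'=b(R^{1-\varepsilon/4})$ yields
\[
b\bigl(R^{1-\varepsilon/4}\bigr)\ge b(R)^{1-\varepsilon/4},
\]
whence $b(R_0)^{1-\varepsilon/4}\ge b(R)^{(1-\varepsilon/4)^2}\ge b(R)^{1-\varepsilon/2}$, and the induction closes. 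Your slow-variation heuristic is morally right, but it needs this quantitative lower bound on $R_0$ to become a proof.
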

\begin{proof}
It suffices to show for each \(k\geq 1\) that, for any \(\varepsilon>0\), (\ref{eqn6:3}) holds for any sufficiently large \(R\), depending on \(k\) and \(\varepsilon\). 
We prove the lemma by induction on \(k\). \par
We first consider the case of \(k=1\). We may assume that \(\varepsilon<1\). 
%Let \(k=1\). 
By the second assumption on 
\(a(m)\) and the mean value theorem, we get for any sufficiently large \(m\) that 
\[a(m)\leq a(m+1)\leq 2 a(m)\]
%for any sufficiently large integer \(m\). By the second assumption and the mean value theorem again, 
and that 
there exists a real number \(\rho\) with  \(0<\rho<1\) satisfying 
\begin{align}
a(m+1)-a(m)&=a'(m+\rho)%<\frac{a(m+\rho)}{m^{1-\varepsilon}}\ll \frac{a(m)}{(m+1)^{1-\varepsilon}}.
<\frac{a(m+\rho)}{(m+\rho)^{1-\varepsilon}}\nonumber\\
&\leq \frac{a(m+1)}{m^{1-\varepsilon}}\ll \frac{a(m)}{(m+1)^{1-\varepsilon}}.
\label{eqn6:4}
\end{align}
For any sufficiently large \(R\), there exists an integer \(m\geq m_0\) such that 
\[\lfloor a(m)\rfloor < R\leq \lfloor a(m+1)\rfloor .\]
By (\ref{eqn6:4}), we obtain 
\begin{align*}
R-\theta\bigl(R;S(f_1)\bigr)
& = R- \lfloor a(m)\rfloor\leq a(m+1)-a(m)+1\\
& \ll \frac{a(m)}{(m+1)^{1-\varepsilon}} \ll \frac{R}{b(a(m+1))^{1-\varepsilon}}
\leq \frac{R}{b(R)^{1-\varepsilon}},
\end{align*}
which implies (\ref{eqn6:3}) in the case of \(k=1\). \par
Next we assume that (\ref{eqn6:3}) holds for a fixed positive integer \(k\) and an arbitrary positive 
real number \(\varepsilon\). In what follows, we verify (\ref{eqn6:3}) for \(k+1\) 
with fixed \(\varepsilon<1\). 
%Without loss of generality, we may assume that \(0<\varepsilon<1\). 
Put 
\[R_0:=R-\theta\bigl(R;kS(f_1)\bigr). \]
It suffices to consider the case of 
\begin{align}
R_0\geq \frac{R}{b(R)^{k+1}}.
\label{eqn6:5}
\end{align}
In fact, suppose that (\ref{eqn6:5}) does not hold. Since \(0\in S(f_1)\) by the definition of \(f_1(X)\), 
we have 
\[\theta\bigl(R;kS(f_1)\bigr)\in k S(f_1)\subset (k+1) S(f_1)\]
by Remark \ref{rem:cri1}. Thus, we get 
\[R-\theta\bigl(R;(k+1)S(f_1)\bigr)\leq R_0 <\frac{R}{b(R)^{k+1}},\]
which implies (\ref{eqn6:3}). \par
In what follows, we assume that (\ref{eqn6:5}) is satisfied. In particular, 
applying (\ref{eqn6:1}) to (\ref{eqn6:5}), we see 
\begin{align}
R_0\geq R^{1-\varepsilon/4}
\label{eqn6:6}
\end{align}
for any sufficiently large \(R\). Moreover, the inductive hypothesis implies that 
\begin{align}
R_0\ll \frac{R}{b(R)^{k-\varepsilon/2}}.
\label{eqn6:7}
\end{align}
In the same way as the proof of Lemma \ref{lem5:3}, putting 
\[\eta:= \theta\bigl(R;kS(f_1)\bigr)+\theta\bigl(R_0;S(f_1)\bigr)\in (k+1) S(f_1),\]
we see that 
\begin{align*}
R-\theta\bigl(R;(k+1)S(f_1)\bigr)& \leq R-\eta\\
&=R_0-\theta\bigl(R_0;S(f_1)\bigr)\ll \frac{R_0}{b(R_0)^{1-\varepsilon/4}},
\end{align*}
%by (\ref{eqn6:3}) in the case of \(k=1\). 
where for the last inequality we apply (\ref{eqn6:3}) with \(k=1\). 
By (\ref{eqn6:6}) and (\ref{eqn6:7}), we obtain 
\begin{align}
R-\theta\bigl(R;(k+1)S(f_1)\bigr)\ll \frac{R}{b(R)^{k-\varepsilon/2}b(R^{1-\varepsilon/4})^{1-\varepsilon/4}}.
\label{eqn6:8}
\end{align}
Using the assumption that \((\log a(x))/(\log x)\) is ultimately increasing with 
\[x=b(R)> x'= b(R^{1-\varepsilon/4}),\]
we get 
\begin{align*}
\frac{\log R}{\log b(R)}&=\frac{\log a(x)}{\log x}\geq \frac{\log a(x')}{\log x'}\\
&=\left(1-\frac{\varepsilon}{4}\right) \frac{\log R}{\log b(R^{1-\varepsilon/4})}.
\end{align*}
Consequently, 
\[b\left( R^{1-\varepsilon/4}\right)\geq b(R)^{1-\varepsilon/4},\]
and so 
\begin{align}
\frac{1}{b(R^{1-\varepsilon/4})^{1-\varepsilon/4}}\leq \frac{1}{b(R)^{(1-\varepsilon/4)^2}}
\leq \frac{1}{b(R)^{1-\varepsilon/2}}
\label{eqn6:9}
\end{align}
by \((1-\varepsilon/4)^2\geq 1-\varepsilon/2\). Combining (\ref{eqn6:8}) and (\ref{eqn6:9}), we 
deduce that 
\begin{align*}
(0<)R-\theta\bigl(R;(k+1)S(f_1)\bigr)\ll \frac{R}{b(R)^{k+1-\varepsilon}},
\end{align*}
which implies (\ref{eqn6:3}). 
\end{proof}
%In the same way as the proof of Theorem \ref{thm:cri2}, the third assumption of Theorem \ref{thm:cri1} follows from Lemma \ref{lem6:1} and (\ref{eqn6:2}). Therefore, 
Let \(k_1,k_2\) be nonnegative integers. Applying Lemma \ref{lem6:1} with 
\(k=1+k_1\) and \(\varepsilon=1/2\), we deduce 
by (\ref{eqn6:2}) that 
\begin{align*}
R-\theta(R;(1+k_1) S(f_1)) & \ll \frac{R}{b(R)^{k_1+1/2}}\\
&=o\left(\frac{R}{b(R)^{k_1} v(R)^{k_2}}\right)=o\left(\frac{R}{\prod_{h=1}^2 \lambda(S(f_h);R)^{k_h}}\right)
\end{align*}
as \(R\) tends to infinity. 
Finally, we proved Theorem \ref{thm:cri2}. 
\subsection{Proof of Theorem \ref{thm:cri1}}
Put 
\begin{align*}
\overline{f_i}(X):=\left\{
\begin{array}{cc}
f_i(X) & \mbox{if }f_i(0)\ne 0, \\
1+f_i(X) & \mbox{if }f_i(0)=0.
\end{array}
\right.
\end{align*}
Then \(\overline{f_1}(X),\ldots,\overline{f_r}(X)\) satisfy the assumptions of Theorem \ref{thm:cri1}. 
The first and fourth assumptions are easily checked. %Note for any \(i=1,\ldots,r\) that 
%\[S(\widetilde{f_i})=S(f_i)\cup \{0\}, \lambda(f_i;R)\leq \lambda(\widetilde{f_i};R)
%\leq 1+\lambda(f_i;R).\]
%In particular, \(\widetilde{f_1}(X),\ldots,\widetilde{f_r}(X)\) 
%fulfill the third assumption of Theorem \ref{thm:cri1}. 
%Moreover, the second assumption of Theorem \ref{thm:cri1} follows from 
Moreover, the second and the third assumptions are also seen by 
\begin{align*}
&\theta\left(R;\sum_{h=1}^{r-2} 
k_h S \hspace{-0.7mm}\left( \hspace{0.3mm}\overline{f_h}\hspace{0.3mm}\right)+
(1+k_{r-1})S\hspace{-0.7mm}\left(\hspace{0.3mm}\overline{f_{r-1}}\hspace{0.3mm}\right)\right)\\
&\hspace{10mm}\geq \theta\left(R;\sum_{h=1}^{r-2} k_h S(f_h)+(1+k_{r-1})S(f_{r-1})\right)
\end{align*}
and, for \(h=1,\ldots,r\), 
%\[\frac{R}{\prod_{h=1}^r \lambda\left(S\left(\widetilde{f_h}\right);R\right)^{k_h}}
%\sim \frac{R}{\prod_{h=1}^r \lambda(S(f_h);R)^{k_h}}\]
\[\lambda\left(S\left(\hspace{0.3mm}\overline{f_h}\hspace{0.3mm}\right);R\right) \sim \lambda(S(f_h);R)\]
as \(R\) tends to infinity. For the proof of Theorem \ref{thm:cri1}, 
it suffices to show that 
\[\left\{\left. \overline{f_1}(\beta^{-1})^{k_1}
\overline{f_2}(\beta^{-1})^{k_2}\cdots \overline{f_r}(\beta^{-1})^{k_r}
\ \right| \ k_1,k_2,\ldots, k_r\in\mathbb{N}, k_1\leq A\right\}\]
is linearly independent over \(\mathbb{Q}(\beta)\). In particular, rewriting \(\overline{f_i}(X)\) by 
\(f_i(X)\) for \(i=1,\ldots,r\), we may assume that \(f_i(0)\ne 0\) for any \(i=1,\ldots, r\). \par
For simplicity, put, for \(i=1,\ldots,r\), 
\[\xi_i:=f_i(\beta^{-1}), S_i:=S(f_i), \lambda_i(R):=\lambda\bigl(S(f_i);R\bigr).\]
Using Corollary \ref{cor:tra1} and the third assumption of Theorem \ref{thm:cri2}, we see that 
\[[\mathbb{Q}(\xi_1,\beta):\mathbb{Q}(\beta)]\geq A+1\]
and that \(\xi_2,\ldots,\xi_r\) are transcendental. \par
We introduce notation for the proof of Theorem \ref{thm:cri1}. 
For any nonempty subset \(\mathcal{A}\) of \(\mathbb{N}\) and any positive integer \(k\), let 
\(\mathcal{A}^k\) denote the \(n\)-fold Cartesian product. For convenience, set 
\[\mathcal{A}^0:=\{0\}.\]
Let \(k\in \mathbb{N}\) and \(\bp=(p_1,\ldots,p_k)\in \mathbb{N}^k\). We put 
\begin{align*}
|\bp|:=\left\{
\begin{array}{cc}
0 & (k=0), \\
p_1+\cdots+p_k & (k\geq 1)
\end{array}
\right.
\end{align*}
and, for \(i=1,\ldots, r\), 
\begin{align*}
t_i (\bp):=
\left\{
\begin{array}{cc}
1 & (k=0), \\
t_i(p_1)\cdots t_i(p_k) & (k\geq 1).
\end{array}
\right.
\end{align*}
Moreover, for any \(\bk=(k_1,\ldots,k_r)\in \mathbb{N}^r\), let 
\[\underline{X}^{\bk}=\prod_{i=1}^{r} X_i^{k_i}, \ 
\underline{\xi}^{\bk}:=\prod_{i=1}^r \xi_i^{k_i}, \ \underline{\lambda}(N)^{\bk}:=
\prod_{i=1}^r \lambda_i (N)^{k_i}. \]
We calculate \(\underline{\xi}^{\bk}\) in the same way as the proof of Theorem 2.1 in \cite{kan1}. 
The method was inspired by the proof of Theorem 7.1 in \cite{bai}. 
Let \(\bk\in\mathbb{N}^r\backslash\{(0,\ldots,0)\}\). Then we have 
\begin{align}
\underline{\xi}^{\bk}
& =
\prod_{i=1}^r \left(\sum_{m_i\in S_i} t_i(m_i) \beta^{-m_i}\right)^{k_i}
\nonumber\\
&=\prod_{i=1}^r\sum_{\bm_i\in S_i^{k_i}} t_i(\bm_i) \beta^{-|\bm_i|}
=:
\sum_{m=0}^{\infty} \beta^{-m} \rho(\bk;m),
\label{pr1}
\end{align}
where 
\begin{align*}
\rho(\bk;m)=\sum_{\bm_1\in S_1^{k_1},\ldots,\bm_r\in S_r^{k_r}\atop 
|\bm_1|+\cdots+|\bm_r|=m} t_1(\bm_1)\cdots t_r(\bm_r)\in\mathbb{N}. 
\end{align*}
Note that \(\rho(\bk;m)\) is positive if and only if 
\[m\in \sum_{h=1}^r k_h S_h.\]
We see that 
\begin{align}
\rho(\bk;m)\leq 
\sum_{\bm_1\in S_1^{k_1},\ldots,\bm_r\in S_r^{k_r}\atop 
|\bm_1|+\cdots+|\bm_r|=m} C_7^{|\bk|} \leq C_7^{|\bk|} (1+m)^{|\bk|}.
\label{pr2}
\end{align}
We give an analogue of Lemma 4.1 in \cite{kan1}. 
\begin{lem}
Let \(\bk\in \mathbb{N}^r\backslash \{(0,\ldots,0)\}\) and let \(N\in\mathbb{Z}^{+}\). 
Then we have 
\begin{align}
\sum_{m=0}^{N-1}\rho(\bk;m)\leq C_7^{|\bk|} \underline{\lambda}(N)^{\bk}
\label{pr3}
\end{align}
and 
\begin{align}
\mbox{Card}\hspace{0.6mm} \{m\in\mathbb{N}\mid 
m< N, \rho(\bk;m)>0\}\leq C_7^{|\bk|} \underline{\lambda}(N)^{\bk}.
\label{pr4}
\end{align}
\label{prlem1}
\end{lem}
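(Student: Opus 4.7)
The plan is to deduce both inequalities from a single combinatorial observation. Whenever $\bm_i\in S_i^{k_i}$ for $i=1,\ldots,r$ and $|\bm_1|+\cdots+|\bm_r|<N$, the fact that every entry of every $\bm_i$ is a nonnegative integer forces each such entry to lie in $S_i\cap[0,N)$. Hence every admissible tuple $(\bm_1,\ldots,\bm_r)$ belongs to $\prod_{i=1}^r (S_i\cap[0,N))^{k_i}$, a set of cardinality $\prod_{i=1}^r \lambda_i(N)^{k_i}=\underline{\lambda}(N)^{\bk}$. This bookkeeping is the engine of the whole argument and is entirely parallel to Lemma 4.1 in \cite{kan1}.

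For (\ref{pr3}) I would interchange the order of summation to write
\[
\sum_{m=0}^{N-1}\rho(\bk;m)=\sum_{\bm_1\in S_1^{k_1},\ldots,\bm_r\in S_r^{k_r}\atop |\bm_1|+\cdots+|\bm_r|<N} t_1(\bm_1)\cdots t_r(\bm_r).
\]
The first assumption of Theorem \ref{thm:cri1} together with the definition $t_i(\bm_i)=t_i(p_1)\cdots t_i(p_{k_i})$ gives $t_i(\bm_i)\leq C_7^{k_i}$, so each summand is bounded by $C_7^{|\bk|}$. Combining this with the cardinality bound from the first paragraph yields (\ref{pr3}).

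For (\ref{pr4}), I would observe that $\rho(\bk;m)>0$ precisely when $m$ lies in the image of the sum map $(\bm_1,\ldots,\bm_r)\mapsto |\bm_1|+\cdots+|\bm_r|$ on $S_1^{k_1}\times\cdots\times S_r^{k_r}$; restricting to $m<N$ confines us to tuples in $\prod_{i=1}^r (S_i\cap[0,N))^{k_i}$ by the same positivity argument. Bounding the cardinality of the image by that of the domain produces $\underline{\lambda}(N)^{\bk}$, which is at most $C_7^{|\bk|}\underline{\lambda}(N)^{\bk}$ since $C_7\geq 1$; this last inequality is automatic, because the $t_i$ are integer-valued and each $f_i$ is nonpolynomial, so at least one coefficient is a positive integer. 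The lemma is therefore pure combinatorial accounting, and I do not anticipate any substantive obstacle; the only step requiring a moment of care is the positivity reduction to $\prod_i (S_i\cap[0,N))^{k_i}$.
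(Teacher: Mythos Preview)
Your argument is correct and matches the paper's proof of (\ref{pr3}) essentially verbatim: interchange summation, bound each product $t_1(\bm_1)\cdots t_r(\bm_r)$ by $C_7^{|\bk|}$, and confine the tuples to $\prod_i (S_i\cap[0,N))^{k_i}$. For (\ref{pr4}) the paper takes a marginally shorter route, observing that since each $\rho(\bk;m)$ is a nonnegative integer the count of positive terms is bounded by their sum, so (\ref{pr4}) follows immediately from (\ref{pr3}); your direct image-versus-domain argument is equally valid and in fact yields the sharper bound $\underline{\lambda}(N)^{\bk}$ before you pad with $C_7^{|\bk|}$.
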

\begin{proof}
We see that (\ref{pr4}) follows from (\ref{pr3}) because \(\rho(\bk;m)\in\mathbb{N}\) for any 
\(m\). Put \(S(i;N):=S_i\cap [0,N)\) for \(i=1,\ldots,r\). Then we get 
\begin{align*}
\sum_{m=0}^{N-1} \rho(\bk;m)
&=\sum_{\bm_1\in S_1^{k_1},\ldots,\bm_r\in S_r^{k_r}\atop 
|\bm_1|+\cdots+|\bm_r|< N} t_1(\bm_1)\cdots t_r(\bm_r)\\
&\leq 
C_7^{|\bk|}\sum_{\bm_1\in S(1;N)^{k_1}}\sum_{\bm_2\in S(2;N)^{k_2}}
\cdots \sum_{\bm_r\in S(r;N)^{k_r}} 1\\
&= C_7^{|\bk|} \underline{\lambda}(N)^{\bk},
\end{align*}
which implies (\ref{pr3}). 
\end{proof}
Assume that the set \(\{\underline{\xi}^{\bk}\mid \bk=(k_1,\ldots,k_r)\in \mathbb{N}^r, k_1\leq A\}\) 
is linearly independent over \(\mathbb{Q}(\beta)\). Then there exists 
\(P(X_1,\ldots,X_r)\in \mathbb{Z}[\beta][X_1,\ldots,X_r]\backslash\mathbb{Z}[\beta]\) such that 
the degree of \(P(X_1,\ldots,X_r)\) in \(X_1\) is at most \(A\) and that 
\begin{align}
P(\xi_1,\ldots,\xi_r)=0. 
\label{pr5}
\end{align}
Let \(D\) be the total degree of \(P(X_1,\ldots,X_r)\). 
Without loss of generality, we may assume that \(X_r(-1+X_r)\) divides \(P(X_1,\ldots,X_r)\) and 
that if \(r\geq 3\), then \(X_{r-1}\) divides \(P(X_1,\ldots,X_r)\). Put 
\begin{align}
P(X_1,\ldots,X_r)=:\sum_{\bk\in \Lambda} A_{\bk} \underline{X}^{\bk}, 
\label{pr6}
\end{align}
where \(\Lambda\) is a nonempty finite subset of \(\mathbb{N}^r\) and 
\(A_{\bk}\in \mathbb{Z}[\beta]\backslash \{0\}\) for any \(\bk\in \Lambda\). 
For any \(\bk=(k_1,\ldots,k_r) \in \Lambda\), we have \(k_r\geq 1\) because 
\(X_r\) divides \(P(X_1,\ldots,X_r)\). Moreover, if \(r\geq 3\), then 
\begin{align}
k_{r-1}\geq 1
\label{pr7}
\end{align}
because \(X_{r-1}\) divides \(P(X_1,\ldots,X_r)\). \par
The lexicographic order \(\succ\) on \(\mathbb{N}^r\) is defined as follows: 
Let \(\bk=(k_1,\ldots,k_r)\) and \(\bk'=(k_1',\ldots,k_r')\) be distinct 
elements of \(\mathbb{N}^r\). 
Put \(l:=\min \{i\mid 1\leq i\leq r, k_i\ne k_i'\}\). Then \(\bk\succ \bk'\) if and only if 
\(k_l>k_l'\). The third assumption of Theorem \ref{thm:cri1} implies that if \(\bk\succ \bk'\), then 
\begin{align}
\underline{\lambda}(N)^{\bk'}=o\left(\underline{\lambda}(N)^{\bk}\right)
\label{pr8}
\end{align}
as \(N\) tends to infinity. \par
Let \(\bg=(g_1,\ldots,g_r)\) be the greatest element of \(\Lambda\) with respect to \(\succ\). 
Without loss of generality, we may assume that 
\begin{align}
A_{\bg}\geq 1.
\label{pr9}
\end{align}
We see that  
\begin{align}
g_{r-1}\geq 1.
\label{pr10}
\end{align}
In fact, (\ref{pr10}) follows from (\ref{pr7}) if \(r\geq 3\). Suppose that \(r=2\). Then \(g_1\) is the 
degree of \(P(X_1,X_2)\) in \(X_1\). Thus, \(g_1\) is positive because \(\xi_2\) is transcendental. \par
Putting 
\[\Lambda_1:=\{\bk=(k_1,\ldots,k_{r-1},k_r)\mid k_1=g_1,\ldots,k_{r-1}=g_{r-1},k_r<g_r\}\]
and 
\[\Lambda_2:=\{\bk=(k_1,\ldots,k_{r-1},k_r)\mid k_i< g_i \mbox{ for some }i\leq r-1\},\]
we see \(\Lambda=\{\bg\}\cup \Lambda_1\cup\Lambda_2\). Using the fact that \(\xi_r\) is 
transcendental and that \(-1+X_r\) divides \(P(X_1,\ldots,X_r)\), we obtain the following lemma, 
applying the same method as the proof of 
Lemma 4.3 in \cite{kan1} with \(F(X_{r-1},X_r)=1\): 
\begin{lem}
\(\Lambda_1\) and \(\Lambda_2\) are not empty. 
\label{prlem2}
\end{lem}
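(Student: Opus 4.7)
The plan is to prove the two nonemptiness statements separately, both by contradiction, exploiting the two WLOG arrangements already in place: that $X_r(-1+X_r)$ divides $P$, and that $\bg$ is the lexicographic maximum of $\Lambda$ with $A_{\bg}\geq 1$.

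For $\Lambda_1\neq\emptyset$ the plan is to substitute $X_r=1$. Because $X_r-1$ divides $P$, we obtain the polynomial identity $P(X_1,\ldots,X_{r-1},1)=0$ in $\mathbb{Z}[\beta][X_1,\ldots,X_{r-1}]$, so in particular the coefficient of $X_1^{g_1}\cdots X_{r-1}^{g_{r-1}}$ must vanish. The $\bk\in\Lambda$ contributing to this coefficient are exactly those with $(k_1,\ldots,k_{r-1})=(g_1,\ldots,g_{r-1})$; by the lexicographic maximality of $\bg$, none of these can have $k_r>g_r$, so the contributing indices are precisely $\bg$ itself together with the elements of $\Lambda_1$. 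This yields $A_{\bg}+\sum_{\bk\in\Lambda_1}A_{\bk}=0$, and if $\Lambda_1$ were empty this would force $A_{\bg}=0$, contradicting $A_{\bg}\geq 1$.

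For $\Lambda_2\neq\emptyset$ the plan is to invoke the transcendence of $\xi_r$ over $\mathbb{Q}(\beta)$. If $\Lambda_2=\emptyset$, then every $\bk\in\Lambda$ shares its first $r-1$ coordinates with $\bg$, so $P$ factors as
\[P(X_1,\ldots,X_r)=X_1^{g_1}\cdots X_{r-1}^{g_{r-1}}\,Q(X_r),\]
where $Q(X_r)\in\mathbb{Z}[\beta][X_r]$ is nonzero because it still carries the monomial $A_{\bg}X_r^{g_r}$. Evaluating $P(\xi_1,\ldots,\xi_r)=0$ and using $\xi_i\geq t_i(0)\geq 1$ for each $i$ (a consequence of the earlier reduction to $f_i(0)\neq 0$ together with the nonnegativity of the coefficients of the $f_i$), we conclude $Q(\xi_r)=0$. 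This makes $\xi_r$ algebraic over $\mathbb{Q}(\beta)$; since $\beta$ is algebraic, this is equivalent to being algebraic over $\mathbb{Q}$, contradicting the transcendence of $\xi_r$ recorded immediately before the statement of the lemma.

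Neither half is delicate. The one point that has to be handled cleanly is the first argument's combinatorial observation that lex-maximality of $\bg$ precludes $k_r>g_r$ whenever the first $r-1$ coordinates agree with those of $\bg$, which is precisely how $\Lambda_1$ was defined; in the second half, the only input beyond algebra is the transcendence of $\xi_r$ plus positivity of the $\xi_i$. I do not anticipate a substantive obstacle.
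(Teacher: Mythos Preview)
Your proposal is correct and follows essentially the same route as the paper: the paper also derives nonemptiness of $\Lambda_1$ from the divisibility $(X_r-1)\mid P$ and nonemptiness of $\Lambda_2$ from the transcendence of $\xi_r$, citing Lemma~4.3 of \cite{kan1} for the details. Your substitution $X_r=1$ and factorization argument are exactly the expected implementations of those two inputs.
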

Set 
\[\be=(g_1,\ldots,g_{r-2},-1+g_{r-1},1+D).\]
Recall that the degree \(g_1\) of \(P(X_1,\ldots,X_r)\) in \(X_1\) is at most \(A\). Thus, we 
can apply the second assumption of Theorem \ref{thm:cri1} with \(\bk=(k_1,\ldots,k_r)=\be\). 
In fact, we see 
\begin{align*}
k_1=\left\{
\begin{array}{cc}
-1+g_1 & (\mbox{if }r=2),\\
g_1 & (\mbox{if }r\geq 3).
\end{array}
\right.
\end{align*}
Hence, there exits a positive constant \(C_{12}\) satisfying the following: 
For any integer \(R\) with \(R\geq C_{12}\), we have 
\begin{align}
\lambda_r(R)\geq 5
\label{pr11}
\end{align}
and 
\begin{align}
R-\theta\left(\sum_{h=1}^{r-1} g_h S_h;R\right)< \frac{R}{\underline{\lambda}(R)^{\be}}.
\label{pr12}
\end{align}
In what follows, we set 
\[\theta(R):=\theta\left(\sum_{h=1}^{r-1} g_h S_h;R\right)\]
for simplicity. 
Using (\ref{pr11}) and (\ref{pr12}), we obtain the following lemma in the same way as the proof of 
Lemma 4.4 in \cite{kan1}: 
\begin{lem}
Let \(M,E\) be real numbers with 
\[M\geq C_{12}, E\geq \frac{4M}{\underline{\lambda}{(M)}^\be}.\]
Then 
\[M+\frac12E<\theta(M+E).\]
\label{prlem3}
\end{lem}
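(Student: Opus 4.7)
The plan is to establish $\theta(M+E) > M + \tfrac12 E$ by showing the equivalent bound $(M+E) - \theta(M+E) < \tfrac12 E$. Invoking (\ref{pr12}) at an integer within distance one of $M+E$, which is permitted since $M+E \geq M \geq C_{12}$, the target reduces to verifying
\[
\frac{M+E}{\underline{\lambda}(M+E)^{\be}} \leq \frac{E}{2}.
\]

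The key structural observation is that every coordinate of $\be = (g_1,\ldots,g_{r-2},-1+g_{r-1},1+D)$ is nonnegative: the first $r-2$ lie in $\mathbb{N}$; $-1+g_{r-1} \geq 0$ by (\ref{pr10}); and $1+D \geq 1$. Consequently $\underline{\lambda}(R)^{\be}$ is non-decreasing in $R$, so $\underline{\lambda}(M+E)^{\be} \geq \underline{\lambda}(M)^{\be}$. Moreover, (\ref{pr11}) together with the exponent $1+D \geq 1$ on $\lambda_r$ yields $\underline{\lambda}(M)^{\be} \geq 5$, after enlarging $C_{12}$ if necessary so that the remaining factors $\lambda_i(M)^{e_i}$ are at least $1$ (possible since each $S_i$ is infinite).

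With these ingredients I would conclude by the split
\[
\frac{M+E}{\underline{\lambda}(M+E)^{\be}} \leq \frac{M}{\underline{\lambda}(M)^{\be}} + \frac{E}{\underline{\lambda}(M)^{\be}} \leq \frac{E}{4} + \frac{E}{4} = \frac{E}{2},
\]
using the hypothesis $E \geq 4M/\underline{\lambda}(M)^{\be}$ for the first summand and $\underline{\lambda}(M)^{\be} \geq 4$ for the second. The only fiddly point I anticipate is the passage from the integer-indexed bound (\ref{pr12}) to the real parameter $M+E$; since the resulting rounding error is at most $1$ and is dominated by $E/2$ (by enlarging $C_{12}$ once more, so that $E$, which is at least a positive constant times $M/\underline{\lambda}(M)^{\be}$ but also at least a fixed lower bound when $M$ is large, comfortably exceeds any constant), this is cosmetic rather than a real obstacle. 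The whole argument faithfully follows the blueprint of Lemma~4.4 in \cite{kan1}.
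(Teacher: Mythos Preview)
Your argument is correct and matches the intended proof (which the paper defers to Lemma~4.4 of \cite{kan1}): apply (\ref{pr12}) at $M+E$, use that $\underline{\lambda}(\cdot)^{\be}$ is nondecreasing because every entry of $\be$ is nonnegative by (\ref{pr10}), and split $(M+E)/\underline{\lambda}(M)^{\be}$ into two pieces bounded respectively by the hypothesis on $E$ and by (\ref{pr11}). Your caveats about enlarging $C_{12}$ are in fact unnecessary---the paper has already arranged $0\in S_i$, so $\lambda_i(M)\geq 1$ automatically, and (\ref{pr12}) actually holds for real $R\geq C_{12}$ since it is extracted from the asymptotic (\ref{ppp}), so no rounding is needed.
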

Using \(k_1\leq A\) and the third assumption of Theorem \ref{thm:cri1}, we get 
\[\lim_{R\to\infty}\frac{R}{\underline{\lambda}(R)^{\be}}=\infty.\]
Thus, the set 
\[\Xi:=\left\{
N\in\mathbb{N}\left|
\frac{N}{\underline{\lambda}(N)^{\be}}\geq \frac{n}{\underline{\lambda}(n)^{\be}}
\mbox{ for any }n\leq N\right.\right\}
\]
is infinite. We now verify for any \(\bk=(k_1,\ldots,k_r) \in\Lambda_2\) that 
\begin{align}
\underline{\lambda}(N)^{\bk}=o\left(\underline{\lambda}(N)^{\be}\right)
\label{pr13}
\end{align}
as \(N\) tends to infinity. For the proof of (\ref{pr13}), it suffices to check 
\begin{align}
\be\succ \bk
\label{pr14}
\end{align}
by (\ref{pr8}). 
%Put \(\bk=(k_1,\ldots,k_r)\). 
If \(g_i>k_i\) for some \(i\leq r-2\), then (\ref{pr14}) holds. Suppose that 
\(g_i=k_i\) for any \(i\leq r-2\). Then we get \(-1+g_{r-1}\geq k_{r-1}\) and \(1+D>k_r\) by 
\(\bk\in \Lambda_2\), which implies (\ref{pr14}). \par
Combining (\ref{pr5}), (\ref{pr6}), and (\ref{pr1}), we get 
\[0=\sum_{\bk\in \Lambda} A_{\bk} \underline{\xi}^{\bk}
=\sum_{\bk\in \Lambda} A_{\bk} \sum_{m=0}^{\infty} \rho(\bk;m)\beta^{-m}.\]
For an arbitrary nonnegative integer \(R\), multiplying \(\beta^R\) to the both-hand sides of the equality above, 
we obtain 
\[0=\sum_{\bk\in \Lambda} A_{\bk} \sum_{m=-R}^{\infty} \rho(\bk;m+R)\beta^{-m}.\]
Putting 
\begin{align}
Y_R&:=
\sum_{\bk\in \Lambda} A_{\bk} \sum_{m=1}^{\infty} \rho(\bk;m+R)\beta^{-m}\nonumber \\
&=
-
\sum_{\bk\in \Lambda} A_{\bk} \sum_{m=-R}^{0} \rho(\bk;m+R)\beta^{-m},
\label{pr15}
\end{align}
we see that \(Y_R\) is an algebraic integer because \(\beta\) is a Pisot or Salem number. 
\begin{lem}
There exist positive integers \(C_{13}\) and \(C_{14}\) satisfying the following: 
For any integer \(R\) with \(R\geq C_{14}\), we have 
\[Y_R=0\mbox{, or }|Y_R|\geq R^{-C_{13}}.\]
\label{prlem4}
\end{lem}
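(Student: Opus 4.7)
The plan is to exploit the Pisot/Salem hypothesis on $\beta$ via a Liouville-style norm argument on the algebraic integer $Y_R$. Starting from the second representation in (\ref{pr15}), the substitution $n=-m$ rewrites
\[Y_R=-\sum_{\bk\in\Lambda}A_{\bk}\sum_{n=0}^{R}\rho(\bk;R-n)\beta^{n}.\]
Since each coefficient $A_{\bk}$ lies in $\mathbb{Z}[\beta]$ and each $\rho(\bk;R-n)$ is a nonnegative rational integer, $Y_R$ is an algebraic integer lying in the number field $\mathbb{Q}(\beta)$; set $d:=[\mathbb{Q}(\beta):\mathbb{Q}]$.

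Denote the conjugates of $\beta$ over $\mathbb{Q}$ by $\beta=\beta^{(1)},\ldots,\beta^{(d)}$, and for each $\bk\in\Lambda$ write $A_{\bk}^{(i)}$ for the image of $A_{\bk}$ under the embedding $\mathbb{Q}(\beta)\hookrightarrow\mathbb{C}$ sending $\beta\mapsto\beta^{(i)}$. For $i\geq 2$ the Pisot/Salem hypothesis gives $|\beta^{(i)}|\leq 1$, and combined with (\ref{pr2}) I obtain
\[\bigl|Y_R^{(i)}\bigr|\leq\sum_{\bk\in\Lambda}\bigl|A_{\bk}^{(i)}\bigr|\sum_{n=0}^{R}\rho(\bk;R-n)\ll (R+1)^{1+E},\]
where $E:=\max_{\bk\in\Lambda}|\bk|$ and the implied constant depends only on $P$, $\beta$, and $C_{7}$, not on $R$.

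If $Y_R\neq 0$, then $N_{\mathbb{Q}(\beta)/\mathbb{Q}}(Y_R)=\prod_{i=1}^{d}Y_R^{(i)}$ is a nonzero rational integer and hence has absolute value at least $1$. Rearranging,
\[|Y_R|\geq\Bigl(\prod_{i=2}^{d}\bigl|Y_R^{(i)}\bigr|\Bigr)^{-1}\gg R^{-(d-1)(E+1)},\]
so any integer $C_{13}>(d-1)(E+1)$ together with a suitably large $C_{14}$ yields the conclusion.

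The only point requiring a moment's care is the Salem case, where some conjugates of $\beta$ lie on the unit circle and contribute no decay; but the bound $|\beta^{(i)}|^{n}\leq 1$ is already enough, since (\ref{pr2}) shows that the coefficients $\rho(\bk;R-n)$ themselves grow only polynomially in $R$. Accordingly I do not anticipate any serious obstacle.
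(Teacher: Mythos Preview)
Your proposal is correct and follows essentially the same approach as the paper: both rewrite $Y_R$ via the finite sum in (\ref{pr15}), bound the non-identity conjugates using $|\beta^{(i)}|\leq 1$ together with the polynomial estimate (\ref{pr2}), and then invoke the norm inequality $\bigl|N_{\mathbb{Q}(\beta)/\mathbb{Q}}(Y_R)\bigr|\geq 1$ for a nonzero algebraic integer. The only cosmetic difference is that the paper uses the total degree $D$ in place of your $E=\max_{\bk\in\Lambda}|\bk|$ (and of course $E\leq D$), leading to the explicit choice $C_{13}=(D+2)(d-1)$.
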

\begin{proof}
Let \(d\) be the degree of \(\beta\) and let \(\sigma_1,\sigma_2,\ldots,\sigma_d\) be the 
conjugate embeddings of \(\mathbb{Q}(\beta)\) into \(\mathbb{C}\) such that 
\(\sigma_1(\gamma)=\gamma\) for any \(\gamma \in \mathbb{Q}(\beta)\). 
Set 
\[C_{15}:=\max\{|\sigma_i(A_{\bk})|\mid i=1,\ldots,d, \bk\in\Lambda\}.\]
Let \(2\leq i\leq d\). Using (\ref{pr15}) and (\ref{pr2}), and \(|\beta_i|\leq 1\), we get 
\begin{align*}
|\sigma(Y_R)|&=
\left|
\sum_{\bk\in\Lambda}\sigma_i(A_{\bk})\sum_{n=0}^R\rho(\bk;-n+R)\sigma_i(\beta)^n
\right|\\
&\leq\sum_{\bk\in \Lambda}C_{15}\sum_{n=0}^R C_7^{D}(1+R)^D\ll (R+1)^{D+1}.
\end{align*}
In particular, if \(R\gg 1\), then 
\[|\sigma(Y_R)|\leq R^{D+2}.\]
Hence, if \(Y_R\ne 0\), then we obtain 
\[1\leq |Y_R|\prod_{i=2}^d |\sigma(Y_R)|\leq |Y_R| R^{(D+2)(d-1)}.\]
\end{proof}
In the case of \(\beta=2\) and \(r=1\), Bailey, Borwein, Crandall, and Pomerance estimated 
the numbers \(\widetilde{y_N}\) of positive \(Y_R\) with \(R< N\)
in order to give lower bounds for the nonzero digits 
in binary expansions (Theorem 7.1 in \cite{bai}). 
Moreover, if \(\beta=b>1\) is a rational integer and \(r\geq 2\), then \(\widetilde{y_N}\) is applied to 
prove a criterion for algebraic independence (Theorem 2.1 in \cite{kan1}). \par
Now, we put, for \(N\in \mathbb{Z}^{+}\), 
\[y_N:=\mbox{Card}\left\{R\in\mathbb{N}\ \left| \ R< N, Y_R\geq \frac1{\beta}\right.\right\}.\]
In the case where \(\beta\) is a Pisot or Salem number and \(r=1\), then 
\(y_N\) is estimated to give lower bounds for the numbers of nonzero digits in \(\beta\)-expansions 
(Theorem 2.2 in \cite{kan3}). In what follows, we calculate upper and lower bounds for \(y_N\), which 
gives contradiction. First, we estimate upper bounds for \(y_N\) in Lemma \ref{prlem5}. Next, we 
give lower bounds for \(y_N\) in Lemma \ref{prlem10}, estimating upper bounds for \(R-\theta(R;\Omega)\) 
in Lemma \ref{prlem9}, where 
\begin{align}
\Omega=\left\{R\in\mathbb{N}\left|Y_R\geq \frac1{\beta}\right.\right\}.
\label{Omega}
\end{align}
%using %applying Lemma \ref{prlem4} and 
%lower and upper bounds for the function \(\theta(R)\) in Lemma \ref{prlem3} and the second assumption of 
%Theorem \ref{thm:cri1}. \par
In what follows, we assume that 
\(N\) is a sufficiently large integer satisfying 
\begin{align}
\left(1+\frac1{N}\right)^D<\frac{\beta+1}2. \label{pr16}
\end{align}
\begin{lem}We have 
\[y_N=o\left(N^{1-\delta/2}\right)\]
as \(N\) tends to infinity. 
\label{prlem5}
\end{lem}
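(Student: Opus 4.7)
The plan is to show that every $R \in [0,N)$ with $Y_R \geq 1/\beta$ must satisfy $R + m \in \sum_{h=1}^r k_h S_h$ for some $\bk \in \Lambda$ and some small $m \geq 1$, and then count such $R$ using Lemma \ref{prlem1} together with the third assumption of Theorem \ref{thm:cri1}.

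First I would establish a uniform tail estimate. The constant $C_{15}$ introduced in Lemma \ref{prlem4} satisfies $|A_{\bk}| \leq C_{15}$ for all $\bk \in \Lambda$, and (\ref{pr2}) gives $\rho(\bk;n) \leq C_7^D(1+n)^D$, using $|\bk| \leq D$ for $\bk \in \Lambda$. For $R < N$ the tail $\sum_{m > M} \rho(\bk;m+R)\beta^{-m}$ is therefore $O(N^D \beta^{-M})$ by the geometric decay of $\beta^{-m}$. Choosing $M := \lceil C_{17} \log N \rceil$ with $C_{17}$ a sufficiently large constant (depending on $D$, $C_7$, $C_{15}$, $|\Lambda|$ and $\beta$) forces
$$\sum_{\bk \in \Lambda} |A_{\bk}| \sum_{m=M+1}^{\infty} \rho(\bk;m+R)\beta^{-m} < \frac{1}{2\beta}$$
uniformly for $R < N$.

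Consequently, whenever $Y_R \geq 1/\beta$ the truncated head $\sum_{\bk \in \Lambda} A_{\bk} \sum_{m=1}^M \rho(\bk;m+R)\beta^{-m}$ must exceed $1/(2\beta) > 0$, so at least one coefficient $\rho(\bk;m+R)$ with $m \in \{1,\ldots,M\}$ and $\bk \in \Lambda$ is positive, i.e.\ $R + m \in \sum_{h=1}^r k_h S_h$. Thus every such $R$ lies in $\bigcup_{\bk \in \Lambda} \bigcup_{m=1}^M \bigl(\sum_{h=1}^r k_h S_h - m\bigr)$, and an application of (\ref{pr4}) yields
$$y_N \leq M\,|\Lambda|\,C_7^D\,\max_{\bk \in \Lambda}\,\underline{\lambda}(N+M)^{\bk}.$$

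To finish, I would combine the third assumption of Theorem \ref{thm:cri1}, which gives $\lambda(S(f_1);R) = o(R^{1/A-\delta})$ and, by iteration over $i \geq 2$, $\lambda(S(f_i);R) = o(\lambda(S(f_1);R)^{\varepsilon})$ for every $\varepsilon > 0$, together with $k_1 \leq A$ for every $\bk \in \Lambda$ (since the $X_1$-degree of $P$ is at most $A$), to obtain $\underline{\lambda}(N+M)^{\bk} = o(N^{1 - A\delta + \varepsilon'})$ for any $\varepsilon' > 0$. Since $A \geq 1$ gives $1 - A\delta \leq 1 - \delta$, choosing $\varepsilon' < (A - 1/2)\delta$ and absorbing the factor $M = O(\log N)$ produces $y_N = o(N^{1-\delta/2})$. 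The main obstacle is the uniform tail estimate: the polynomial factor $(1+m+R)^D$ must be dominated by $\beta^{-m}$ in a way that allows a merely logarithmic cut-off $M$ to work for every $R < N$; a minor remaining point is that $\underline{\lambda}(N+M)^{\bk} \sim \underline{\lambda}(N)^{\bk}$, which is immediate because every $\lambda(S(f_i);\cdot)$ is subpolynomial while $M$ is only logarithmic.
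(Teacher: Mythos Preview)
Your argument is correct. The approach, however, differs from the paper's. The paper bounds $y_N$ via the inequality
\[
y_N \leq K + \beta \sum_{R=0}^{N-K-1} |Y_R|
\qquad (K:=\lceil (1+D)\log_\beta N\rceil),
\]
then expands $|Y_R|\le \sum_{\bk}|A_{\bk}|\sum_{m\ge 1}\beta^{-m}\rho(\bk;m+R)$, swaps the order of summation, and splits at $m=K$: the head is controlled by (\ref{pr3}) and gives $\ll\underline{\lambda}(N)^{\bk}$, while the tail is $O(1)$ by (\ref{pr2}). You instead use the tail bound to localise each $R$ with $Y_R\ge 1/\beta$ near an element of some $\sum_h k_h S_h$ and then count via (\ref{pr4}). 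Both routes rely on the same ingredients (a logarithmic cutoff, the polynomial bound (\ref{pr2}), and the third assumption), but the paper's summation trick sidesteps the explicit pigeonholing step, while your argument yields the extra structural information that every contributing $R$ lies within $O(\log N)$ of a Minkowski sum element. Two minor remarks: your claim $\underline{\lambda}(N+M)^{\bk}\sim\underline{\lambda}(N)^{\bk}$ is not needed and not obviously true for all $i$ (you do not know $\lambda_i(N)\gg\log N$ in general), but the weaker bound $\underline{\lambda}(N+M)^{\bk}\le\underline{\lambda}(2N)^{\bk}=o(N^{1-A\delta+\varepsilon'})$ suffices; and your tail estimate $O(N^D\beta^{-M})$ is correct but deserves one line of justification, e.g.\ via the ratio trick in (\ref{pr16}) or by splitting at $m=N$.
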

\begin{proof}
Put \[K:=\lceil (1+D)\log_{\beta} N\rceil.\]
Then we see 
\begin{align*}
y_N&\leq K+y_{N-K}
=K+\sum_{0\leq R< N-K\atop Y_R\geq 1/\beta} 1\\
&\leq K+\beta\sum_{R=0}^{N-K-1}|Y_R|
\end{align*}
and 
\begin{align*}
\sum_{R=0}^{N-K-1}|Y_R|&\leq 
\sum_{R=0}^{N-K-1}\sum_{\bk\in \Lambda}\sum_{m=1}^{\infty} |A_{\bk}|\beta^{-m}\rho(\bk;m+R)\\
&= \sum_{\bk\in\Lambda} |A_{\bk}|Y(\bk;N),
\end{align*}
where 
\[Y(\bk;N)=\sum_{R=0}^{N-K-1}\sum_{m=1}^{\infty} \beta^{-m}\rho(\bk;m+R)\]
for \(\bk\in \Lambda\). For the proof of Lemma \ref{prlem5}, it suffices to show for any 
\(\bk=(k_1,k_2,\ldots,k_r)\in\Lambda\) that 
\begin{align}
Y(\bk;N)=o\left(N^{1-\delta/2}\right)
\label{pr17}
\end{align}
as \(N\) tends to infinity. Observe that 
\begin{align}
0\leq Y(\bk;N)&=\sum_{m=1}^{K}\sum_{R=0}^{N-K-1} \beta^{-m}\rho(\bk;m+R)\nonumber\\
&\hspace{10mm}
+\sum_{m=K+1}^{\infty}\sum_{R=0}^{N-K-1} \beta^{-m}\rho(\bk;m+R)\nonumber\\
&=:S^{(1)}(\bk;N)+S^{(2)}(\bk;N).
\label{pr18}
\end{align}
Using (\ref{pr3}), we get 
\begin{align*}
S^{(1)}(\bk;N)
&\leq 
\sum_{m=1}^{K}\beta^{-m}\sum_{R=0}^{N-1} \rho(\bk;R)
\leq 
\sum_{m=1}^{\infty}\beta^{-m}\sum_{R=0}^{N-1} \rho(\bk;R)\\
&\leq \sum_{m=1}^{\infty}\beta^{-m}C_7^D \underline{\lambda}(N)^{\bk}
\ll \underline{\lambda}(N)^{\bk}.
\end{align*}
Thus, the third assumption of Theorem \ref{thm:cri1} implies that 
\begin{align}
S^{(1)}(\bk;N)\ll \lambda_1(N)^A\prod_{i=2}^r\lambda_i(N)^{k_i}
=o\left(N^{1-\delta/2}\right).
\label{pr19}
\end{align}
Using (\ref{pr2}), we see 
\begin{align*}
S^{(2)}(\bk;N)&\leq \sum_{m=K+1}^{\infty}\beta^{-m}\sum_{R=0}^{N-K-1}C_7^D(m+R+1)^D\\
&\ll \sum_{m=K+1}^{\infty}\beta^{-m} N(m+N)^D.
\end{align*}
Note for any \(m\in\mathbb{N}\) that 
\[\left(\frac{m+1+N}{m+N}\right)^D\leq \left(1+\frac1N\right)^D< \frac{\beta+1}2\]
by (\ref{pr16}). Hence, we obtain 
\begin{align}
S^{(2)}(\bk;N)
&\ll
\beta^{-K-1}N(K+1+N)^D\sum_{m=0}^{\infty}\beta^{-m}\left(\frac{\beta+1}2\right)^m\nonumber\\
&\ll \beta^{-K-1} N^{D+1}\leq 1.
\label{pr20}
\end{align}
Hence, combining (\ref{pr18}), (\ref{pr19}), and (\ref{pr20}), we deduce (\ref{pr17}). 
\end{proof}
In what follows, we estimate lower bounds for \(y_N\) in the case where \(N\in \Xi\) is sufficiently large. 
Recall that \(\Lambda_2\) is not empty by Lemma \ref{prlem2} and that \(0\in S_i\) 
for \(i=1,\ldots,r\). 
In particular, for any \(\bk\in \Lambda\), we have 
\(\rho(\bk;0)>0\). Put 
\begin{align*}
&\{T\in\mathbb{N}\mid T< N, \rho(\bk;T)>0\mbox{ for some }\bk\in \Lambda_2\}\\
&\hspace{30mm}=:\{0=T_1<T_2<\cdots<T_{\tau}\}.
\end{align*}
If \(N\) is sufficiently large, then (\ref{pr4}) and (\ref{pr13}) imply that 
\[\tau\leq \sum_{\bk\in\Lambda_2} C_7^{|\bk|}\underline{\lambda}(N)^{\bk}\leq 
\frac{1}{32}\underline{\lambda}(N)^{\be}. 
\]
For convenience, put \(T_{1+\tau}:=N\). Set 
\[\mathcal{J}:=\{J=J(j)\mid 1\leq j\leq \tau\},\]
where \(J(j)\) is an interval of \(\mathbb{R}\) defined by 
\(J(j)=[T_j,T_{1+j})\) for \(1\leq j\leq \tau\). \par
%\begin{align*}
%J(j):=\left\{
%\begin{array}{cc}
%[T_j,T_{1+j}) & (1\leq j\leq \tau-1), \\
%\null [T_{\tau},T_{1+\tau}] & (j=\tau).
%\end{array}
%\right.
%\end{align*}
In what follows, we denote the length of a bounded interval \(I\) of \(\mathbb{R}\) by \(|I|\). Then we have 
\[\sum_{J\in\mathcal{J}}|J|=N. \]
Let 
\begin{align*}
\mathcal{J}_1
&:=
\left\{J\in\mathcal{J}\ \left| \ |J|\geq \frac{16 N}{\underline{\lambda}(N)^{\be}}\right.\right\},\\
\mathcal{J}_2
&:=
\{J\in \mathcal{J}_1\mid J\subset[C_{12},N)\}.
\end{align*}
In the same way as the proof of Lemma 4.7 in \cite{kan1}, we obtain the following: 
\begin{lem}
If \(N\in \Xi\) is sufficiently large, then we have 
\[
\sum_{J \in \mathcal{J}_1}|J|\geq \frac{N}2, \ 
\sum_{J \in \mathcal{J}_2}|J|\geq \frac{N}3.
\]
\label{prlem6}
\end{lem}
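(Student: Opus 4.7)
The plan is to establish the two inequalities separately, relying on the partition property of $\mathcal{J}$, the counting estimate $\tau\leq \frac{1}{32}\underline{\lambda}(N)^{\be}$ derived just above the lemma, and the size threshold $16N/\underline{\lambda}(N)^{\be}$ appearing in the definition of $\mathcal{J}_1$. The argument is modeled on the proof of Lemma~4.7 in \cite{kan1}.

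For the first inequality, I would observe that the intervals in $\mathcal{J}$ partition $[0,N)$ and that every $J\in\mathcal{J}\setminus\mathcal{J}_1$ satisfies $|J|<16N/\underline{\lambda}(N)^{\be}$. Since $|\mathcal{J}\setminus\mathcal{J}_1|\leq|\mathcal{J}|=\tau$, the length lost to short intervals is at most
\[
\tau\cdot\frac{16N}{\underline{\lambda}(N)^{\be}}\leq \frac{1}{32}\underline{\lambda}(N)^{\be}\cdot\frac{16N}{\underline{\lambda}(N)^{\be}}=\frac{N}{2}.
\]
Subtracting this from $\sum_{J\in\mathcal{J}}|J|=N$ immediately yields $\sum_{J\in\mathcal{J}_1}|J|\geq N/2$.

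For the second inequality, I would quantify the further loss in restricting from $\mathcal{J}_1$ to $\mathcal{J}_2$. An interval $J(j)=[T_j,T_{j+1})$ lies in $\mathcal{J}_1\setminus\mathcal{J}_2$ only if $T_j<C_{12}$. Letting $j^{\ast}$ denote the largest index with $T_{j^{\ast}}<C_{12}$, the union of all such intervals is contained in $[0,T_{j^{\ast}+1})$, so
\[
\sum_{J\in\mathcal{J}_1\setminus\mathcal{J}_2}|J|\leq T_{j^{\ast}+1}.
\]
The key point — and the only mildly delicate step — is that $T_{j^{\ast}+1}$ stabilizes as $N$ grows: by Lemma~\ref{prlem2} the set $\Lambda_2$ is nonempty, and for any $\bk\in\Lambda_2$ the set $\sum_h k_h S_h$ is infinite because each $S_h$ is infinite (as $f_h$ is nonpolynomial). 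Consequently there is a smallest $T\geq C_{12}$ lying in $\sum_h k_h S_h$ for some $\bk\in\Lambda_2$, and once $N$ exceeds this value the quantity $T_{j^{\ast}+1}$ equals that fixed constant, independent of $N$. Hence for $N$ sufficiently large we have $T_{j^{\ast}+1}\leq N/6$, and then
\[
\sum_{J\in\mathcal{J}_2}|J|\geq \sum_{J\in\mathcal{J}_1}|J|-T_{j^{\ast}+1}\geq \frac{N}{2}-\frac{N}{6}=\frac{N}{3},
\]
which completes the plan. The hypothesis $N\in\Xi$ is not invoked in this particular lemma beyond the ambient assumption that $N$ is large enough to make the constants above harmless; it will play its role in later lemmas where one must compare quantities at $N$ with those at smaller scales.
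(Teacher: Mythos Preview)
Your argument is correct and matches the intended approach: the paper does not give a separate proof but defers to Lemma~4.7 of \cite{kan1}, and your reconstruction---bounding the short intervals by $\tau\cdot 16N/\underline{\lambda}(N)^{\be}\le N/2$ for the first inequality, and observing that only the finitely many intervals meeting $[0,C_{12})$ are lost in passing to $\mathcal{J}_2$, with $T_{j^*+1}$ eventually equal to a fixed constant---is exactly that argument. Your closing remark that the hypothesis $N\in\Xi$ is not genuinely used here is also accurate; it is needed only later (in Lemmas~\ref{prlem7}--\ref{prlem10}) where the definition of $\Xi$ is used to compare $n/\underline{\lambda}(n)^{\be}$ at different scales.
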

Recall that \(\Lambda_1\) is not empty by Lemma \ref{prlem2}. Let \(\bk_1\) be the maximal element 
of \(\Lambda_1\) with respect to \(\succ\). Set 
\begin{align*}
&\{R\in\mathbb{N}\mid R< N, \rho(\bk;R)>0\mbox{ for some }\bk\in\Lambda_1\}\\
&\hspace{30mm}=:
\{0=R_1<R_2<\cdots<R_{\mu}\}
\end{align*}
and \(R_{1+\mu}:=N\). Then (\ref{pr4}) implies that 
\[\mu\leq \sum_{\bk\in \Lambda_1} C_7^{|\bk|}\underline{\lambda}(N)^{\bk}
\leq C_{16}\underline{\lambda}(N)^{\bk_1},\]
where \(C_{16}\) is a positive constant. \par
Let 
\[\mathcal{I}:=\{I=I(i)\mid 1\leq i\leq \mu\},\]
where \(I(i)\) is an interval of \(\mathbb{R}\) defined by 
\(I(i)=[R_i,R_{i+1})\) for \(1\leq i\leq \mu\). 
%\begin{align*}
%I(i):=
%\left\{
%\begin{array}{cc}
%[R_i,R_{i+1}) & (1\leq i\leq \mu-1),\\
%\null [R_{\mu},R_{\mu+1}] & (i=\mu).
%\end{array}
%\right.
%\end{align*}
Set 
\[y_N(i):=\mbox{Card}\left\{R\in I(i) \left|  Y_R\geq \frac1{\beta}\right.\right\}\]
for \(i=1,\ldots,\mu\). Observe that 
\[\sum_{I\in\mathcal{I}}|I|=N\]
and that 
\begin{align}
\sum_{i=1}^{\mu} y_N(i)=y_N.
\label{pr21}
\end{align}
Set 
\begin{align*}
\mathcal{I}_1&:=\{I\in\mathcal{I}\mid I\subset J\mbox{ for some }J\in\mathcal{J}\},\\
\mathcal{I}_2&:=\left\{I\in\mathcal{I}_1\left||I|\geq \frac1{12C_{16}}\frac{N}{\underline{\lambda}(N)^{\bk_1}}
\right.\right\}.
\end{align*}
In the same way as the proof of Lemma 4.8 in \cite{kan1}, we obtain the following: 
\begin{lem}
For any sufficiently large \(N\in \Xi\), we have 
\begin{align}
\sum_{I\in \mathcal{I}_1}|I|\geq \frac{N}6, \ \sum_{I\in \mathcal{I}_2}|I|\geq \frac{N}{12}.
\label{pr22}
\end{align}
\label{prlem7}
\end{lem}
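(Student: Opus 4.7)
The plan is to bound $\sum_{I \in \mathcal{I}_1} |I|$ from below by transferring the estimate $\sum_{J \in \mathcal{J}_2} |J| \geq N/3$ of Lemma \ref{prlem6} to the $I$-partition. The key point is that inside every $J \in \mathcal{J}_2$, the $R$-points are so dense that the $I$-intervals fully contained in $J$ cover at least half of $J$. Once this is done, the second inequality will follow by discarding the short intervals in $\mathcal{I}_1 \setminus \mathcal{I}_2$, whose total length is controlled by the upper bound $\mu \leq C_{16}\underline{\lambda}(N)^{\bk_1}$.

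First I would observe that $(g_1, \ldots, g_{r-1}, 0) \in \Lambda_1$, since $g_r \geq 1$ by the factorization of $P$ and $0 \in S_r$ since we have reduced to $f_i(0) \neq 0$; therefore $\rho\bigl((g_1,\ldots,g_{r-1},0);R\bigr) > 0$ whenever $R \in \sum_{h=1}^{r-1} g_h S_h$, so that $\{R_1, \ldots, R_\mu\} \supset \sum_{h=1}^{r-1} g_h S_h \cap [0,N)$. Fixing $J = [T_j, T_{j+1}) \in \mathcal{J}_2$, so that $T_j \geq C_{12}$ and $|J| \geq 16N/\underline{\lambda}(N)^{\be}$, I would apply Lemma \ref{prlem3}: for any $M \in [T_j, T_{j+1})$ it produces an element of $\sum_{h=1}^{r-1} g_h S_h$ inside $(M, M + 4M/\underline{\lambda}(M)^{\be}]$. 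Since $N \in \Xi$ implies $M/\underline{\lambda}(M)^{\be} \leq N/\underline{\lambda}(N)^{\be}$, this yields a uniform gap bound of $4N/\underline{\lambda}(N)^{\be}$ for consecutive $R$-points in $J$. Letting $R_\alpha$ and $R_\omega$ denote respectively the smallest and largest $R$-points in $J$, one then obtains $R_\alpha - T_j \leq 4N/\underline{\lambda}(N)^{\be}$ (by applying the lemma at $M = T_j$) and $T_{j+1} - R_\omega \leq 4N/\underline{\lambda}(N)^{\be}$ (otherwise the lemma at $M = R_\omega$ would produce a further $R$-point still inside $J$).

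Consequently, the intervals $I(\alpha), I(\alpha+1), \ldots, I(\omega-1)$ all lie in $\mathcal{I}_1$, are contained in $J$, and together cover $[R_\alpha, R_\omega)$, which has length at least $|J| - 8N/\underline{\lambda}(N)^{\be} \geq |J|/2$. Summing over $J \in \mathcal{J}_2$ yields
\[\sum_{I \in \mathcal{I}_1} |I| \ \geq \ \frac12 \sum_{J \in \mathcal{J}_2}|J| \ \geq \ \frac{N}{6}.\]
For the second inequality, every $I \in \mathcal{I}_1 \setminus \mathcal{I}_2$ has length strictly less than $N/\bigl(12 C_{16}\underline{\lambda}(N)^{\bk_1}\bigr)$, and the number of such intervals is at most $\mu \leq C_{16}\underline{\lambda}(N)^{\bk_1}$; hence their total length is at most $N/12$, and subtraction gives $\sum_{I \in \mathcal{I}_2} |I| \geq N/6 - N/12 = N/12$.

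The main technical point is the density argument — establishing that a uniform gap bound of $4N/\underline{\lambda}(N)^{\be}$ holds for the $R$-points within each $J \in \mathcal{J}_2$. Lemma \ref{prlem3} only gives a local gap bound at the scale $M$, and it is precisely the definition of $\Xi$ that renders $M/\underline{\lambda}(M)^{\be}$ nondecreasing up to $M = N$, converting the local bound into a uniform one. The matching scale $|J| \geq 16N/\underline{\lambda}(N)^{\be}$ built into the definition of $\mathcal{J}_1$ is then exactly what guarantees that a positive fraction of $J$ is covered by $I$-intervals of $\mathcal{I}_1$.
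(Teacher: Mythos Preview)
Your argument is correct and follows the same route as the paper (which defers to Lemma~4.8 of \cite{kan1}): use Lemma~\ref{prlem3} together with the defining property of \(\Xi\) to force the points of \(\sum_{h=1}^{r-1}g_hS_h\) to be \(4N/\underline{\lambda}(N)^{\be}\)-dense inside each \(J\in\mathcal{J}_2\), conclude that the \(I\)-intervals contained in \(J\) cover at least \(|J|/2\), sum over \(\mathcal{J}_2\) via Lemma~\ref{prlem6}, and finally discard the short intervals using \(\mu\le C_{16}\underline{\lambda}(N)^{\bk_1}\).

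One small correction: in this paper \(\Lambda_1\) is implicitly taken as a subset of \(\Lambda\) (otherwise the identity \(\Lambda=\{\bg\}\cup\Lambda_1\cup\Lambda_2\) and the nontriviality assertion of Lemma~\ref{prlem2} would both be meaningless), so the tuple \((g_1,\ldots,g_{r-1},0)\) need not belong to \(\Lambda_1\). The fix is immediate and is in fact the argument the paper uses just after (\ref{pr26}): take any \(\bk=(g_1,\ldots,g_{r-1},k_r)\in\Lambda_1\) (nonempty by Lemma~\ref{prlem2}); since \(0\in S_r\) we have \(\sum_{h=1}^{r-1}g_hS_h\subset\sum_{h=1}^{r-1}g_hS_h+k_rS_r\), hence \(\rho(\bk;R)>0\) whenever \(R\in\sum_{h=1}^{r-1}g_hS_h\), and your inclusion \(\{R_1,\ldots,R_\mu\}\supset\bigl(\sum_{h=1}^{r-1}g_hS_h\bigr)\cap[0,N)\) follows.
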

In what follows, we assume that 
\(N\in \Xi\) satisfies 
\begin{align}
N^{\delta/2}\geq (1+C_8)C_9.
\label{pr23}
\end{align}
Let \(1\leq i\leq \mu\) with \(I(i)\in\mathcal{I}_2\) and let \(R\in (R_i,R_{i+1})\). 
We now show that 
\begin{align}
\rho(\bk;R)=0
\label{pr24}
\end{align}
for any \(\bk\in \Lambda_1\cup\Lambda_2=\Lambda\backslash\{\bg\}\). 
In fact, if \(\bk\in \Lambda_1\), then (\ref{pr24}) follows from 
the definition of \(R_1,\ldots,R_{\mu+1}\). Suppose that \(\bk\in \Lambda_2\). 
By the definition of \(\mathcal{I}_2\), we have \(I(i)\subset J(j)\) for some \(j\) with \(1\leq j\leq \tau\), 
and so \(R\in (T_j,T_{1+j})\). Thus, we get (\ref{pr24}). \par
%Let \(I(i)\in\mathcal{I}_2\). Then, for any \(R\in (R_i,R_{i+1})\) and \(\bk\in\Lambda_1\), we see 
%\(\rho(\bk;R)=0\). Moreover, since \(I(i)\subset J(j)\) for some \(j\) with \(1\leq j\leq \tau\) by 
%\(I(i)\in\mathcal{I}_1\), we get, for any \(R\in (R_i,R_{i+1})\) that 
%\begin{align}
%\rho(\bk;R)=0\mbox{ for any }\bk\in \Lambda_1\cup\Lambda_2=\lambda\backslash\{\bg\}
%\label{pr24}
%\end{align}
%by \(R\in (T_j,T_{1+j})\). \par
Applying the third assumption of Theorem \ref{thm:cri1} with \(\varepsilon=\delta/(2D)\), 
we see by \(g_1\leq A\) that 
\[\underline{\lambda}(N)^{\bk_1}=o\left(N^{-\delta/2+1}\right)\]
as \(N\in \Xi\) tends to infinity. Thus, we obtain for any sufficiently large \(N\in \Xi\) that 
\begin{align}
|I(i)|\geq \frac{1}{12C_{16}}\frac{N}{\underline{\lambda}(N)^{\bk_1}}\geq N^{\delta/2}.
\label{pr25}
\end{align}
%by the third assumption of Theorem \ref{thm:cri1} and \(g_1\leq A\). 
We can apply 
the fourth assumption of Theorem \ref{thm:cri1} with 
\[R=\frac{|I(i)|}{1+C_8}\geq \frac{N^{\delta/2}}{1+C_8}\geq C_9\]
by (\ref{pr25}) and (\ref{pr23}). Thus, we get that 
there exists \(V(N,i)\in S_r\) with 
\[\frac{|I(i)|}{1+C_8}\leq V(N,i)\leq \frac{C_8|I(i)|}{1+C_8}.\]
Put 
\(M=M(N,i):=R_i+V(N,i)\). Then we have 
\begin{align}
R_i+\frac{|I(i)|}{1+C_8}\leq M\leq R_i+\frac{C_8|I(i)|}{1+C_8}.
\label{pr26}
\end{align}
By the definition of \(R_i\), there exists \(k_r\leq -1+g_r\) such that 
\[R_i\in\sum_{h=1}^{r-1} g_h S_h+k_r S_r.\]
Using Remark \ref{rem:cri1}, we see 
\[R_i\in\sum_{h=1}^{r-1} g_h S_h+(-1+g_r) S_r.\]
Thus, we get 
\begin{align}
M\in\sum_{h=1}^r g_h S_h
\label{pr27}
\end{align}
by \(V(N,i)\in S_r\). 
\begin{lem}
Let \(N\in\Xi\) be sufficiently large and let \(1\leq i\leq \mu\) with \(I(i)\in\mathcal{I}_2\). Then 
\(Y_R>0\) for any \(R\) with \(R_i\leq R<M\). 
\label{prlem8}
\end{lem}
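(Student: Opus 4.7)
The plan is to isolate the contribution of the greatest-exponent index $\bg \in \Lambda$ and show it dominates the contributions from all other $\bk \in \Lambda \setminus \{\bg\} = \Lambda_1 \cup \Lambda_2$, which will turn out to be exponentially smaller. The two structural inputs I would exploit are equation (\ref{pr24})---which forces $\rho(\bk;R')=0$ for $\bk\neq\bg$ and $R'\in(R_i,R_{i+1})$---together with (\ref{pr27}), which guarantees $\rho(\bg;M)\geq 1$.

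First I would decompose
\[
Y_R = A_{\bg}\,U(R) + W(R), \qquad U(R) := \sum_{m=1}^{\infty}\rho(\bg;m+R)\beta^{-m},
\]
and $W(R) := \sum_{\bk \in \Lambda\setminus\{\bg\}} A_{\bk}\sum_{m=1}^{\infty}\rho(\bk;m+R)\beta^{-m}$. For $R\in[R_i,M)$, the index $m_0 := M-R \geq 1$ picks out a positive term in $U(R)$ of size at least $\beta^{-(M-R)}$, and since every other term in $U(R)$ is nonnegative and $A_{\bg}\geq 1$ by (\ref{pr9}), this yields the lower bound $A_{\bg}U(R)\geq \beta^{-(M-R)}$.

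For the error term $W(R)$, the crucial observation is that whenever $R\in[R_i,M)$ and $m\geq 1$ satisfy $R+m<R_{i+1}$, the argument $R+m$ lies in the open interval $(R_i,R_{i+1})$, and (\ref{pr24}) forces $\rho(\bk;R+m)=0$ for every $\bk\in\Lambda\setminus\{\bg\}$. Hence only the tail $m\geq R_{i+1}-R$ can contribute. Bounding each remaining $\rho(\bk;R+m)$ by $C_7^{D}(1+R+m)^{D}$ via (\ref{pr2}) and summing the geometric-times-polynomial tail gives $|W(R)|\ll N^{D}\beta^{-(R_{i+1}-R)}$, with an implied constant depending only on $\Lambda$, the $|A_{\bk}|$, $C_7$, $D$, and $\beta$.

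Combining the two estimates produces $Y_R \geq \beta^{-(M-R)}\bigl(1-O(N^{D}\beta^{-(R_{i+1}-M)})\bigr)$. The construction of $M$ from $V(N,i)\leq C_8|I(i)|/(1+C_8)$ gives $R_{i+1}-M\geq |I(i)|/(1+C_8)$, and $I(i)\in\mathcal{I}_2$ combined with (\ref{pr25}) gives $|I(i)|\geq N^{\delta/2}$. Therefore the parenthesized factor exceeds $1/2$ once $N\in\Xi$ is sufficiently large, and in particular $Y_R>0$. The only delicate point is controlling the tail bound for $W(R)$: one must verify that the polynomial-in-$N$ loss from $(1+R+m)^{D}$ is dominated by the super-polynomial gain $\beta^{R_{i+1}-M}\geq \beta^{N^{\delta/2}/(1+C_8)}$, which is immediate since $N^{\delta/2}$ dwarfs $D\log_{\beta} N$.
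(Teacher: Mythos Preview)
Your argument is correct, but it proceeds by a genuinely different route than the paper's. The paper argues by \emph{backward induction} on $R$: it first proves the base case $Y_{M-1}>0$ (via the same main/error split you use, with $S^{(3)}\ge 1/\beta$ from (\ref{pr27}) and $|S^{(4)}|<1/(2\beta)$ from (\ref{pr24}) and (\ref{pr2})), and then observes that for $R_i<R<M$ the vanishing in (\ref{pr24}) yields the exact recursion
\[
Y_{R-1}=\frac{A_{\bg}}{\beta}\rho(\bg;R)+\frac{1}{\beta}Y_R,
\]
so positivity propagates down to $R=R_i$ with a one-line inductive step. Your approach instead gives a direct, uniform estimate $Y_R\ge \beta^{-(M-R)}\bigl(1-O(N^D\beta^{-(R_{i+1}-M)})\bigr)$ for every $R\in[R_i,M)$, using $R_{i+1}-M\ge |I(i)|/(1+C_8)\ge N^{\delta/2}/(1+C_8)$ to kill the error. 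What you gain is an explicit quantitative lower bound on $Y_R$ without any induction; what the paper gains is that the recursion it isolates is reused verbatim in the proof of the next lemma (Lemma~\ref{prlem9}), so its argument dovetails more cleanly with the subsequent step.
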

\begin{proof}
We prove Lemma \ref{prlem8} by induction on \(R\). First we show that \(Y_{M-1}>0\). 
We see 
\begin{align}
Y_{M-1}&=
A_{\bg} \sum_{m=1}^{\infty}\beta^{-m}\rho(\bg;m+M-1)\nonumber\\
&\hspace{10mm}+\sum_{\bk\in\Lambda\backslash\{\bg\}}A_{\bk}
\sum_{m=1}^{\infty}\beta^{-m}\rho(\bk;m+M-1)\nonumber\\
&=:S^{(3)}+S^{(4)}.
\label{pr28}
\end{align}
By (\ref{pr27}) 
\begin{align}
S^{(3)}\geq \frac{A_{\bg}}{\beta}\rho(\bg;M)\geq \frac{1}{\beta}.
\label{pr29}
\end{align}
We now estimate upper bounds for \(|S_4|\). Let \(m\) be an integer with 
\begin{align}
1\leq m\leq -1+\lceil 2D\log_{\beta} N\rceil.
\label{eqn:abc}
\end{align}
Using (\ref{pr26}) and (\ref{pr25}), we get 
\begin{align*}
R_{i+1}-M&\geq R_{i+1}-R_i-\frac{C_8|I(i)|}{1+C_8}\\
&=\frac{|I(i)|}{1+C_8}>m
\end{align*}
for sufficiently large \(N\in \Xi\) and 
\[R_{i+1}>m+M-1>R_i.\]%\[R_i<m+M-1<R_{i+1}.\]
%Thus, we apply (\ref{pr24}) with \(R=m+M-1\). Hence, using (\ref{pr2}), we obtain 
Thus, applying (\ref{pr24}) with \(R=m+M-1\) for any \(m\) with (\ref{eqn:abc}), we obtain 
by (\ref{pr2}) that 
\begin{align*}
|S^{(4)}|&\leq
\sum_{\bk\in\Lambda\backslash\{\bg\}}|A_{\bk}|\sum_{m=\lceil 2D\log_{\beta} N\rceil}^{\infty}\beta^{-m}\rho(\bk;m+M-1)\\
&\leq
\sum_{\bk\in\Lambda\backslash\{\bg\}}|A_{\bk}|\sum_{m=\lceil 2D\log_{\beta} N\rceil}^{\infty}
\beta^{-m}C_7^D(m+N)^D\\
&\ll \sum_{m=\lceil 2D\log_{\beta} N\rceil}^{\infty}\beta^{-m}(m+N)^D.
\end{align*}
Therefore, (\ref{pr16}) implies that 
\[|S^{(4)}|\ll N^{-2D}\left(\lceil 2D\log_{\beta} N\rceil+N\right)^D\sum_{m=0}^{\infty}\beta^{m}
\left(\frac{1+\beta}{2}\right)^m=o(1)\]
as \(N\) tends to infinity. In particular, if \(N\in \Xi\) is sufficiently large, then 
\begin{align}
|S^{(4)}|<\frac1{2\beta}.
\label{pr30}
\end{align}
Combining (\ref{pr28}), (\ref{pr29}), and (\ref{pr30}), we deduce that 
if \(N\in \Xi\) is sufficiently large, then \(Y_{M-1}>0\). \par
Next, we assume that \(Y_R>0\) for some \(R\) with \(R_i<R<M\). Using (\ref{pr24}), we see 
\begin{align}
Y_{R-1}&=
\sum_{\bk\in\Lambda} A_{\bk}\frac1{\beta}\rho(\bk;R)+
\sum_{\bk\in\Lambda} A_{\bk}\sum_{m=2}^{\infty}\beta^{-m}\rho(\bk;m+R-1)\nonumber\\
&=\frac{A_{\bg}}{\beta}\rho(\bg;R)
+\frac1{\beta}\sum_{\bk\in\Lambda} A_{\bk}\sum_{m=1}^{\infty}\beta^{-m}\rho(\bk;m+R)\nonumber\\
&=
\frac{A_{\bg}}{\beta}\rho(\bg;R)+\frac1{\beta}Y_R.
\label{pr31}
\end{align}
By the inductive hypothesis 
\[Y_{R-1}>\frac{A_{\bg}}{\beta}\rho(\bg;R)\geq 0.\]
Therefore, we proved Lemma \ref{prlem8}. 
\end{proof}
Recall that \(\Omega\) is defined in (\ref{Omega}). 
\begin{lem}
Let \(N\in\Xi\) be sufficiently large and let \(1\leq i\leq \mu\) with \(I(i)\in\mathcal{I}_2\). Let \(R\) be an integer with 
\[R_i+4C_{13}\log_{\beta} N\leq R< M. \]
Then we have 
\begin{align}
R-\theta(R;\Omega)
%\max\left\{R'\in\mathbb{N}\left|R'<R, Y_{R'}\geq \frac1{\beta}\right.\right\}
\leq 2C_{13}\log_{\beta} N.
\label{pr32}
\end{align}
\label{prlem9}
\end{lem}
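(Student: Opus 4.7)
The plan is to argue by contradiction. Suppose $R - \theta(R;\Omega) > 2C_{13}\log_{\beta} N$, and set $s := \lceil 2C_{13}\log_{\beta} N\rceil$, so $s \le R - \theta(R;\Omega) - 1$. Then every integer $n \in [R-s,\,R-1]$ is strictly between $\theta(R;\Omega)$ and $R$, hence $n \notin \Omega$ and $Y_n < 1/\beta$. On the other hand, the hypothesis $R \ge R_i + 4C_{13}\log_{\beta} N$ guarantees $R - s > R_i$ for $N$ large, so Lemma~\ref{prlem8} supplies $Y_m > 0$ for every $m \in [R-s,\,M)$.

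First I would force $\rho(\bg;m) = 0$ on the block $m \in [R-s+1,\,R]$. Fix such an $m$, set $n = m-1$, and rewrite (\ref{pr31}) as $Y_m = \beta Y_{m-1} - A_{\bg}\rho(\bg;m)$; if $\rho(\bg;m) \ge 1$ were to hold, then $A_{\bg}\ge 1$ combined with $Y_n < 1/\beta$ would yield $Y_m < 1 - 1 = 0$, contradicting the positivity from Lemma~\ref{prlem8}. Hence $\rho(\bg;m) = 0$ throughout the block, and iterating the recursion (\ref{pr31}) collapses it to $Y_m = \beta Y_{m-1}$, producing the clean identity
\[
Y_R = \beta^{s}\,Y_{R-s}.
\]

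Next I would combine a Liouville-type lower bound with a trivial upper estimate on $Y_R$. Lemma~\ref{prlem8} gives $Y_{R-s} > 0$, so Lemma~\ref{prlem4} yields $Y_{R-s} \ge (R-s)^{-C_{13}} \ge N^{-C_{13}}$; together with $\beta^{s} \ge N^{2C_{13}}$ this forces $Y_R \ge N^{C_{13}}$. Bounding $|Y_R|$ directly from the series representation (as in the proof of Lemma~\ref{prlem4}, using (\ref{pr2}) and convergence of $\sum\beta^{-m}$) gives $|Y_R| \ll R^{D} \le N^{D}$, where $D$ is the total degree of $P$. Since Lemma~\ref{prlem4} permits $C_{13} = (D+2)(d-1) \ge D+2 > D$ when $\beta$ has degree $d\ge 2$, the inequalities $N^{C_{13}} \le Y_R \ll N^{D}$ contradict each other for $N$ sufficiently large; the rational-integer case $d=1$ is immediate since then $Y_R \in \mathbb{Z}$ with $Y_R>0$ already forces $Y_R \ge 1 > 1/\beta$, giving $R-1 \in \Omega$ directly.

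The main obstacle I anticipate is the rigidity step forcing $\rho(\bg;m) = 0$: it depends sharply on the exact threshold $1/\beta$ defining $\Omega$ interacting with the normalization $A_{\bg}\ge 1$ and the $\mathbb{N}$-valuedness of $\rho$, so that $\beta Y_{m-1} < 1 \le A_{\bg}\rho(\bg;m)$ can close the gap. Once the block is rendered ``$\bg$-empty'', the geometric amplification by $\beta^{s}$ against the Liouville lower bound from Lemma~\ref{prlem4} does all the remaining work.
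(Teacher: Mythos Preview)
Your argument is correct, but it takes a somewhat different route from the paper's and overcomplicates the final step.

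The paper does not prove that $\rho(\bg;\cdot)$ vanishes on a block; instead it \emph{assumes} $\rho(\bg;R-m)=0$ for all $0\le m\le S:=\lceil C_{13}\log_\beta N\rceil$ and derives a contradiction directly: from $0<Y_R<1$ (the case $Y_R\ge 1$ being trivial) and the recursion one gets $\beta^{S+1}Y_{R-S-1}=Y_R<1$, so $Y_{R-S-1}<\beta^{-(S+1)}\le N^{-C_{13}}$, contradicting Lemma~\ref{prlem4}. Negating, some $m'\le S$ has $\rho(\bg;R-m')\ge 1$, and then (\ref{pr31}) gives $Y_{R-m'-1}\ge A_{\bg}/\beta\ge 1/\beta$, so $R-m'-1\in\Omega$.

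Your ``rigidity'' step---forcing $\rho(\bg;m)=0$ via $Y_m=\beta Y_{m-1}-A_{\bg}\rho(\bg;m)<1-1=0$ whenever $Y_{m-1}<1/\beta$ and $\rho(\bg;m)\ge1$---is a nice direct argument the paper does not use. However, once you have it you are working harder than necessary at the end. Since you have already shown $Y_{R-1}<1/\beta$ and $\rho(\bg;R)=0$, the recursion immediately gives $Y_R=\beta Y_{R-1}<1$; this contradicts $Y_R\ge N^{C_{13}}$ for any positive integer $C_{13}$, with no need for the bound $|Y_R|\ll N^{D}$, no need to inspect the proof of Lemma~\ref{prlem4} for the explicit value $C_{13}=(D+2)(d-1)$, and no separate treatment of the rational-integer case $d=1$. (Your $|Y_R|\ll N^D$ bound is correct, incidentally, but it comes from the \emph{infinite} series in (\ref{pr15}) together with (\ref{pr2}); the proof of Lemma~\ref{prlem4} bounds only the conjugates $\sigma_i(Y_R)$ for $i\ge2$ via the \emph{finite} sum.) There is also a harmless off-by-one: from $R-\theta(R;\Omega)>2C_{13}\log_\beta N$ you only get $R-\theta(R;\Omega)\ge s$, not $\ge s+1$; taking $s:=\lfloor 2C_{13}\log_\beta N\rfloor$ fixes this.
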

\begin{proof}
Put 
\(R_1:=\theta(R;\Omega)%\max\left\{R'\in\mathbb{N}\left|R'<R, Y_{R'}\geq \frac1{\beta}\right.\right\}
.\)
In the same way as the proof of (\ref{pr31}), we see for any integer \(n\) with \(R_i<n<R_{i+1}\) that 
\begin{align}
Y_{n-1}=\frac{A_{\bg}}{\beta}\rho(\bg;n)+\frac1{\beta}Y_n.
\label{pr33}
\end{align}
First, we consider the case of \(Y_R\geq 1\). Then (\ref{pr33}) implies that 
\[Y_{R-1}\geq \frac{1}{\beta}\]
and that \(R-R_1=1\), which implies (\ref{pr32}). \par
In what follows, we may assume that \(0<Y_R<1\) by Lemma \ref{prlem8}. Let \(S:=\lceil C_{13} \log_{\beta} N\rceil\). 
Suppose for any integer \(m\) with \(0\leq m\leq S\) that 
\[\rho(\bg;R-m)=0.\]
Noting \(M>R>R-1>\cdots>R-S>R_i\), we get by (\ref{pr33}) that 
\[1>Y_R=\beta Y_{R-1}=\cdots=\beta^S Y_{R-S}=\beta^{1+S}Y_{R-S-1}>0,\]
where we use Lemma \ref{prlem8} for the last inequality by \(R_i<R-S-1<M\). So we get 
\[\beta^{S+1}<Y_{R-S-1}^{-1}=|Y_{R-S-1}|^{-1}.\]
Since 
\[R-S-1\geq 2 C_{13}\log_{\beta} N>C_{14}\]
for any sufficiently large \(N\), we apply Lemma \ref{prlem4} as follows: 
\[\beta^{S+1}<|Y_{R-S-1}|^{-1}\leq (R-S-1)^{C_{13}}<N^{C_{13}}. \]
Thus, we obtain 
\[\lceil C_{13} \log_{\beta} N\rceil+1=S+1<C_{13} \log_{\beta} N,\]
a contradiction. \par
Hence, there exists an integer \(m'\) with \(0\leq m'\leq S\) satisfying 
\(\rho(\bg;R-m')\geq 1\). 
Applying (\ref{pr33}) with \(n=R-m'\), we get by \(Y_{R-m'}>0\) that 
\[Y_{R-m'-1}\geq \frac{A_{\bg}}{\beta}\rho(\bg;R-m')\geq \frac{1}{\beta},\]
where for the last inequality we use (\ref{pr9}). Hence, we deduce that 
\[R-R_1\leq m'+1\leq 2C_{13}\log_{\beta} N. \]
\end{proof}
\begin{lem}
\[\limsup_{N\to\infty}\frac{y_N}{\log N}>0.\]
\label{prlem10}
\end{lem}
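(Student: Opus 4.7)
The plan is to convert the local gap bound of Lemma \ref{prlem9} into a density lower bound on $\Omega$ inside each interval $I(i)\in\mathcal{I}_2$, and then sum those contributions using Lemma \ref{prlem7}. Throughout I would restrict to $N\in\Xi$ large enough that all of the preceding lemmas apply; in particular \eqref{pr16}, \eqref{pr23}, and \eqref{pr25} hold.

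Fix such an $N$, and fix $i$ with $I(i)\in\mathcal{I}_2$, with $M=M(N,i)$ as in \eqref{pr26}. Consider the subinterval
\[J_i:=[R_i+4C_{13}\log_{\beta}N,\,M)\subseteq I(i).\]
From \eqref{pr26}, $M-R_i\geq |I(i)|/(1+C_8)$, and from \eqref{pr25}, $|I(i)|\geq N^{\delta/2}$. Since $\log_{\beta}N=o(N^{\delta/2})$, the logarithmic correction is negligible, so for $N$ large
\[|J_i|\;\geq\;\frac{|I(i)|}{1+C_8}-4C_{13}\log_{\beta}N\;\geq\;\frac{|I(i)|}{2(1+C_8)}.\]

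Next I would exploit Lemma \ref{prlem9} to count elements of $\Omega$ inside $J_i$. Set $g:=\lceil 2C_{13}\log_{\beta}N\rceil+1$ and partition $J_i$ into $\lfloor |J_i|/g\rfloor$ consecutive integer subintervals of length $g$. For each such subinterval $[a,b)\subseteq J_i$, apply Lemma \ref{prlem9} to $R=b-1\in J_i$: it produces an element of $\Omega$ in $[b-1-2C_{13}\log_{\beta}N,\,b-1)\subseteq[a,b)$. Because the subintervals are pairwise disjoint, the witnesses are pairwise distinct, so
\[\operatorname{Card}(\Omega\cap I(i))\;\geq\;\left\lfloor\frac{|J_i|}{g}\right\rfloor\;\gg\;\frac{|I(i)|}{\log N}.\]
Since the intervals $I(i)$ are pairwise disjoint subsets of $[0,N)$, summing over $I(i)\in\mathcal{I}_2$ and invoking Lemma \ref{prlem7} yields
\[y_N\;=\;\operatorname{Card}(\Omega\cap[0,N))\;\geq\;\sum_{I(i)\in\mathcal{I}_2}\frac{c\,|I(i)|}{\log N}\;\geq\;\frac{c}{12}\cdot\frac{N}{\log N}\]
for an absolute $c>0$ and all sufficiently large $N\in\Xi$. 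As $\Xi$ is infinite, this gives $\limsup_{N\to\infty}y_N/\log N=\infty$, which is in particular positive.

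The main technical point, and the only place requiring genuine care, is the counting argument: one must verify that the partitioning into subintervals of length $g$ produces honest-to-goodness distinct witnesses in $\Omega$, and that the bound $|I(i)|\geq N^{\delta/2}$ from \eqref{pr25} is what absorbs both the additive $4C_{13}\log_{\beta}N$ cut at the left end of $J_i$ and the multiplicative loss of a factor $\log N$. Everything else amounts to bookkeeping on top of Lemmas \ref{prlem7} and \ref{prlem9}.
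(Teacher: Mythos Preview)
Your proof is correct and follows essentially the same approach as the paper: use Lemma~\ref{prlem9} to show that $\Omega$ has density $\gg 1/\log N$ on each $I(i)\in\mathcal{I}_2$, then sum via Lemma~\ref{prlem7} to obtain $y_N\gg N/\log N$ along $N\in\Xi$. The paper merely asserts the per-interval bound $y_N(i)\geq C_{17}|I(i)|/\log N$ directly from \eqref{pr26}, \eqref{eqn:bcd}, and Lemma~\ref{prlem9}, whereas you spell out the partitioning of $J_i$ into blocks of length $\lceil 2C_{13}\log_\beta N\rceil+1$ that makes this explicit.
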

\begin{proof}
Let \(N\in \Xi\) be sufficiently large and let \(1\leq i\leq \mu\) with \(I(i)\in\mathcal{I}_2\). 
Note that %Combining (\ref{pr26}), 
\begin{align}
\lim_{N\to\infty}\frac{|I(i)|}{\log_{\beta}N}=\infty
\label{eqn:bcd}
\end{align}
by (\ref{pr25}). Combining (\ref{pr26}), (\ref{eqn:bcd}), and Lemma \ref{prlem9}, we see that there exists a constant \(C_{17}\) such that 
\[y_N(i)\geq C_{17}\frac{|I(i)|}{\log N}. \]
Therefore, using (\ref{pr21}) and (\ref{pr22}), we obtain 
\[y_N\geq \sum_{1\leq i\leq \mu\atop I(i)\in \mathcal{I}_2} y_N(i)\geq 
\sum_{I\in \mathcal{I}_2}C_{17}\frac{|I|}{\log N}\gg \frac{N}{\log N}.\]
\end{proof}
Finally, we deduce a contradiction from Lemma \ref{prlem5} and \ref{prlem10}, which proves 
Theorem \ref{thm:cri1}. 
\section*{Acknowledgements}
This work was supported by JSPS KAKENHI Grant Number 15K17505.

Hajime Kaneko\\
Institute of Mathematics, University of Tsukuba, 1-1-1\\
 Tennodai, Tsukuba, Ibaraki, 350-0006, JAPAN\\
Center for Integrated Research 
in Fundamental Science and Technology (CiRfSE) 
University of Tsukuba, 
Tsukuba, Ibaraki, 305-8571, JAPAN\\
e-mail: kanekoha@math.tsukuba.ac.jp

\begin{thebibliography}{99}
\bibitem{ada1}
B. Adamczewski, 
Transcendance \(\ll\) \`{a} la Liouville \(\gg\) de certains nombres r\'{e}els, 
C. R. Acad. Sci. Paris {\bf 338} (2004), 511--514. 
\bibitem{ada2}
B. Adamczewski and C. Faverjon, 
Chiffres non nuls dans le d\'{e}veloppement en base enti\`{e}re des nombres alg\'{e}briques irrationnels, 
C. R. Acad. Sci. Paris, {\bf 350} (2012), 1--4.
\bibitem{bai}
D. H. Bailey, J. M. Borwein, R. E. Crandall and C. Pomerance, 
On the binary expansions of algebraic numbers, 
J. Th\'{e}or. Nombres Bordeaux {\bf 16} (2004), 487--518.
\bibitem{ber}
D. Bertrand. Theta functions and transcendence, The Ramanujan J. 
{\bf 1} (1997), 339--350. 
\bibitem{bor}
\'{E}. Borel, Sur les chiffres d\'{e}cimaux de
\(\sqrt{2}\) et divers probl\`{e}mes de probabilit\'{e}s en 
cha\^{i}ne, C. R. Acad. Sci. Paris 
{\bf 230} (1950), 591--593.
\bibitem{bug1}
Y. Bugeaud, 
Distribution modulo one and diophantine approximation, 
Cambridge Tracts in Math. {\bf 193}, Cambridge, (2012). 
\bibitem{bug2}
Y. Bugeaud, 
On the \(b\)-ary expansion of an algebraic number, 
Rend. Sem. Math. Univ. Padova {\bf 118} (2007), 217--233. 
\bibitem{cor}
P. Corvaja and U. Zannier, 
Some new applications of the subspace theorem, 
Compositio Math. {\bf 131} (2002), 319--340.
\bibitem{dan}
S. Daniel, 
On gaps between numbers that are sums of three cubes, 
Mathematika {\bf 44} (1997), 1--13.
\bibitem{dur}
A. Durand, Ind\'{e}pendance alg\'{e}brique de nombres complexes et 
crit\`{e}re de transcendance, Compositio Math. {\bf 35} (1977), 
259--267.
\bibitem{duv}
D. Duverney, Ke. Nishioka, Ku. Nishioka, and I. Shiokawa. 
Transcendence of Jacobi's theta series, 
Proc. Japan. Acad. Sci, Ser. A {bf 72} (1996), 202--203.
\bibitem{kan1}
H. Kaneko, 
Algebraic independence of real numbers with low
density of nonzero digits, Acta Arith {\bf 154} (2012), 325--351.
\bibitem{kan2}
H. Kaneko, On the beta-expansions of 1 and algebraic numbers for a Salem number beta, 
Ergod. Theory and Dynamical Syst. {\bf 35} (2015), 1243--1262.
\bibitem{kan3}
H. Kaneko, On the number of nonzero digits in the beta-expansions of algebraic numbers, 
to appear in Rend. Sem. Math. Univ. Padova. 
\bibitem{nis}
K. Nishioka, 
Algebraic independence by Mahler's method and \(S\)-unit equations, 
Compositio Math. {\bf 92} (1994), 87--110.
\bibitem{ren}
A. R\'{e}nyi, Representations for real numbers and their ergodic properties, Acta Math.
Acad. Sci. Hung. {\bf 8} (1957), 477–-493.
\bibitem{shi}
I. Shiokawa, Algebraic independence of certain gap series, 
Arch. Math. {\bf 38} (1982), 438--442.
\bibitem{tan}
T. Tanaka, 
Algebraic independence of power series generated by linearly independent positive numbers, 
Results Math. {\bf 46}(2004), 367--380.
\end{thebibliography}
\end{document}